\newtheorem{thm}{Theorem}
\newtheorem{lem}[thm]{Lemma}
\newtheorem{prop}[thm]{Proposition}
\newtheorem{corol}[thm]{Corollary}
\theoremstyle{definition}
\newtheorem{defin}[thm]{Definition}
\newtheorem{exm}[thm]{Example}
\newtheorem{rem}[thm]{Remark}
\newtheorem{conj}[thm]{Conjecture}
\newcommand{\pv}{{\rm p.v.}}
\newcommand{\rmi}{\mathrm{i}}
\newcommand{\C}{\mathbb{C}}
\newcommand{\R}{\mathbb{R}}
\newcommand{\T}{\mathbb{T}}
\newcommand{\D}{\mathbb{D}}
\newcommand{\N}{\mathbb{N}}
\newcommand{\Z}{\mathbb{Z}}
\newcommand{\cA}{\mathcal{A}}
\newcommand{\cH}{\mathcal{H}}
\newcommand{\cD}{\mathcal{D}}
\newcommand\ps[2]{\langle #1, #2 \rangle}
\DeclareMathOperator{\dom}{dom}
\DeclareMathOperator{\ran}{ran}
\title[]{A variational formulation for Dirac operators in bounded domains. Applications to spectral geometric inequalities.}
\author{Pedro R.~S.~Antunes}
\address{(P.~R.~S.~ Antunes) Sec\c{c}\~{a}o de Matem\'atica, Departamento de Ci\^{e}ncias e Tecnologia, Universidade Aberta, Pal\'acio Ceia, 1269-001 Lisbon, Portugal, and Grupo de F\'{i}sica Matema\'{a}tica, Faculdade de Ci\^{e}ncias, Universidade de Lisboa, Campo Grande, Edif\'{i}cio C6, P-1749-016 Lisboa, Portugal}
\email{prantunes@fc.ul.pt}
\urladdr{http://webpages.ciencias.ulisboa.pt/~prantunes/}
\author{Rafael D.~Benguria}
\address{(R.~D.~Benguria) Instituto de F\'isica, Pontificia Universidad Cat\'olica de Chile, Avda. Vicu\~{n}a Mackenna 4860, Santiago, Chile.}
\email{rbenguri@fis.puc.cl}
\urladdr{http://www.fis.puc.cl/~rbenguri/}
\author{Vladimir Lotoreichik}
\address{(V.~Lotoreichik) Department of Theoretical Physics, Nuclear Physics Institute, Czech Academy of Sciences, 25068 \v{R}e\v{z}, Czech Republic.}
\email{lotoreichik@ujf.cas.cz}
\urladdr{http://gemma.ujf.cas.cz/~lotoreichik/}
\author{Thomas Ourmi\`eres-Bonafos}
\address{(T.~Ourmi\`eres-Bonafos) Aix Marseille Univ, CNRS, Centrale Marseille, I2M, Marseille, France}
\email{thomas.ourmieres-bonafos@univ-amu.fr}
\urladdr{http://www.i2m.univ-amu.fr/perso/thomas.ourmieres-bonafos/}
\begin{document}

\keywords{}

\begin{abstract} We investigate spectral features of the Dirac operator with infinite mass boundary conditions in a smooth bounded domain of $\mathbb{R}^2$. Motivated by spectral geometric inequalities, we prove a non-linear variational formulation to characterize its principal  eigenvalue. This characterization turns out to be very robust and allows for a simple proof of a Szeg\"o type inequality as well as a new reformulation of a Faber-Krahn type inequality for this operator. The paper is complemented with strong numerical evidences supporting the existence of a Faber-Krahn type inequality.
\end{abstract}                                                            

\maketitle
\tableofcontents
\section{Introduction}
\subsection{Motivations and state of the art}
In the past few years there has been a growing interest in the study of Dirac operators among the mathematical physics community; the main reason being that low-energy electrons in a single-layered sheet of graphene are driven by an effective hamiltonian being a two-dimensional massless Dirac operator.

Various mathematical studies have been undertaken, starting with a rigorous mathematical derivation of such hamiltonians, see \emph{e.g.} \cite{FW12} for the effective hamiltonian derivation or \cite{ALTMR,BCLTS,MOBP,SV18} for the justification of the so-called infinite mass boundary conditions. Many properties of such operators have been investigated as their self-adjointness in bounded domains with specified boundary conditions or coupled with the so-called $\delta$-interactions, see \cite{BHPOB,BFVdBS17}. Let us also mention recent works on spectral properties and asymptotics of Dirac-type operators in specific asymptotic regimes (see \cite{ALTR,HOBP}).

In this work, we are interested in finding geometrical bounds on the eigenvalues of one of the simplest Dirac operator relevant in physics: the two-dimensional massless Dirac operator with infinite mass boundary conditions.

To set the stage, let $\Omega \subset \R^2$ be a $C^\infty$ simply connected domain and let $n = (n_1,n_2)^\top$ be the outward pointing normal field on $\partial\Omega$.  The Dirac operator with infinite mass boundary conditions in $L^2(\Omega,\C^2)$ is defined as
\begin{multline*}
	D^\Omega  := \begin{pmatrix}
					0 & -2\rmi\partial_z\\
					-2\rmi\partial_{\bar z} & 0
				\end{pmatrix},\\\dom(D^\Omega) := \{ u = (u_1,u_2)^\top \in H^1(\Omega,\C^2) : u_2 = \rmi {\bf n}u_1 \text{ on }\partial\Omega \},
\end{multline*}
where we have set ${\bf n} := n_1 + \rmi n_2$ and with the Wirtinger operators defined as usual by
\[
	\partial_z = \frac12(\partial_1 - \rmi \partial_2),\quad \partial_{\bar{z}} = \frac12(\partial_1 + \rmi \partial_2).
\]

The Dirac operator with infinite mass boundary conditions $D^\Omega$ is known to be self-adjoint (see \cite[Thm. 1.1.]{BFVdBS17}), moreover its spectrum is symmetric with respect to the origin and constituted of eigenvalues of finite multiplicity satisfying
\[
	\cdots \leq -E_k(\Omega) \leq\cdots \leq-E_{1}(\Omega) < 0 < E_{1}(\Omega) \leq \cdots \leq E_k(\Omega) \leq \cdots.
\]
In the recent paper \cite{BFVdBS17b}, the following geometrical lower bound is obtained
\begin{equation}\label{eqn:lbBFSVdB}
	E_1(\Omega) \geq \sqrt{\frac{2\pi}{|\Omega|}},
\end{equation}
where $|\Omega|$ denotes the area of the domain $\Omega$. However, this lower bound is never attained among Euclidean domains and by analogy with the famous Faber-Krahn inequality \cite{F23,K25}, a natural conjecture for the optimal lower-bound is the following.
\begin{conj}\label{conj:FK} There holds
\[
	E_1(\Omega) \geq \sqrt{\frac{\pi}{|\Omega|}} E_1(\D),
\]
where $\D$ is the unit disk. There is equality in the above inequality if and only if $\Omega$ is a disk.
\end{conj}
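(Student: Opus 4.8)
\textbf{A strategy towards Conjecture~\ref{conj:FK}.} Since $D^{t\Omega}$ is unitarily equivalent to $t^{-1}D^{\Omega}$ via the dilation $x\mapsto tx$, one has $E_1(t\Omega)=t^{-1}E_1(\Omega)$, so the exponent $\tfrac12$ in the conjecture is dictated by scaling and it is enough to fix the normalization $|\Omega|=|\D|=\pi$ and to prove $E_1(\Omega)\ge E_1(\D)$ with equality only for disks. The plan is then to run the Faber-Krahn scheme on the non-linear variational characterization of $E_1(\Omega)$ obtained in this paper: that characterization realizes $E_1(\Omega)$ as the infimum over scalar fields $v\in H^1(\Omega,\C)$ of a functional $\Lambda_\Omega(v)$ whose geometric ingredients are only the bulk quantities $\int_\Omega|\nabla v|^2$ and $\int_\Omega|v|^2$ together with the boundary quantities $\int_{\partial\Omega}|v|^2\,\dd s$ and $\int_{\partial\Omega}\kappa|v|^2\,\dd s$, where $\kappa$ denotes the curvature of $\partial\Omega$.

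First I would pick a minimizer $v$ of $\Lambda_\Omega$, whose existence should follow from the compactness built into the formulation, and reduce to nonnegative $v$ by checking that $\Lambda_\Omega(|v|)\le\Lambda_\Omega(v)$ (the phase enters only through the Dirichlet energy and through a term which disappears once moduli are taken). Next, apply Schwarz symmetrization $v\mapsto v^{*}$ to obtain a radially symmetric competitor on $\D$. The bulk terms cooperate: $\int|v|^2$ is preserved while $\int|\nabla v|^2$ does not increase by the P\'olya-Szeg\"o inequality, which is the favorable direction for a Rayleigh-type infimum. One would then have to show that the whole functional cannot increase, i.e. that replacing $\Omega$ by $\D$ cannot lower $E_1$.

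The genuine obstacle is the curvature boundary term $\int_{\partial\Omega}\kappa|v|^2\,\dd s$. Interior symmetrization carries no information on boundary traces, and even for test functions adapted to $\partial\Omega$ this pairing is \emph{not} monotone under rearrangement: by Gauss-Bonnet $\int_{\partial\Omega}\kappa\,\dd s=2\pi$ for \emph{every} smooth simply connected $\Omega$, so the total curvature does not record the isoperimetric deficit, and a domain far from a disk may have its curvature concentrated on an arbitrarily short arc. A correct argument therefore seems to require a \emph{weighted} rearrangement controlling at the same time the bulk quotient and the curvature-weighted boundary mass, a tool which is not available at present; this is exactly where the scheme stalls, and why we turn to a numerical study below. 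The two ways I would try to circumvent it are: (i) transplanting radial test functions through a Riemann map $\psi\colon\D\to\Omega$, which recasts $\kappa$ on $\partial\Omega$ as an expression in $|\psi'|$ and its tangential derivative on $\partial\D$ and reduces the conjecture to a one-dimensional inequality for the boundary conformal factor, delicate because the Dirichlet term is only partially conformally covariant; and (ii) proving that $\D$ is a strict local minimizer by computing the shape derivative of $\Omega\mapsto E_1(\Omega)$, verifying that its vanishing characterizes the disk and that the second variation is nonnegative, and then globalizing through a concentration-compactness or a quantitative-stability argument.
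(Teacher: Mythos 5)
You should first note that the statement you are attempting to prove is posed as a \emph{conjecture} in the paper and is explicitly left open; there is no proof to compare against. What the paper supplies is numerical evidence (Section~\ref{sec:numerics}), an equivalent reformulation as a Faber-Krahn inequality for the whole curve $E\mapsto\mu^\Omega(E)$ (Conjecture~\ref{conj:2} in Section~\ref{par:newconj}), and the one-sided implication that Conjecture~\ref{conj:FK} would yield the Bossel-Daners inequality for the Robin Laplacian (Section~\ref{par:BosselDaners}). Your honest admission that the symmetrization scheme stalls is therefore consistent with the status of the problem; the issue is that you have misdiagnosed \emph{where} it stalls in the present framework.

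Your scaling reduction is fine and the non-linear characterization $\mu^\Omega(E)=0$ is the right entry point, but your description of the functional is inaccurate: the quadratic form is $q_E^\Omega(u) = 4\int_\Omega|\partial_{\bar z}u|^2\,\dd x - E^2\int_\Omega|u|^2\,\dd x + E\int_{\partial\Omega}|u|^2\,\dd s$, with no curvature-weighted boundary term $\int_{\partial\Omega}\kappa|u|^2\,\dd s$. Such a term arises in a \emph{different} variational principle, $E_1(\Omega)^2=\inf\|D^\Omega u\|^2/\|u\|^2$, via a Schr\"odinger--Lichnerowicz-type integration by parts; that is the route of \cite{LOB}, not of this paper. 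In the present formulation the genuine obstruction is identified by the paper at the end of Section~\ref{sec:numerics}: the optimizer of $q_E^\Omega$ is generically complex-valued with non-constant argument (Figure~\ref{fig:figure5}), and for complex $u$ one only has
\[
4\int_\Omega|\partial_{\bar z}u|^2\,\dd x = \int_\Omega|\nabla u|^2\,\dd x - \int_{\partial\Omega}\mathrm{Im}\big(\overline{u}\,\partial_t u\big)\,\dd s,
\]
so that $\mu^\Omega(E)\le\lambda_{\rm Rob}^\Omega(E)-E^2$ with no control in the reverse direction. Passing to $|v|$ kills the boundary Jacobian contribution and restricts $\mu^\Omega(E)$ to the Robin Rayleigh quotient, which is exactly how the paper proves Conjecture~\ref{conj:FK}~$\Rightarrow$~Bossel--Daners, but this reduction points the \emph{wrong way} for proving Conjecture~\ref{conj:FK} itself. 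Thus your step ``reduce to nonnegative $v$ by checking $\Lambda_\Omega(|v|)\le\Lambda_\Omega(v)$'' is where the argument actually breaks, before any rearrangement considerations; the curvature and Gauss--Bonnet discussion, while correct in spirit for the $\|D^\Omega u\|^2$-quotient, is not the relevant obstacle here. Finally, your fallback (i), transplanting a radial profile through a Riemann map, is precisely what the paper does, but only to obtain the \emph{upper} bound of Theorem~\ref{thm:ineq}; fallback (ii) (shape derivative and quantitative stability) remains an interesting open direction.
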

\begin{rem} As explained in \cite[Remark 2]{BFVdBS17b} (see also \cite[Appendix]{LOB}), the eigenstructure of the unit disk is explicit. Indeed, $E_1(\D) \simeq 1.435\dots$ is the first non-negative root of the equation $J_0(E) = J_1(E)$ where $J_0$ and $J_1$ are the Bessel functions of the first kind of order $0$ and of order $1$, respectively. Moreover, an associated eigenfunction is given for $(x_1,x_2)\in \D$ by
\[
	\begin{pmatrix}J_0(|x|)\\ \rmi\frac{x_1 + \rmi x_2}{|x|}J_1(|x|)\end{pmatrix}.
\]
\label{rem:fundisk}
\end{rem}
Conjecture \ref{conj:FK} motivated part of this paper and is still an open question. However, in Section \ref{sec:numerics} we provide strong numerical evidences supporting it and in Section \ref{sec:aboutFK} we show how Conjecture \ref{conj:FK} is intimately connected to the famous Bossel-Daners inequality for the Robin Laplacian (see \cite{Boss86,Dan06}).

The quest for a geometrical upper-bound has also attracted attention recently as for instance in \cite{LOB}. In this work, the given geometrical upper-bound is sharp in the sense that it is an equality if and only if the considered domain is a disk. Nevertheless, this upper-bound depends in a complicated fashion of different geometrical parameters and may be hard to compute in practice.

Let us also mention that similar questions are dealt with in the differential geometry literature for lower bounds and upper bounds for Dirac operators on spin-manifolds (see for instance \cite{AF99,B92,B98,R06}).

One of the main result of this paper is the following theorem which gives a geometrical upper-bound in term of simple geometric quantities: $|\Omega|$ the area of $\Omega$, $|\partial\Omega|$ the perimeter of $\Omega$ as well as $r_i$ the inradius of $\Omega$.

\begin{thm} Let $\Omega \subset \R^2$ be a $C^\infty$ simply connected domain. There holds
\[
	E_1(\Omega) \leq \frac{|\partial\Omega|}{(\pi r_i^2 + |\Omega|)}E_1(\D),
\]
with equality if and only if $\Omega$ is a disk.
\label{thm:ineq}
\end{thm}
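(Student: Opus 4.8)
The plan is to combine the non-linear variational characterization of $E_1(\Omega)$ (announced in the abstract and presumably established earlier in the paper, say as a min–max over suitable test functions with a quadratic-fractional Rayleigh quotient involving both a bulk $L^2$ term and a boundary $L^2$ term) with a carefully chosen radial-type trial function adapted to the inradius of $\Omega$. Concretely, let $B$ be an inball of $\Omega$, so $B$ has radius $r_i$ and $B \subset \Omega$. I would take the eigenfunction of the disk given explicitly in Remark \ref{rem:fundisk}, rescaled to radius $r_i$ and centered at the incenter, namely (up to normalization) $u(x) = \bigl(J_0(E_1(\D)|x|/r_i),\ \rmi\tfrac{x_1+\rmi x_2}{|x|} J_1(E_1(\D)|x|/r_i)\bigr)^\top$ on $B$, and extend it by zero (or by a suitable constant-modulus phase on $\partial\Omega$) to all of $\Omega$. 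The point of using the disk eigenfunction is that on $\partial B$ it already satisfies the infinite-mass boundary relation, which should make the boundary contribution in the variational quotient computable in closed form and proportional to $|\partial B| = 2\pi r_i$.

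Next I would evaluate the variational functional on this trial state. The bulk term will split as an integral over $B$ (where $u$ solves the Dirac equation for $\D$ rescaled, hence contributes something proportional to $E_1(\D)/r_i$ times the squared $L^2$-mass on $B$) plus possibly a vanishing contribution from $\Omega \setminus B$ if the extension is chosen to be annihilated by the relevant operator there. The boundary term, living on $\partial\Omega$, is where the perimeter $|\partial\Omega|$ enters: with the phase on the extended function chosen to match $\rmi{\bf n}u_1$ on $\partial\Omega$, the boundary integrand has constant modulus, producing a factor $|\partial\Omega|$. Collecting, the Rayleigh quotient becomes (a constant times) $|\partial\Omega|/(\text{mass term})$, and the mass term is where $\pi r_i^2 + |\Omega|$ should appear — the $\pi r_i^2$ piece being the normalized $L^2$-mass of the disk eigenfunction on $B$ and the $|\Omega|$ piece coming from the constant-modulus extension over all of $\Omega$. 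Matching constants against the known identity $J_0(E_1(\D)) = J_1(E_1(\D))$ and the explicit value of $E_1(\D)$ gives exactly the claimed bound $E_1(\Omega) \le \frac{|\partial\Omega|}{\pi r_i^2 + |\Omega|} E_1(\D)$.

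For the equality case I would argue as follows. Equality forces the trial function to be an actual minimizer, hence (a scalar multiple of) the genuine first eigenfunction of $D^\Omega$; in particular it must solve the Dirac equation on all of $\Omega$, not merely on $B$, which is incompatible with the zero/constant extension unless $\Omega \setminus B$ has no interior, i.e. $B = \Omega$ and $\Omega$ is a disk. More carefully, equality in the variational principle is saturated only by eigenfunctions, and equality in the geometric estimates used to pass from $B$ to $\Omega$ (the monotonicity $|\partial B|\le$ something, or the mass comparison) should force $|\Omega| = \pi r_i^2$, which already characterizes disks. The converse — that a disk gives equality — is a direct computation from Remark \ref{rem:fundisk}. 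The main obstacle I anticipate is the bookkeeping in the boundary term: ensuring that the extension of the disk eigenfunction to $\Omega \setminus B$ can be chosen to (i) lie in the form domain, (ii) satisfy the infinite-mass boundary condition on $\partial\Omega$ with unit-modulus first component, and (iii) contribute nothing harmful to the bulk term — this compatibility of a radial profile with an arbitrary smooth boundary is the delicate step, and it is presumably where the robustness of the non-linear variational formulation (as emphasized in the abstract) does the real work.
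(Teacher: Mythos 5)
Your proposal takes a genuinely different route from the paper. The paper first establishes the sharper quadratic bound of Theorem \ref{thm:isoper} via a Szeg\H{o}-type conformal transplantation: it picks a Riemann map $f:\D\to\Omega$, transplants the \emph{scalar} first component $u_0(x)=J_0(E_1(\D)|x|)$ of the disk eigenfunction to $v_0 = u_0\circ f^{-1}$, plugs $v_0$ into $q_E^\Omega$, and controls the resulting moment series via Chebyshev's inequality, Koebe's estimate $|c_1|\ge r_i$, and the area formula; Theorem \ref{thm:ineq} is then deduced from Theorem \ref{thm:isoper} by the isoperimetric inequality together with $\pi r_i^2\le|\Omega|$. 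Your inball construction avoids the Riemann map entirely, and it \emph{can} in fact be made to work and is arguably more elementary: take the scalar trial function $u(x)=J_0\bigl(E_1(\D)|x-x_0|/r_i\bigr)$ on the inball $B$, extended by the constant $J_0(E_1(\D))$ on $\Omega\setminus B$. Since constants are annihilated by $\partial_{\bar z}$, the bulk term lives only on $B$ and equals $\|\nabla u_0\|^2_{L^2(\D)}$ by scaling; the boundary term is $J_0(E_1(\D))^2|\partial\Omega|$ since $u$ is constant on $\partial\Omega$; and, using the identity $\int_0^1 J_0(E_1(\D)r)^2 r\,dr = J_0(E_1(\D))^2$ (a consequence of $J_0(E_1(\D))=J_1(E_1(\D))$), one finds $\int_\Omega|u|^2 = J_0(E_1(\D))^2(\pi r_i^2+|\Omega|)$. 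This reproduces the same polynomial $P(E)$ appearing in the paper's proof of Theorem \ref{thm:isoper}, with no moment estimates needed.

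However, as written the proposal has concrete gaps. First, Theorem \ref{thm:vf} is a \emph{scalar} variational principle over $\dom(q_E^\Omega)$: you should feed it only the scalar first component, the boundary condition plays no role in the form domain, and your remarks about matching phases $\rmi\mathbf{n}u_1$ on $\partial\Omega$ and about $\partial B$ contributing $2\pi r_i$ are red herrings. Second, the extension by zero, which you offer as an alternative, does not work --- it kills both the boundary contribution and the $|\Omega|$ piece; only the constant extension gives the correct counts. Third, and most seriously, this trial function produces the bound $\mu^\Omega(E)\le P(E)/(\pi r_i^2+|\Omega|)$ with $P(E)= -E^2(\pi r_i^2+|\Omega|)+E|\partial\Omega|+2\pi E_1(\D)(E_1(\D)-1)$, which is Theorem \ref{thm:isoper}, not the linear bound you claim to read off directly. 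To get Theorem \ref{thm:ineq} one must check $P(E_*)\le 0$ at $E_*=|\partial\Omega|E_1(\D)/(\pi r_i^2+|\Omega|)$, which reduces to $|\partial\Omega|^2\ge 2\pi(\pi r_i^2+|\Omega|)$ and requires the isoperimetric inequality $|\partial\Omega|^2\ge 4\pi|\Omega|$ together with $\pi r_i^2\le|\Omega|$ --- a step your proposal never mentions. This isoperimetric step is also where the equality case is actually closed; your argument that equality forces $B=\Omega$ is suggestive but insufficient, since one must trace equality through the isoperimetric inequality and the estimate $\pi r_i^2\le|\Omega|$ as well.
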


The proof is by combining a new variational characterization of $E_1(\Omega)$, inspired by min-max techniques for operators with gaps introduced in \cite{DES00} and the classical proof of Szeg\"o about the eigenvalues of membranes of fixed area \cite{Sze54}.

It turns out this new variational characterization is of interest by itself because it also allows for numerical simulations and we believe that it could be an adequate starting point to prove Conjecture \ref{conj:FK} as discussed further on in Section \ref{sec:aboutFK}. To introduce it, consider the quadratic form
\begin{equation}
	q_{E,0}^\Omega(u) := 4 \int_\Omega |\partial_{\bar z} u|^2 dx - E^2 \int_{\Omega}|u|^2dx + E \int_{\partial\Omega} |u|^2 ds, \quad \dom(q_{E,0}^\Omega) := C^\infty(\overline{\Omega},\C).
\label{eqn:deffq}
\end{equation}
For $E>0$, $q_{E,0}$ is bounded below with dense domain and we consider $q_E^\Omega$ the closure in $L^2(\Omega)$ of $q_{E,0}^\Omega$. Then, we define the first min-max level
\begin{equation}
	\mu^\Omega(E) := \inf_{u \in \dom(q_E^\Omega)\setminus\{0\}} \frac{4 \int_\Omega |\partial_{\bar z} u|^2 dx - E^2 \int_{\Omega}|u|^2dx + E \int_{\partial\Omega} |u|^2 ds}{\int_\Omega |u|^2 dx}.
\label{eqn:firstminmax}
\end{equation}
The second main result of this paper is the following non-linear variational characterization of $E_1(\Omega)$.
\begin{thm} $E>0$ is the first non-negative eigenvalue of $D^\Omega$ if and only if $\mu^\Omega(E) = 0$.
\label{thm:vf}
\end{thm}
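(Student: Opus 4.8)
The plan is to relate the min-max level $\mu^\Omega(E)$ to the square of the Dirac operator. Observe that $(D^\Omega)^2$ acts diagonally as $4\partial_z\partial_{\bar z} = -\Delta$ on each component, but the boundary condition couples the two components. A standard computation (integration by parts, using the Wirtinger calculus identity $4\partial_z\partial_{\bar z}=\Delta$ and the relation $u_2 = \rmi{\bf n}u_1$ on $\partial\Omega$) shows that the quadratic form of $(D^\Omega)^2$ restricted to the first component $u_1$ of an eigenfunction equals precisely $4\int_\Omega|\partial_{\bar z}u_1|^2\dd x + E\int_{\partial\Omega}|u_1|^2\dd s$ when $E$ is the relevant eigenvalue. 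Thus the sesquilinear form $q_{E,0}^\Omega$ is morally "$(D^\Omega)^2 - E^2$ read on first components", and its nonnegativity with a nontrivial kernel should correspond to $E^2$ being the bottom of the spectrum of $(D^\Omega)^2$, i.e. $E = E_1(\Omega)$.

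The key steps, in order, are as follows. \textbf{Step 1.} Show that if $u = (u_1,u_2)^\top \in \dom(D^\Omega)$ satisfies $D^\Omega u = E u$ with $E = E_1(\Omega)>0$, then $-2\rmi\partial_{\bar z}u_1 = Eu_2$ and $-2\rmi\partial_z u_2 = Eu_1$; deduce that $u_1$ is a smooth function on $\overline\Omega$ (elliptic regularity, using $\Omega$ smooth), hence $u_1\in\dom(q_E^\Omega)$, and compute $q_{E,0}^\Omega(u_1)$. Integrating $4\int_\Omega|\partial_{\bar z}u_1|^2 = -4\int_\Omega \overline{u_1}\,\partial_z\partial_{\bar z}u_1 + 2\rmi\int_{\partial\Omega}\overline{u_1}\,{\bf n}\,\partial_{\bar z}u_1\,\dd s$ (Green/Stokes in complex form) and substituting the eigenvalue equations, the bulk term becomes $E^2\int_\Omega|u_1|^2$ and the boundary term, after using $u_2=\rmi{\bf n}u_1$ and $-2\rmi\partial_{\bar z}u_1=Eu_2$, becomes $-E\int_{\partial\Omega}|u_1|^2$. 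Hence $q_{E,0}^\Omega(u_1)=0$, so $\mu^\Omega(E)\le 0$. \textbf{Step 2.} Conversely, prove $\mu^\Omega(E)\ge 0$ for $E=E_1(\Omega)$: given any $v\in\dom(q_E^\Omega)$, build a test spinor $w=(v,\tfrac{2}{E}(-\rmi\partial_{\bar z}v))^\top$ (defined so that the first row of $D^\Omega w = Ew$ is automatic) and check $w\in\dom(D^\Omega)$, i.e. verify the boundary condition $w_2=\rmi{\bf n}w_1$; this is where the boundary term in $q_E^\Omega$ and the closure come in, and a density argument reduces to $v$ smooth. Then $\langle (D^\Omega)^2 w,w\rangle = \|D^\Omega w\|^2 \ge E_1(\Omega)^2\|w\|^2$, and unwinding this inequality through the same integration by parts yields exactly $q_E^\Omega(v) \ge E^2\int_\Omega|v|^2 - E_1(\Omega)^2(\cdots)$; with $E=E_1(\Omega)$ one extracts $q_E^\Omega(v)\ge 0$. \textbf{Step 3.} For the "only if" direction combine Steps 1–2: $\mu^\Omega(E_1(\Omega))=0$. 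For the "if" direction, suppose $\mu^\Omega(E)=0$ for some $E>0$; the infimum is attained (the form is closed, bounded below, with compact resolvent since $\Omega$ is bounded), producing a nonzero minimizer $v$ with Euler--Lagrange equation $-\Delta v = E^2 v$ in $\Omega$ and Robin-type boundary condition $2\partial_\nu$-term $= -E v$ on $\partial\Omega$ (in complex form, $2\rmi{\bf n}\partial_{\bar z}v = -Ev$ on $\partial\Omega$); set $w=(v,\tfrac2E(-\rmi\partial_{\bar z}v))^\top$ and verify, using exactly this boundary condition, that $w\in\dom(D^\Omega)$ and $D^\Omega w = Ew$. Hence $E$ is an eigenvalue of $D^\Omega$, so $E\ge E_1(\Omega)$; and $\mu^\Omega$ is monotone enough (the bulk term $-E^2\int|u|^2$ dominates for the disk-type comparison, or one invokes that $\mu^\Omega(E_1(\Omega))=0$ together with strict monotonicity of $E\mapsto\mu^\Omega(E)$ at the relevant point) to force $E=E_1(\Omega)$.

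The main obstacle I expect is \textbf{Step 2} together with the monotonicity needed at the end of \textbf{Step 3}: passing from a general element of the closed form domain $\dom(q_E^\Omega)$ to an admissible spinor in $\dom(D^\Omega)$ requires controlling the boundary trace of $\partial_{\bar z}v$, which is not obviously in $L^2(\partial\Omega)$ for a mere form-domain element, so one must argue by density from $C^\infty(\overline\Omega)$ and check the relevant quantities pass to the limit; and one must rule out that $\mu^\Omega(E)=0$ for some $E\neq E_1(\Omega)$, which requires showing $E\mapsto\mu^\Omega(E)$ vanishes at exactly one point — plausibly because for $E$ slightly below $E_1(\Omega)$ the negative term $-E^2\int|u|^2$ is not yet ``strong enough'' (one has $\mu^\Omega(E)>0$), while the spinor construction shows $\mu^\Omega(E)\le 0$ cannot happen for $0<E<E_1(\Omega)$ since that would manufacture an eigenvalue below $E_1(\Omega)$. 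Making this dichotomy rigorous — i.e. that $\mu^\Omega(E)>0$ for $0<E<E_1(\Omega)$ — is the delicate quantitative point and will likely use that any $v$ achieving $q_E^\Omega(v)\le 0$ yields, via the construction in Step 2, a genuine element of $\dom((D^\Omega)^2)$ with Rayleigh quotient $\le E^2 < E_1(\Omega)^2$, a contradiction.
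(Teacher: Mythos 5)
Your Step~1 is correct and matches the paper's Proposition~\ref{prop:sens2}: inserting the first component of an eigenspinor of $D^\Omega$ into $q_{E}^\Omega$ and integrating by parts does give $q_{E}^\Omega(u_1)=0$, hence $\mu^\Omega(E_1(\Omega))\le 0$. Your ``if'' direction in Step~3 (minimizer $\Rightarrow$ Euler--Lagrange $\Rightarrow$ spinor in $\dom(D^\Omega)$) matches Proposition~\ref{prop:sens1}.

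The genuine gap is in Step~2 and in the dichotomy argument at the end of Step~3. In Step~2 you want to conclude $\mu^\Omega(E_1(\Omega))\ge 0$ by taking an \emph{arbitrary} $v\in\dom(q_E^\Omega)$, forming $w=(v,-\tfrac{2\rmi}{E}\partial_{\bar z}v)^\top$, and ``checking $w\in\dom(D^\Omega)$.'' This fails: the condition $w_2=\rmi\mathbf{n}w_1$ on $\partial\Omega$ is a nontrivial differential relation, $-\tfrac{2\rmi}{E}\partial_{\bar z}v=\rmi\mathbf{n}v$, which a generic smooth $v$ does not satisfy; it only appears as a natural boundary condition after imposing the Euler--Lagrange equation, i.e.\ for $v\in\dom(H_E^\Omega)$, not for all of $\dom(q_E^\Omega)$. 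Density does not help, since the relation is already violated for almost all smooth $v$. Similarly, in Step~3 you claim that any $v$ with $q_E^\Omega(v)\le 0$ manufactures an element of $\dom((D^\Omega)^2)$ with Rayleigh quotient $\le E^2$. Even for a minimizer with $\mu^\Omega(E)=\mu<0$ (which \emph{does} lie in $\dom(H_E^\Omega)$ and hence yields $w\in\dom(D^\Omega)$), the resulting $D^\Omega w$ equals $(\tfrac{E^2+\mu}{E}v, Ew_2)^\top$, not $Ew$, and the Rayleigh quotient $\|D^\Omega w\|^2/\|w\|^2$ involves the boundary term $\|v\|^2_{L^2(\partial\Omega)}$ and is not controlled by $E^2$. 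So the claimed contradiction is unsubstantiated.

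The paper circumvents both difficulties by replacing your Step~2 and the Step~3 dichotomy with a study of the scalar function $E\mapsto\mu^\Omega(E)$. It proves (Proposition~\ref{prop:monotonicity}) that (i) $\mu^\Omega$ is concave and continuous, (ii) $\mu^\Omega(0)=0$ and $\mu^\Omega(E)>0$ for small $E>0$, and (iii) a sharp monotonicity estimate across zeros, namely $\mu^\Omega(E_2)\le\tfrac{E_2}{E_1}\mu^\Omega(E_1)-E_2(E_2-E_1)$ for $0<E_1<E_2$, which forces the zero to be unique. Point~(ii) is the nontrivial quantitative input you were looking for: it uses that the auxiliary form $\mathfrak{Q}(u)=\|\partial_{\bar z}u\|_{L^2(\Omega)}^2+\|u\|_{L^2(\partial\Omega)}^2$ has a strictly positive bottom eigenvalue, because the only function annihilated by it is a holomorphic function with vanishing boundary trace, i.e.\ zero. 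With these properties in hand, the theorem follows from an intermediate-value argument combined with Propositions~\ref{prop:sens1} and~\ref{prop:sens2}: if $\mu^\Omega(E_1(\Omega))<0$ then the sign change between a small $E$ and $E_1(\Omega)$ produces a zero, hence (by sens1) an eigenvalue $<E_1(\Omega)$, a contradiction; and if $\mu^\Omega(E)=0$ for some $E>E_1(\Omega)$ then (iii) forces $\mu^\Omega(E_1(\Omega))>0$, contradicting sens2. You should replace your Step~2 and Step~3 dichotomy with this structure: the positivity of $\mu^\Omega$ near $0$ and the monotonicity estimate are exactly the two facts your sketch omits.
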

The advantage of the quadratic form $q_E^\Omega$ is two-fold. First, functions in the considered variational space are now scalar valued and, second, the infinite mass boundary conditions does not appear in the variational formulation. However, the first drawback is that $\dom{(q_E^\Omega)}$ contains the Hardy space $\cH_{\rm h}^2(\Omega)$, constituted of holomorphic functions with traces in $L^2(\partial\Omega)$. In particular, $\dom{(q_E^\Omega)}$ is not a usual Sobolev space and a special care is needed in order to prove Theorem \ref{thm:vf}. In particular, it asks for a precise description of the domain $\dom{(q_E^\Omega)}$ as well as the domain of the associated self-adjoint operator \emph{via} Kato's first representation theorem (see \cite[Chap. VI, Thm. 2.1]{Kat}). It is done using convolution operators reminiscent of what is done in \cite{AMV,OBV}, elliptic regularity properties of the maximal Wirtinger operators as well as using Cauchy singular integral operators on $\partial\Omega$, seen as periodic pseudo-differential operators.\\

Theorem \ref{thm:vf} is reminiscent of \cite{DES00,DES03}, where a similar strategy is used to deal with the Dirac-Coulomb operator. To our knowledge, this is the first time this idea is extended to boundary value problems and now, we describe its heuristic.

Let $(u,v)^\top \in \dom{(D^\Omega)}$ be an eigenfunction associated with the eigenvalue $E>0$. In $\Omega$, the eigenvalue equation reads
\begin{equation}\label{eqn:vpdir}
	-2\rmi\partial_z v = E u,\quad -2\rmi\partial{\bar z} u = E v.
\end{equation}
If we assume that this identity is true up to the boundary $\partial\Omega$, we obtain the following boundary condition for $u$:
\begin{equation}\label{eqn:bc1}
	\overline{n} \partial_{\bar z} u + \frac{E}2 u = 0 \text{ on } \partial\Omega.
\end{equation}
Now, Equation \eqref{eqn:vpdir}  gives
\begin{equation}\label{eqn:eqvpdeduced}
		-4 \partial_z \partial_{\bar z} u = E^2 u \text{ in } \Omega.
\end{equation}
Hence, a weak formulation is obtained taking the scalar product by $u$, integrating by parts and taking into account the boundary condition \eqref{eqn:bc1}. This formally gives $q_E^\Omega(u) = 0$ and this is the reason for introducing the quadratic form $q_E^\Omega$ in \eqref{eqn:deffq}.

Let us add two remarks. The first one explains that \eqref{eqn:bc1}-\eqref{eqn:eqvpdeduced} can be recast into a non-linear eigenvalue problem for a Laplace operator with oblique boundary conditions. The second remark, explains how Theorem \ref{thm:vf} could be extended to handle the next eigenvalues.
\begin{rem}Note that \eqref{eqn:eqvpdeduced} is an eigenvalue equation for the Laplace operator and reads $-\Delta u = E^2 u$. The boundary condition \eqref{eqn:bc1} is a relation between the normal derivative, the tangential derivative and the value of the function on $\partial\Omega$. If we let ${\bf t}$ be the tangent field on $\partial\Omega$ such that $({\bf n},{\bf t})$ is a direct frame, the problem can be re-interpreted as an oblique problem
\begin{equation}
	\label{eqn:vpnl}
	\left\{\begin{array}{ccll}
			-\Delta u & = & E^2 u &\text{ in } \Omega,\\
			 \partial_{n}u + \rmi \partial_t u+ E u &=& 0 &\text{ on } \partial\Omega,
		\end{array}
	\right.
\end{equation}
where $\partial_n$ and $\partial_t$ are the normal and tangential derivatives, respectively.

Note that Problem \eqref{eqn:vpnl} is non-linear because the parameter $E>0$ appears both in the eigenvalue equation and in the boundary condition.
\end{rem}

\begin{rem} For $j\geq1$, one can consider the $j$-th min-max level of $q_E^\Omega$ defined as
\[
	\mu_j^\Omega(E) := \inf_{\tiny{\begin{array}{c}F \subset \dom{(q_E^\Omega)}\\ \dim F = j\end{array}}}\sup_{u \in F\setminus\{0\}} \frac{4 \int_\Omega |\partial_{\bar z} u|^2 dx - E^2 \int_{\Omega}|u|^2dx + E \int_{\partial\Omega} |u|^2 ds}{\int_\Omega |u|^2 dx}.
\]
As in \cite{DES00}, Theorem \ref{thm:vf} could be extended as follows: $E>0$ is the $j$-th non-negative eigenvalue of $D^\Omega$ if and only if $\mu_j^\Omega(E) = 0$. We do not discuss it here because we are concerned only with the principal eigenvalue $E_1(\Omega)$.
\end{rem}

Finally, let us comment the hypothesis on $\Omega$. First, one would like to lower the smoothness hypothesis to be able to handle, for instance, Lipschitz domains. This is a natural question but there is no reason for the Dirac operator with infinite mass boundary  to be self-adjoint on such a domain $\dom{(D^\Omega)}$ (see the case of polygonal domains in \cite{LTOB}). Moreover, as part of the proof relies on pseudo-differential techniques, we prefer to keep the $C^\infty$ smoothness assumption on $\partial\Omega$ because it allows for a more efficient treatment of singular integral operators on the boundary. Second, the simply connectedness assumption may be an unnecessary hypothesis for Theorem \ref{thm:vf} to hold. Nevertheless, we are not able to drop it in Theorem \ref{thm:ineq} because the proof relies on the Riemann mapping theorem to build an admissible test function for $q_E^\Omega$.

\subsection{Structure of the paper}
In Section \ref{sec:prelim}, we gather several results on Sobolev spaces on $\partial\Omega$, periodic pseudo-differential operators on $\partial\Omega$ and deduce various mapping properties of the Cauchy singular integral operators.

Section \ref{sec:maxc-rop} contains a description of the domain of the maximal Wirtinger operators. In particular, we discuss the existence of a trace operator for functions belonging to these domains and state a fundamental elliptic regularity result.

Section \ref{sec:berghard} deals with the description of the Bergman and Hardy spaces on $\Omega$ thanks to integral operators. This is done by introducing the Szeg\"o projectors on the Sobolev spaces on the boundary $H^s(\partial\Omega)$ ($s\in\{-\frac12,0,\frac12\}$). As a byproduct of this analysis we are able to describe explicitly the domains of the maximal Wirtinger operators.

Theorem \ref{thm:vf} is proved in Section \ref{sec:vcproof}. We start by describing the domain of the quadratic form $q_E^\Omega$ in terms of the first-order Sobolev space $H^1(\Omega)$ and the Hardy space on $\Omega$. Then, the analysis is pushed forward to study the domain of the self-adjoint operator associated with $q_E^\Omega$ \emph{via} Kato's first representation theorem (see \cite[Chap. VI, Thm. 2.1]{Kat}). Combining these tools, we prove Theorem \ref{thm:vf}.

Then, we apply Theorem \ref{thm:vf} in Section \ref{sec:isopin} to prove Theorem \ref{thm:ineq}. The proof is by adapting the well-known proof of Szeg\"o \cite{Sze54} to our setting, constructing an adequate test function for the new variational formulation.

In Section \ref{sec:aboutFK}, we show that Conjecture \ref{conj:FK} can be reformulated and that it is related to the famous Bossel-Daners inequality.

We conclude in Section \ref{sec:numerics} illustrating by numerical experiments the validity of Conjecture \ref{conj:FK} and several theoretical results discussed all along the paper.

\section{Preliminaries}\label{sec:prelim}
\subsection{Sobolev spaces on $\partial\Omega$}
In the following, $\T$ is the torus $\T := \mathbb{R}/\mathbb{Z}$, $\cD(\T) = C^\infty(\T)$ is the space periodic smooth functions on the torus $\T$ and $\cD(\T)'$ the space of periodic distributions on the torus $\T$.
Let  $f \in \cD(\T)'$ we define its Fourier coefficients using the duality pairing by
\[
	\widehat{f}(n) := \langle f, e_{-n}\rangle_{\cD(\T)',\cD(\T)},\quad e_n := t\in \T \mapsto e^{2\rmi\pi n t}.
\]
For $s \in \R$, the Sobolev space of order $s$ on $\T$ is defined as
\[
	H^s(\T) := \{f \in \cD(\T)' : \sum_{n =-\infty}^{+\infty}(1+|n|)^{2s}|\widehat{f}(n)|^2 < +\infty\}.
\]
Set $\ell := |\partial\Omega|$ and let $\gamma : \R \big/ [0,\ell] \to \partial\Omega$ be a smooth arc-length parametrization of $\partial\Omega$. Consider the map
\[
	U^* : \cD(\T) \to \cD(\partial\Omega),\quad (U^*g)(x) := \ell^{-1}g(\ell^{-1}\gamma^{-1}(x)),\ x\in\partial\Omega,
\]
where we have set $\cD(\partial\Omega) := C^\infty(\partial\Omega)$. We define the map $U : \cD(\partial\Omega)' \to \cD(\T)'$ as
\begin{equation}\label{eqn:defU}
	\langle U f, g\rangle_{\cD(\T)',\cD(\T)} := \langle  f, U^*g\rangle_{\cD(\partial\Omega)',\cD(\partial\Omega)}.
\end{equation}
The Sobolev space of order $s\in\R$ on $\partial\Omega$ is defined as
\[
	H^s(\partial\Omega) := \{ f \in \cD(\partial\Omega)' : Uf \in H^s(\T)\}.
\]

\subsection{Periodic pseudo-differential operators}
Let us start by defining  periodic pseudo-differential operators on $\T$.
\begin{defin} A linear operator $H$ on $C^\infty(\T)$ is a periodic pseudo-differential operator on $\T$ if there exists $h : \T \times \Z \to \C$ such that:
\begin{enumerate}
	\item for all $n\in \Z$, $h(\cdot,n) \in C^\infty(\T)$,
	\item $H$ acts as $Hf = \sum_{n\in\Z} h(\cdot,n) \widehat{f}(n) e_n$,
	\item there exists $\alpha \in \R$ such that for all $p,q\in\N_0$ there exists $c_{p,q}>0$ such that there holds
	\[
		\Big|\big(\frac{d^p}{dt^p} (\omega^q h)\big)(t,n)\Big| \leq c_{p,q}(1+|n|)^{\alpha-q},
	\]
where the operator $\omega$ is defined for all $(t,n)\in\T\times\Z$ by $(\omega h)(t,n) := h(t,n+1) - h(t,n)$.
\end{enumerate}
$\alpha$ is called the order of the pseudo-differential operator $H$. The set of pseudo-differential operators of order $\alpha$ on $\T$ is denoted $\Psi^\alpha$ and we define
\[
	\Psi^{-\infty} := \bigcap_{\alpha\in\R} \Psi^{\alpha}.
\]
\end{defin}

\begin{exm} For further use, we introduce the example of multiplication operators. Consider $H : \C^\infty(\T) \to C^\infty(\T)$ defined as
\[
	(Hf)(t) := h(t) f(t),\quad h\in C^\infty(\T).
\]
Decomposing in Fourier series, one immediately obtains
\[
	(Hf) = \sum_{n\in\Z} h\widehat{f}(n) e_n.
\]
There holds $\omega^q h =0$ for all $q\geq1$ and, as $h\in C^\infty(\T)$, for all $t\in\T$ we obtain
\[
	\Big|\big(\frac{d^p h}{dt^p}\big)(t)\Big| \leq c_{p},\quad\text{for some } c_p>0
\]
and we get $H \in \Psi^0$.
\label{exm:pseudo0}
\end{exm}

Using the map $U$ defined in \eqref{eqn:defU}, we define periodic pseudo-differential operators on $\partial\Omega$ as follows.
\begin{defin} A linear operator $H$ on $C^\infty(\partial\Omega)$ is a periodic pseudo-differential operator on $\partial\Omega$ of order $\alpha\in \R$ if the operator $H_0 := U H U^{-1} \in \Psi^\alpha$. The set of pseudo differential operators on $\partial\Omega$ of order $\alpha$ is denoted $\Psi^\alpha_{\partial\Omega}$ and we set
\[
	\Psi_{\partial\Omega}^{-\infty} := \bigcap_{\alpha\in\R}\Psi_{\partial\Omega}^\alpha.
\]
\end{defin}
We will need the following properties of pseudo-differential operators on $\partial\Omega$. They can be found in \cite[\S 5.8 \& 5.9]{SV}.
\begin{prop}\label{prop:pseudoimpo} Let $s,\alpha,\beta\in \R$ and $H\in \Psi_{\partial\Omega}^\alpha, G \in \Psi_{\partial\Omega}^\beta$.
\begin{enumerate}
	\item $H$ extends uniquely to a bounded linear operator, also denoted $H$, from $H^s(\partial\Omega)$ to $H^{s-\alpha}(\partial\Omega)$.
	\item\label{itm:22} \label{itm:pseudoimp1}There holds
		\[
			H + G \in \Psi_{\partial\Omega}^{\max(\alpha,\beta)},\quad HG \in \Psi_{\partial\Omega}^{\alpha+\beta},\quad [H,G] \in \Psi_{\partial\Omega}^{\alpha+\beta -1}. 
		\]
\end{enumerate}
\end{prop}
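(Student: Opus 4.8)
The plan is to transfer both assertions to the torus $\T$ via the map $U$ and invoke the periodic symbolic calculus there; this is the route taken in \cite[\S5.8--5.9]{SV}, and I would reconstruct it as follows. By the very definition of $H^s(\partial\Omega)$, the map $U$ restricts to a linear bijection of $H^s(\partial\Omega)$ onto $H^s(\T)$ for every $s\in\R$ (an isometry for the natural norm $\|f\|_{H^s(\partial\Omega)}:=\|Uf\|_{H^s(\T)}$), and it intertwines $H+G$ with $H_0+G_0$, $HG$ with $H_0G_0$, and $[H,G]$ with $[H_0,G_0]$, where $H_0:=UHU^{-1}\in\Psi^\alpha$ and $G_0:=UGU^{-1}\in\Psi^\beta$. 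Hence it suffices to prove (1) and (2) for periodic pseudo-differential operators on $\T$.

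The Fourier-analytic input I would set up first: writing the $t$-Fourier series $h(t,n)=\sum_{m\in\Z}\widehat h(m,n)e_m(t)$ of the symbol of $H_0$, integration by parts in $t$ combined with the symbol estimate for $q=0$ gives, for every $N\in\N_0$, a constant $C_N>0$ with $|\widehat h(m,n)|\le C_N(1+|m|)^{-N}(1+|n|)^\alpha$; since $\omega$ acts in the variable $n$ it commutes with the $t$-Fourier transform, so the same argument applied to $\omega^q h$ yields $|(\omega^q\widehat h)(m,n)|\le C_{N,q}(1+|m|)^{-N}(1+|n|)^{\alpha-q}$. From $H_0e_n=h(\cdot,n)e_n=\sum_m\widehat h(m,n)e_{m+n}$ one reads off $\widehat{H_0f}(k)=\sum_n\widehat h(k-n,n)\widehat f(n)$. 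For (1) I would then combine the Peetre inequality $(1+|k|)^{s-\alpha}\lesssim(1+|k-n|)^{|s-\alpha|}(1+|n|)^{s-\alpha}$ with the bound on $\widehat h$, choose $N>|s-\alpha|+1$ so that $(1+|\cdot|)^{|s-\alpha|-N}\in\ell^1(\Z)$, and apply Young's inequality for convolutions on $\Z$ (equivalently a Schur test) to get $\|H_0f\|_{H^{s-\alpha}(\T)}\lesssim\|f\|_{H^s(\T)}$; the uniqueness of the bounded extension is then density of $C^\infty(\T)$ in $H^s(\T)$.

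For (2), the sum statement is immediate from $(1+|n|)^{\alpha-q}+(1+|n|)^{\beta-q}\lesssim(1+|n|)^{\max(\alpha,\beta)-q}$. For the composition, applying $H_0G_0$ to $e_n$ gives $H_0G_0e_n=\sum_m\widehat g(m,n)\,h(\cdot,m+n)\,e_{m+n}$, so $H_0G_0$ is again a periodic pseudo-differential operator, with symbol $\sigma(t,n)=\sum_m\widehat g(m,n)h(t,m+n)e_m(t)$; the estimates $|\tfrac{d^p}{dt^p}(\omega^q\sigma)(t,n)|\lesssim(1+|n|)^{\alpha+\beta-q}$ follow by expanding with the discrete Leibniz rule for $\omega$ in $n$ and the usual Leibniz rule for $\tfrac{d^p}{dt^p}$ — the factor $m^p$ produced by differentiating $e_m$ being absorbed by the rapid decay of $\widehat g(m,\cdot)$ — then estimating each factor as above and summing the convergent $m$-series (using Peetre again to dominate $(1+|m+n|)^{\alpha-q}$ by $(1+|m|)^{|\alpha-q|}(1+|n|)^{\alpha-q}$), so that $H_0G_0\in\Psi^{\alpha+\beta}$. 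For the commutator I would use the one-term composition asymptotics: telescoping $h(t,m+n)-h(t,n)$ together with $\sum_m\widehat g(m,n)e_m(t)=g(t,n)$ gives $\sigma(t,n)-h(t,n)g(t,n)=\sum_m\widehat g(m,n)\big(h(t,m+n)-h(t,n)\big)e_m(t)$, and this is a symbol of order $\alpha+\beta-1$ (the difference gains a factor ``$m$'' over $(\omega h)(t,n)$, and $\sum_m m\,\widehat g(m,n)e_m(t)=(2\pi\rmi)^{-1}(\partial_t g)(t,n)$). Since $h(t,n)g(t,n)=g(t,n)h(t,n)$ pointwise, the leading symbols of $H_0G_0$ and $G_0H_0$ cancel in their difference, hence $[H_0,G_0]\in\Psi^{\alpha+\beta-1}$.

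The step I expect to be the main obstacle is making this last argument quantitative: the crude pointwise bound only gives order $\alpha+\beta$ for $\sigma-hg$, and to gain the extra unit one must genuinely exploit the cancellation in $\sum_m\widehat g(m,n)e_m(t)=g(t,n)$ — that is, carry out a discrete Taylor expansion of $h(t,\cdot)$ around $n$ with a remainder controlled uniformly in all the symbol seminorms, and similarly handle the $t$-derivatives and $\omega$-differences of this remainder. This bookkeeping is precisely the periodic pseudo-differential calculus of \cite[\S5.8--5.9]{SV}, which we invoke rather than redevelop.
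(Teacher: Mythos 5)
The paper does not actually prove this proposition — its ``proof'' is simply the citation \cite[\S 5.8 \& 5.9]{SV}. Your outline faithfully reconstructs the standard periodic symbolic calculus from that reference (reduction to $\T$ via $U$, rapid decay of $\widehat h(m,n)$ in the first variable, Peetre/Schur boundedness for item~(1), discrete Leibniz and one-term Taylor expansion for composition and commutator), and you appropriately defer to \cite{SV} for the remaining seminorm bookkeeping, so this is essentially the same route as the paper, merely with more of the argument made explicit.
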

\subsection{Cauchy singular integral operators}For $f \in C^\infty(\partial\Omega)$, the Cauchy singular integral operator is defined as a principal value by
\[
	S_{\rm h}(f)(z) := \frac1{i\pi} \pv \int_{\partial\Omega} \frac{f(\xi)}{\xi-z}d\xi,\quad z\in \partial\Omega.
\]
We define its anti-holomorphic counterpart as
\[
		S_{\rm ah}(f)(z) := \overline{S_{\rm h}(\overline{f})(z)} = -\frac1{i\pi} \pv \int_{\partial\Omega} \frac{f(\xi)}{\overline{\xi}-\bar z}d\overline{\xi},\quad z\in \partial\Omega.
\]
It turns out $S_{\rm h}$ and $S_{\rm ah}$ are periodic pseudo-differential operators on $\partial\Omega$. This is the purpose of the following proposition.
\begin{prop} The linear maps $S_{\rm h}$ and $S_{\rm ah}$ are periodic pseudo-differential operators of order $0$. In particular, they are bounded linear operators from $H^s(\partial\Omega)$ onto itself for all $s\in \R$.
\label{prop:mapcauchy}
\end{prop}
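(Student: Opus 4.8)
The plan is to realize $S_{\rm h}$ as a pseudo-differential operator by pulling back to the torus $\T$ via the arc-length parametrization $\gamma$ and the map $U$, and then analyzing the resulting kernel. Concretely, writing $z = \gamma(\ell t)$ and $\xi = \gamma(\ell \tau)$, the substitution $d\xi = \ell\gamma'(\ell\tau)\,d\tau$ turns $S_{\rm h}$ into the principal-value operator on $\T$ with kernel
\[
	k(t,\tau) = \frac{1}{\rmi\pi}\,\frac{\ell\,\gamma'(\ell\tau)}{\gamma(\ell\tau)-\gamma(\ell t)}.
\]
The key analytic point is that $\gamma(\ell\tau)-\gamma(\ell t) = (\tau-t)\,g(t,\tau)$ with $g$ smooth and non-vanishing on a neighborhood of the diagonal (since $\gamma$ is a smooth embedding and $|\gamma'|\equiv\ell$), so near the diagonal $k(t,\tau)$ differs from the convolution kernel $\tfrac{1}{\rmi\pi}\,\tfrac{1}{\tau-t}$ of the periodic Hilbert transform by a smooth function. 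Away from the diagonal the kernel is smooth. The first step is therefore to recall (or cite \cite[\S 5.8--5.9]{SV}) that the periodic Hilbert transform $f\mapsto \mathrm{p.v.}\,\tfrac1{\rmi\pi}\int_\T \tfrac{f(\tau)}{\tau-t}\,d\tau$ has symbol $-\operatorname{sgn}(n)$ (with a convention for $n=0$), hence belongs to $\Psi^0$; equivalently, after a periodization of $\tfrac1{\tau-t}$ to $\cot(\pi(\tau-t))$ the statement is the classical fact that the conjugate-function operator is order zero.

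Next I would decompose $S_{\rm h}$ using a smooth cutoff $\chi$ supported near the diagonal: the ``far'' piece has a $C^\infty$ kernel on $\T\times\T$, hence is an operator in $\Psi^{-\infty}$ (it maps any $H^s$ to $C^\infty$), while the ``near'' piece is a smooth multiplier times the periodic Hilbert transform plus a smoothing remainder. Since multiplication by a $C^\infty$ function is in $\Psi^0$ (Example~\ref{exm:pseudo0}) and $\Psi^0$ is closed under composition and sums (Proposition~\ref{prop:pseudoimpo}\eqref{itm:22}), one concludes $U S_{\rm h} U^{-1}\in\Psi^0$, i.e. $S_{\rm h}\in\Psi^0_{\partial\Omega}$. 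For $S_{\rm ah}$, I would simply invoke the identity $S_{\rm ah}(f) = \overline{S_{\rm h}(\bar f)}$ given in the text: complex conjugation sends $\Psi^0_{\partial\Omega}$ to itself (on the symbol side it conjugates $h(t,n)$ and replaces $n$ by $-n$, preserving all the order-zero estimates), so $S_{\rm ah}\in\Psi^0_{\partial\Omega}$ as well. The boundedness $H^s(\partial\Omega)\to H^s(\partial\Omega)$ for all $s$ is then immediate from Proposition~\ref{prop:pseudoimpo}(1) with $\alpha=0$; surjectivity onto $H^s$ would follow from the classical fact that $S_{\rm h}^2 = \mathrm{Id}$ on $C^\infty(\partial\Omega)$ (a Plemelj-type identity), which shows $S_{\rm h}$ is its own inverse, hence bijective on each $H^s(\partial\Omega)$, and likewise for $S_{\rm ah}$.

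The main obstacle is the rigorous extraction of the principal part of the kernel near the diagonal and controlling the remainder: one must show that
\[
	k(t,\tau) - \frac{1}{\rmi\pi}\,\frac{1}{\tau-t}
\]
extends to a $C^\infty$ function across $\tau=t$, which requires a careful Taylor expansion of $\gamma$ and the non-vanishing of $g(t,\tau)$ together with periodicity bookkeeping (the naive kernel $\tfrac1{\tau-t}$ is not periodic, so one compares instead with $\pi\cot(\pi(\tau-t))$ and absorbs the difference, itself smooth, into the remainder). A secondary technical point is verifying that a convolution operator on $\T$ with kernel $\cot(\pi(\tau-t))$, or equivalently with Fourier multiplier $-\rmi\operatorname{sgn}(n)$, satisfies the difference-operator estimates in condition (3) of the definition of $\Psi^0$; this is elementary since the symbol is constant in $t$ and bounded, with $\omega^q$ applied to $\operatorname{sgn}(n)$ vanishing for $|n|$ large and $q\geq1$. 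Once these two points are in place, the rest is bookkeeping with the pseudo-differential calculus already recorded in Proposition~\ref{prop:pseudoimpo}.
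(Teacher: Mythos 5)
The paper's own ``proof'' of Proposition~\ref{prop:mapcauchy} is a one-line citation of \cite[Prop.~2.9]{BHOBP19}, where the operators $S_{\rm h}$ and $S_{\rm ah}$ appear as $C_\Sigma$ and $-C_\Sigma'$. You instead reconstruct the argument from first principles, and your sketch is essentially the standard (and correct) proof that the cited reference encapsulates: pull $S_{\rm h}$ back to $\T$ via $U$, factor $\gamma(\ell\tau)-\gamma(\ell t)=(\tau-t)g(t,\tau)$ with $g$ smooth and non-vanishing near the diagonal, subtract the periodic Hilbert transform kernel, observe the remainder is smooth (hence $\Psi^{-\infty}$), and conclude $U S_{\rm h} U^{-1}\in\Psi^0$ via Example~\ref{exm:pseudo0} and Proposition~\ref{prop:pseudoimpo}; then obtain $S_{\rm ah}$ by conjugation and surjectivity via $S_{\rm h}^2=\mathrm{Id}$, exactly the Plemelj identity invoked later in the paper in the proof of Proposition~\ref{prop:proj_conti}. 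You also correctly identify the two real technical points --- the smooth extension of $k(t,\tau)-\frac{1}{\rmi\pi(\tau-t)}$ across the diagonal and the periodization via $\cot(\pi(\tau-t))$, plus the elementary difference estimates for the multiplier $-\operatorname{sgn}(n)$. Two small infelicities worth fixing if you wrote this up: $|\gamma'|\equiv 1$ for an arc-length parametrization (you wrote $\equiv\ell$; what you mean is $|\tfrac{d}{dt}\gamma(\ell t)|=\ell$), and ``smooth multiplier times the periodic Hilbert transform'' is loose since the Schwartz-kernel factor $m(t,\tau)$ is not of product form --- but you recover from this in the final paragraph by stating precisely the smooth-difference claim, which is what actually matters. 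Net: your route buys a self-contained argument in place of an external citation; what it costs is the (nontrivial but routine) Taylor/periodization bookkeeping you flag.
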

\begin{proof} This is proved in \cite[Prop 2.9.] {BHOBP19} where the operators $S_{\rm h}$ and $S_{\rm ah}$ are denoted $C_{\Sigma}$ and $-C_\Sigma'$ respectively (with $\Sigma := \partial\Omega$).
\end{proof}

We will also need the following property.
\begin{prop} Let  $H_\mathbf{n}$ be the multiplication operator by the normal $\mathbf{n}$ in $C^\infty(\partial\Omega)$. There holds:
\begin{enumerate}
	\item\label{itm:1pseudo} $H_{\mathbf{n}}$ is a periodic pseudo-differential operator of order $0$.
	\item\label{itm:2pseudo} Let $\sharp \in \{{\rm h},{\rm ah}\}$ we have $[H_{\mathbf{n}},S_\sharp] \in \Psi_{\partial\Omega}^{-1}$.
	\item\label{itm:3pseudo} There holds $S_{\rm ah} + S_{\rm h} \in \Psi_{\partial\Omega}^{-\infty}$.
\end{enumerate}
\label{prop:pseudocommut}
\end{prop}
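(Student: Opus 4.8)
The three assertions concern the multiplication operator $H_{\mathbf n}$ and its interaction with the Cauchy singular integral operators, and I would establish them in the order stated, each one feeding into the next.

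For item \eqref{itm:1pseudo}, I would invoke Example \ref{exm:pseudo0}: since $\partial\Omega$ is $C^\infty$, the components $n_1,n_2$ of the outward normal are smooth functions on $\partial\Omega$, hence so is $\mathbf n = n_1+\rmi n_2$. Transporting to the torus via $U$, the operator $U H_{\mathbf n} U^{-1}$ is multiplication by the smooth periodic function $t\mapsto \mathbf n(\gamma(\ell t))$ (up to the normalization in the definition of $U^*$, which does not affect smoothness), so by the computation in Example \ref{exm:pseudo0} it lies in $\Psi^0$, i.e.\ $H_{\mathbf n}\in\Psi^0_{\partial\Omega}$.

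Item \eqref{itm:2pseudo} is then a direct consequence of the commutator gain in Proposition \ref{prop:pseudoimpo}\eqref{itm:22}: we have $H_{\mathbf n}\in\Psi^0_{\partial\Omega}$ by item \eqref{itm:1pseudo} and $S_\sharp\in\Psi^0_{\partial\Omega}$ for $\sharp\in\{{\rm h},{\rm ah}\}$ by Proposition \ref{prop:mapcauchy}, so $[H_{\mathbf n},S_\sharp]\in\Psi^{0+0-1}_{\partial\Omega}=\Psi^{-1}_{\partial\Omega}$.

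Item \eqref{itm:3pseudo} is the substantive point. The heuristic is that $S_{\rm h}$ and $S_{\rm ah}$ have opposite principal symbols on $\partial\Omega$ (one is essentially the Szeg\"o projection minus its complement, the other its mirror image), so their sum should be smoothing. Concretely, I would combine the explicit principal-symbol information behind Proposition \ref{prop:mapcauchy} — as extracted from \cite[Prop.\ 2.9]{BHOBP19} — namely that in suitable local coordinates $S_{\rm h}$ has symbol $\mathrm{sgn}(\xi)$ modulo lower order and $S_{\rm ah}$ has symbol $-\mathrm{sgn}(\xi)$ modulo lower order, with the algebra of Proposition \ref{prop:pseudoimpo}\eqref{itm:22}. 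One writes $S_{\rm h}+S_{\rm ah}\in\Psi^0_{\partial\Omega}$ a priori, shows its order-$0$ principal symbol vanishes so that in fact $S_{\rm h}+S_{\rm ah}\in\Psi^{-1}_{\partial\Omega}$, then — using that the full symbols of $S_{\rm h}$ and $S_{\rm ah}$ are (up to smoothing) negatives of each other because they arise from the same Cauchy kernel on a $C^\infty$ curve under complex conjugation $z\mapsto\bar z$, $\xi\mapsto\bar\xi$ — iterates to conclude that every term in the symbol expansion of $S_{\rm h}+S_{\rm ah}$ cancels, whence $S_{\rm h}+S_{\rm ah}\in\bigcap_{\alpha}\Psi^\alpha_{\partial\Omega}=\Psi^{-\infty}_{\partial\Omega}$. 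An alternative, possibly cleaner, route is to note that $S_{\rm h}-\mathrm{Id}$ (twice the Szeg\"o projector, up to normalization) and $S_{\rm ah}+\mathrm{Id}$ differ by a smoothing operator because the associated Hardy/anti-Hardy decompositions of $H^s(\partial\Omega)$ are complementary with smooth overlap; adding these gives the claim directly. The main obstacle is precisely pinning down the full (not just principal) symbol cancellation with enough precision to land in $\Psi^{-\infty}_{\partial\Omega}$ rather than merely $\Psi^{-1}_{\partial\Omega}$; the smoothness of $\partial\Omega$ is what makes this work, since it guarantees the remainder kernels are $C^\infty$ off the diagonal and the Cauchy kernel's principal part is the only non-smooth contribution.
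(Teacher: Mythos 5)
Items \eqref{itm:1pseudo} and \eqref{itm:2pseudo} of your proposal coincide with the paper's argument: pull $H_{\mathbf n}$ back to $\T$ via $U$, observe it is multiplication by a smooth periodic function, apply Example~\ref{exm:pseudo0}, and then use the commutator gain of Proposition~\ref{prop:pseudoimpo}\eqref{itm:22} together with Proposition~\ref{prop:mapcauchy}. For item \eqref{itm:3pseudo} you take a genuinely different route, and the comparison is worth spelling out. The paper does not reason symbol term by symbol term; it cites \cite[Prop.~2.9]{BHOBP19} for the \emph{exact} decomposition
\[
	S_{\rm h} = L + R_1,\qquad S_{\rm ah} = -L + R_2,\qquad L\in\Psi_{\partial\Omega}^0,\ R_1,R_2\in\Psi_{\partial\Omega}^{-\infty},
\]
so that $S_{\rm h}+S_{\rm ah}=R_1+R_2\in\Psi^{-\infty}_{\partial\Omega}$ in one line by Proposition~\ref{prop:pseudoimpo}\eqref{itm:22}. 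You, instead, propose to extract only the principal-symbol information from that reference and then iterate the cancellation across the full asymptotic expansion. That is a coherent strategy in principle, but it is strictly more work and you correctly flag it as the "main obstacle": you would need to show that \emph{every} homogeneous term cancels, not just the leading one, which effectively amounts to re-proving the very decomposition that the cited proposition already hands you. Your "alternative, cleaner route" via complementarity of the Hardy and anti-Hardy decompositions is really a heuristic for the same cancellation and, as phrased ("complementary with smooth overlap"), is not precise enough to stand on its own. In short: your plan is sound, but you should recognize that \cite[Prop.~2.9]{BHOBP19} — the same reference you invoke — already delivers the full $\Psi^{-\infty}$ decomposition, at which point the iterative symbol argument is superfluous and the paper's one-line conclusion applies.
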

\begin{proof} Point \eqref{itm:1pseudo} is proved remarking that the operator $U H_{\mathbf{n}}U^{-1}$ is a multiplication operator in $\T$. Thanks to Example \ref{exm:pseudo0}, we know that $U H_{\mathbf{n}}U^{-1} \in \Psi^0$ hence by definition we get $H_{\mathbf{n}}\in \Psi_{\partial\Omega}^0$.

Let us deal with Point \eqref{itm:2pseudo}. Let $\sharp \in \{{\rm h},{\rm ah}\}$, by Proposition \ref{prop:mapcauchy}, $S_\sharp \in \Psi_{\partial\Omega}^0$ and by Point \eqref{itm:1pseudo} $H_{\mathbf{n}} \in \Psi_{\partial\Omega}^0$. Hence, by \eqref{itm:pseudoimp1} Proposition \ref{prop:pseudoimpo}, we obtain Point \eqref{itm:2pseudo}.

Finally, we prove Point \eqref{itm:3pseudo}. By \cite[Proposition 2.9.]{BHOBP19} there exists $L\in \Psi_{\partial\Omega}^0$ and $R_1,R_2 \in \Psi_{\partial\Omega}^{-\infty}$ such that
\[
	S_{\rm h} = L + R_1,\quad S_{\rm ah} = -L + R_2. 
\]
Hence, $S_{\rm h} + S_{\rm ah} = R_1 + R_2 \in \Psi_{\partial\Omega}^{-\infty}$ by \eqref{itm:22} Proposition \ref{prop:pseudoimpo}.
\end{proof}
\section{Maximal Wirtinger operators}
\label{sec:maxc-rop}
In this section we describe elemental properties of the maximal Wirtinger operators defined as
\begin{align*}
		{\partial}_{\rm h} u =  \partial_{\bar z}u ,& \quad \dom( \partial_{\rm h}) := \{u \in L^2(\Omega) : \partial_{\bar z}u \in L^2(\Omega)\},\\
	{\partial}_{\rm ah} u = \partial_z u ,& \quad \dom( \partial_{\rm ah}) := \{u \in L^2(\Omega) : \partial_{z}u \in L^2(\Omega)\}.
\end{align*}

For $\sharp \in \{{\rm h}, {\rm ah}\}$, consider the operator norms $\|\cdot\|_{\sharp}$ defined as
\[
	\|u\|_{\sharp}^2 := \|\partial_{\sharp} u\|_{L^2(\Omega)}^2 + \|u\|_{L^2(\Omega)}^2,\quad u\in \dom(\partial_{\sharp}).
\]
In particular, $\dom(\partial_{\sharp})$ endowed with the scalar product defined for $u,v\in\dom(\partial_{\sharp})$ by
\[
\langle u,v\rangle_\sharp =\langle\partial_\sharp u,\partial_\sharp v\rangle_{L^2(\Omega)} + \langle u,v\rangle_{L^2(\Omega)}
\]
is a Hilbert space.

The first lemma is obtained by a simple integration by parts.
\begin{lem} The following identities hold.
\[
	H^1(\R^2) = \{f\in L^2(\R^2) : \partial_z f \in L^2(\R^2)\} = \{f\in L^2(\R^2) : \partial_{\bar z} f \in L^2(\R^2)\}
\]
\label{lem:equivfullspace}
\end{lem}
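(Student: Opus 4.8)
The plan is to pass to the Fourier transform on $\R^2$, where both Wirtinger operators act as multiplication by symbols of the same modulus, so that imposing $\partial_z f\in L^2(\R^2)$ or $\partial_{\bar z}f\in L^2(\R^2)$ on a function $f\in L^2(\R^2)$ amounts to exactly the same requirement as membership in $H^1(\R^2)$.

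First I would recall the standard description of the Sobolev space through Plancherel's theorem: writing $\widehat{g}$ for the Fourier transform of $g$, one has $g\in H^1(\R^2)$ if and only if $g\in L^2(\R^2)$ and $\xi\mapsto|\xi|\,\widehat{g}(\xi)\in L^2(\R^2)$, the square of the $H^1$-norm being comparable to $\int_{\R^2}(1+|\xi|^2)|\widehat{g}(\xi)|^2\,\dd\xi$. Next, for $f\in L^2(\R^2)$ regarded as a tempered distribution, I would compute from $\partial_z=\tfrac12(\partial_1-\rmi\partial_2)$ and $\partial_{\bar z}=\tfrac12(\partial_1+\rmi\partial_2)$ that
\[
	\widehat{\partial_z f}(\xi)=\frac{\rmi}{2}(\xi_1-\rmi\xi_2)\,\widehat{f}(\xi),\qquad \widehat{\partial_{\bar z}f}(\xi)=\frac{\rmi}{2}(\xi_1+\rmi\xi_2)\,\widehat{f}(\xi),
\]
and observe that $|\xi_1\mp\rmi\xi_2|=|\xi|$, hence $|\widehat{\partial_z f}(\xi)|=|\widehat{\partial_{\bar z}f}(\xi)|=\tfrac12|\xi|\,|\widehat{f}(\xi)|$ for almost every $\xi\in\R^2$. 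Consequently, for $f\in L^2(\R^2)$ each of the conditions $\partial_z f\in L^2(\R^2)$ and $\partial_{\bar z}f\in L^2(\R^2)$ is equivalent to $|\xi|\,\widehat{f}\in L^2(\R^2)$, that is, to $f\in H^1(\R^2)$; this gives both set equalities at once, and incidentally shows $\|\partial_z f\|_{L^2(\R^2)}=\|\partial_{\bar z}f\|_{L^2(\R^2)}=\tfrac12\|\nabla f\|_{L^2(\R^2)}$.

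As the phrasing of the statement suggests, the same facts are also visible ``by a simple integration by parts'': for $f\in C_c^\infty(\R^2)$, expanding $|\partial_z f|^2$ and $|\partial_{\bar z}f|^2$ in terms of $\partial_1 f$ and $\partial_2 f$ shows that $\int_{\R^2}|\partial_z f|^2$, $\int_{\R^2}|\partial_{\bar z}f|^2$ and $\tfrac14\int_{\R^2}|\nabla f|^2$ differ only through $\int_{\R^2}\mathrm{Im}\big((\partial_1 f)\,\overline{\partial_2 f}\big)\,\dd x$, which vanishes since (writing $f=g+\rmi h$ with $g,h$ real) the integrand equals $\partial_1 h\,\partial_2 g-\partial_2 h\,\partial_1 g=\partial_1(h\,\partial_2 g)-\partial_2(h\,\partial_1 g)$, a divergence; this gives $\|\partial_z f\|_{L^2(\R^2)}^2=\|\partial_{\bar z}f\|_{L^2(\R^2)}^2=\tfrac14\|\nabla f\|_{L^2(\R^2)}^2$, whence $H^1(\R^2)\subseteq\{f\in L^2(\R^2):\partial_z f\in L^2(\R^2)\}$, and likewise for $\partial_{\bar z}$, by density of $C_c^\infty(\R^2)$ in $H^1(\R^2)$; the reverse inclusions follow either from the Fourier computation above or by mollification and truncation combined with the norm identity just obtained. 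There is no genuine obstacle in this lemma; the only point worth a line of care is that $\partial_z f$ and $\partial_{\bar z}f$ must be understood in the distributional sense, $f$ being a priori only in $L^2(\R^2)$, which is exactly why the tempered-distribution Fourier transform is the most transparent tool.
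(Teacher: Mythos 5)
Your proposal is correct, and your primary route (Fourier transform plus Plancherel) is genuinely different from the paper's. The paper argues purely by integration by parts on $C_0^\infty(\R^2)$, obtaining $\|\nabla f\|_{L^2(\R^2)}^2 = 4\|\partial_{\bar z}f\|_{L^2(\R^2)}^2 = 4\|\partial_z f\|_{L^2(\R^2)}^2$ via $\langle f,-\Delta f\rangle = 4\langle f,-\partial_z\partial_{\bar z}f\rangle = 4\langle f,-\partial_{\bar z}\partial_z f\rangle$, and then invokes density of $C_0^\infty(\R^2)$ in $H^1(\R^2)$. That is your second argument, in a slightly compressed form. The Fourier approach you lead with buys something real: since $|\xi_1\mp\rmi\xi_2|=|\xi|$, the conditions $\partial_z f\in L^2$, $\partial_{\bar z}f\in L^2$, and $|\xi|\widehat f\in L^2$ are all literally the same pointwise condition on $\widehat f$, so \emph{both} inclusions drop out at once with no density step at all. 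By contrast, the paper's density argument only directly yields the inclusion $H^1(\R^2)\subseteq\{f\in L^2:\partial_\star f\in L^2\}$; the reverse inclusions require either your Fourier identity or density of $C_0^\infty(\R^2)$ in the maximal domain $\{f\in L^2:\partial_\star f\in L^2\}$ (standard via mollification and truncation on $\R^2$, but a distinct fact from density in $H^1$), and the paper leaves this implicit. You flag precisely this point, which is the one place in this short lemma where one could slip, so your treatment is the more careful of the two.
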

\begin{proof} Let $f\in C_0^\infty(\R^2)$. Integrating by parts several times we obtain:
\begin{align*}
	\|\nabla f\|_{L^2(\R^2)}^2 = \langle f, -\Delta f\rangle_{L^2(\R^2)} &= 4 \langle f, -\partial_{z}\partial_{\bar z} f\rangle_{L^2(\R^2)}  = 4 \|\partial_{\bar z} f\|_{L^2(\R^2)}^2\\
	&= 4 \langle f, -\partial_{\bar z}\partial_{z} f\rangle_{L^2(\R^2)}  = 4\|\partial_z f\|_{L^2(\R^2)}^2
\end{align*}
As $C_0^\infty(\R^2)$ is dense in $H^1(\R^2)$, we obtain the expected result.
\end{proof}

The next lemma is a density result.

\begin{lem} Let $\sharp \in \{{\rm h}, {\rm ah}\}$. The space $C^\infty(\overline{\Omega}) := C^\infty(\overline{\Omega},\C)$ is dense in $\dom(\partial_{\sharp})$.
\end{lem}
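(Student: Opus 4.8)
The plan is to prove the density of $C^\infty(\overline{\Omega})$ in $\dom(\partial_\sharp)$ by a combination of interior mollification and a boundary flattening/translation argument, mimicking the classical proof that $C^\infty(\overline{\Omega})$ is dense in $H^1(\Omega)$ for a smooth bounded domain. I treat the case $\sharp = {\rm h}$, i.e. the operator $\partial_{\bar z}$; the case $\sharp = {\rm ah}$ is identical by complex conjugation, since $u \in \dom(\partial_{\rm h})$ iff $\overline{u} \in \dom(\partial_{\rm ah})$ and $\|u\|_{\rm h} = \|\overline{u}\|_{\rm ah}$. So fix $u \in \dom(\partial_{\rm h})$; I want a sequence $u_k \in C^\infty(\overline{\Omega})$ with $\|u_k - u\|_{\rm h} \to 0$.

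First I would localize: since $\partial\Omega$ is compact and $C^\infty$, choose a finite open cover $\{V_0, V_1, \dots, V_N\}$ of $\overline{\Omega}$ where $V_0 \Subset \Omega$ and each $V_j$ ($j\geq 1$) is a coordinate patch in which $\Omega$ is, after a smooth diffeomorphism, the region below a graph; take a subordinate smooth partition of unity $\{\chi_j\}$, so $u = \sum_j \chi_j u$ with each $\chi_j u \in \dom(\partial_{\rm h})$ (the product rule $\partial_{\bar z}(\chi_j u) = \chi_j \partial_{\bar z} u + u\,\partial_{\bar z}\chi_j$ keeps us in $L^2$). The piece $\chi_0 u$ is compactly supported in $\Omega$, so standard mollification $\rho_\varepsilon * (\chi_0 u)$ gives smooth approximants converging in the graph norm, because $\partial_{\bar z}$ commutes with convolution. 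For a boundary piece $w := \chi_j u$, I would pull back to the half-plane model, and there translate $w$ slightly \emph{into} the domain: set $w_\tau(x) := w(x + \tau e)$ for a fixed inward-pointing vector $e$ and small $\tau > 0$. Then $w_\tau$ is defined (and in $L^2$ with $\partial_{\bar z}w_\tau \in L^2$) on a neighborhood of $\overline{\Omega \cap V_j}$, one has $w_\tau \to w$ and $\partial_{\bar z}w_\tau \to \partial_{\bar z}w$ in $L^2$ as $\tau \to 0$ by continuity of translation in $L^2$, and now $w_\tau$ can be mollified at scale $\varepsilon \ll \tau$ without leaving the domain, again using that mollification commutes with $\partial_{\bar z}$. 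Care is needed because the diffeomorphism straightening the boundary does not conjugate $\partial_{\bar z}$ to $\partial_{\bar z}$; under a smooth change of variables $\partial_{\bar z}$ becomes a general first-order operator $a\,\partial_{\bar z} + b\,\partial_z + c$ with smooth coefficients, but this is harmless: the condition $\partial_{\bar z}u \in L^2$ is equivalent, in the patch, to membership in a weighted graph space of such an operator, convolution still commutes with the constant-coefficient principal part up to lower-order commutator terms controlled by smoothness of the coefficients, and the translation-into-the-domain trick applies verbatim. Summing $\sum_j (w_j)_{\tau,\varepsilon}$ over the finitely many patches and diagonalizing in $(\tau,\varepsilon)$ yields the desired $u_k \in C^\infty(\overline{\Omega})$.

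The main obstacle — and the step deserving genuine care rather than a routine wave of the hand — is the interaction between the boundary-straightening diffeomorphism and the Wirtinger operator: one must check that "$\partial_{\bar z}u \in L^2$" transfers cleanly to the half-plane model and back, so that the elementary translate-and-mollify argument there is legitimate. An alternative, perhaps cleaner, route that avoids straightening altogether is to exploit the structure already developed in the paper: decompose $u = u_{\rm reg} + u_{\rm hol}$ according to the Bergman/Hardy picture of Section \ref{sec:berghard}, where $\partial_{\bar z}u \in L^2$ forces an $H^1$-part plus a holomorphic part with $L^2(\partial\Omega)$-trace (this is essentially the explicit description of $\dom(\partial_{\rm h})$ announced there); the $H^1$-component is approximable by $C^\infty(\overline{\Omega})$ by the classical result, and holomorphic functions with boundary trace in $L^2$ are approximable in the Hardy/Bergman norm by their dilates $u(r\,\cdot)$, which are holomorphic on a neighborhood of $\overline{\Omega}$ after pulling back through the Riemann map — hence smooth up to the boundary. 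Whichever route is chosen, the conclusion is the same: $C^\infty(\overline{\Omega})$ is dense in $\dom(\partial_\sharp)$ for $\sharp \in \{{\rm h},{\rm ah}\}$. $\qquad\blacksquare$
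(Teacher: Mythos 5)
Your proof takes a genuinely different route from the paper's, which settles the density by a short orthogonal-complement argument: assuming $u \perp C^\infty(\overline{\Omega})$ in the graph inner product $\langle\cdot,\cdot\rangle_\sharp$, the paper shows that $v_0 := (\partial_{\bar z}u)\,\mathds{1}_\Omega$ satisfies $\partial_z v_0 = u\,\mathds{1}_\Omega \in L^2(\R^2)$, so $v_0 \in H^1(\R^2)$ by Lemma \ref{lem:equivfullspace} and $v := \partial_{\bar z}u \in H_0^1(\Omega)$; pairing the identity $\partial_z\partial_{\bar z}v = v$ against a $C_0^\infty(\Omega)$ sequence then forces $\|v\|_{L^2(\Omega)}^2 = -\|\partial_{\bar z}v\|_{L^2(\Omega)}^2$, hence $v = 0$ and $u = \partial_z v = 0$. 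No partitions of unity, boundary straightening, or mollification are involved.

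The delicate step you flag in your first route is in fact a genuine gap, not a routine bookkeeping issue. After a boundary-straightening diffeomorphism, $\partial_{\bar z}$ becomes $L = a\,\partial_{\bar z} + b\,\partial_z + c$ with $b \not\equiv 0$, and the pulled-back function $\tilde w$ only satisfies $L\tilde w \in L^2$; there is no separate control on $\partial_z\tilde w$, because $\dom(\partial_{\rm h})$ is strictly larger than $H^1(\Omega)$ (it contains the whole Bergman space $\cA_{\rm h}^2(\Omega)$, whose elements have $\partial_z$ of arbitrarily bad boundary behavior). Friedrichs' lemma does handle the mollification commutator $[L, \rho_\varepsilon *\,]$, but the inward translation $T_\tau$ does \emph{not} commute with $L$ either, and the error $L(T_\tau\tilde w) - T_\tau(L\tilde w)$ contains $[b(x) - b(x+\tau e)]\,(\partial_z\tilde w)(x+\tau e)$, a product of an $O(\tau)$ coefficient with a quantity that is not in $L^2$; calling this ``harmless'' is not justified. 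The fix is to drop the straightening altogether: since $\partial\Omega$ is smooth one can choose, on each patch, a \emph{fixed Euclidean} direction $e_j$ pointing into $\Omega$ (segment condition), and translation by $\tau e_j$ in the original coordinates commutes exactly with the constant-coefficient operator $\partial_{\bar z}$, after which translate-then-mollify closes. Your alternative second route is circular in the logical order of the paper: Proposition \ref{prop:dirsum} (the $H^1\dotplus\cA_\sharp^2$ decomposition of $\dom(\partial_\sharp)$) is proved from Lemma \ref{lem:extop} and Lemma \ref{lem:regular}, both of which begin by taking a $C^\infty(\overline{\Omega})$ sequence converging in the graph norm --- i.e.\ they presuppose precisely the density you are trying to prove.
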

\begin{proof} Let $u \in \dom(\partial_{\rm h})$ and assume that for all $\varphi \in C^\infty(\overline{\Omega})$ there holds
\[
	0 = \ps{u}{\varphi}_{\rm h} = \ps{\partial_{\bar z} u}{\partial_{\bar z} \varphi}_{L^2(\Omega)} + \ps{u}{\varphi}_{L^2(\Omega)}.
\]
In particular, if $\varphi \in C_0^\infty(\Omega)$, we obtain $-\Delta u = - 4 u$ first in $\cD(\Omega)'$ then in $L^2(\Omega)$. Define $v = \partial_{\bar z} u$ and denote by $v_0$ its extension to the whole $\R^2$ by 0. For $\varphi \in C_0^\infty(\R^2)$ there holds
\begin{align*}
	\langle \partial_z v_0,\overline{\varphi}\rangle_{\mathcal{D}'(\R^2),\mathcal{D}(\R^2)} &= - \langle  v_0,\overline{\partial_{\bar z}\varphi}\rangle_{\mathcal{D}'(\R^2),\mathcal{D}(\R^2)}\\& = - \langle  v,\partial_{\bar z}\varphi\rangle_{L^2(\Omega)}\\& = - \langle  \partial_{\bar z} u,\partial_{\bar z}\varphi\rangle_{L^2(\Omega)}\\& = \langle  u,\varphi\rangle_{L^2(\Omega)}\\
& = \langle  u_0,\varphi\rangle_{L^2(\R^2)}\\
& = \langle u_0,\overline{\varphi}\rangle_{\mathcal{D}'(\R^2),\mathcal{D}(\R^2)},
\end{align*}
where $u_0$ denotes the extension by zero of $u$ to the whole $\R^2$. It gives $\partial_z v_0 = u_0 \in L^2(\R^2)$. By Lemma \ref{lem:equivfullspace}, $v_0$ is in $H^1(\R^2)$ and by \cite[Prop. IX.18.]{Bre} we get $v\in H_0^1(\Omega)$. Remark that in $\mathcal{D}'(\Omega)$, there holds $\partial_{\bar z}\partial_{ z} v = v$. Indeed, we have
\[
	\partial_{\bar z}\partial_{ z} v = \partial_{\bar z} \partial_{z}\partial_{\bar z} u = \partial_{\bar z} u = v.
\]
In particular this identity also holds true in $L^2(\Omega)$. Now, pick a sequence $v_n \in C_0^\infty(\Omega)$ converging to $v$ in the $H^1(\Omega)$-norm. There holds
\begin{align*}
	\langle v,v_n\rangle_{L^2(\Omega)} = \langle \partial_{z}\partial_{\bar z} v, v_n\rangle_{L^2(\Omega)} &= -\langle \partial_{\bar z} v, \overline{\partial_{\bar z}v_n}\rangle_{\mathcal{D}'(\Omega),\mathcal{D}(\Omega)}\\&=- \langle \partial_{\bar z} v, \partial_{\bar z}v_n\rangle_{L^2(\Omega)}
\end{align*}
Letting $n\to +\infty$ one obtains $\|v\|_{L^2(\Omega)}^2 = - \|\partial_{\bar z} v\|_{L^2(\Omega)}^2$ which implies $v = 0$. In $\mathcal{D}'(\Omega)$ we have $\partial_z v = \partial_z \partial_{\bar z} u  = u$. As $v=0$, $u=0$ which concludes the proof for $\sharp = {\rm h}$. The case $\sharp = {\rm ah}$ is handled similarly.
\end{proof}

In order to describe precisely the domains $\dom(\partial_\sharp)$ ($\sharp \in \{{\rm h}, {\rm ah}\}$) we need to prove the existence of traces on $\partial\Omega$ for functions in $\dom(\partial_\sharp)$. To this aim, define the following Dirichlet trace operators
\begin{equation}\label{eqn:deftrace}
	\Gamma^+ : H^1(\Omega) \to H^{\frac12}(\partial\Omega),\quad \Gamma^- : H_{loc}^1(\R^2\setminus\overline{\Omega}) \to H^{\frac12}(\partial\Omega).
\end{equation}
These linear operators are known to be bounded (see \cite[Thm. 3.37]{McLean}) and there exists continuous extension operators such that for $f\in H^{\frac12}(\partial\Omega)$ there holds
\[
	E^+f \in H^{1}(\Omega),\quad E^-f\in H^1(\R^2\setminus\overline{\Omega}) \quad\text{and}\quad \Gamma^\pm E^\pm f = f.
\]
Actually, the operator $\Gamma^+$ can be extended to functions in $\dom(\partial_\sharp)$ ($\sharp \in \{{\rm h},{\rm ah}\}$). This is the purpose of the following proposition.

\begin{lem}Let $\sharp \in \{{\rm h}, {\rm ah}\}$. The operator $\Gamma^+$ defined in \eqref{eqn:deftrace} extends into a linear bounded operator between $\dom(\partial_{\sharp})$ and $H^{-\frac12}(\partial\Omega)$.
\label{lem:extop}
\end{lem}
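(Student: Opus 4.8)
The natural approach is duality: to define $\Gamma^+ u$ for $u \in \dom(\partial_\sharp)$ as a distribution on $\partial\Omega$, I will pair it against smooth test functions on the boundary and control the pairing by $\|u\|_\sharp$. Concretely, fix $\sharp = \mathrm{h}$ (the case $\mathrm{ah}$ being symmetric). For $\varphi \in C^\infty(\partial\Omega)$, lift it to $\Phi := E^+\varphi \in H^1(\Omega)$ via the extension operator from \eqref{eqn:deftrace}, and consider the sesquilinear expression obtained from an integration by parts in the Wirtinger calculus: for $u \in C^\infty(\overline\Omega)$ one has a Green-type identity
\[
	\int_{\partial\Omega} (\overline{\mathbf n}\, u)\,\overline{\Phi}\, ds = c\Big( \int_\Omega (\partial_{\bar z} u)\,\overline{\Phi}\, dx + \int_\Omega u\, \overline{\partial_z \Phi}\, dx \Big)
\]
for an explicit constant $c$ (this is just $2\rmi\partial_{\bar z}$ integrated against $\Phi$, with the boundary term carrying the factor $\mathbf n = n_1 + \rmi n_2$). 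The right-hand side makes sense for any $u \in \dom(\partial_{\rm h})$ and any $\Phi \in H^1(\Omega)$, and is bounded by $C\|u\|_{\rm h}\|\Phi\|_{H^1(\Omega)} \leq C'\|u\|_{\rm h}\|\varphi\|_{H^{1/2}(\partial\Omega)}$ using boundedness of $E^+$.

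Next I would check this is well-defined: the left-hand side depends only on $\varphi = \Gamma^+\Phi$, not on the chosen extension, because if $\Phi \in H^1_0(\Omega)$ then the right-hand side vanishes (density of $C_0^\infty(\Omega)$ and the integration-by-parts identity with no boundary term). Therefore the map
\[
	\varphi \in C^\infty(\partial\Omega) \longmapsto c\Big(\int_\Omega (\partial_{\bar z}u)\,\overline{E^+\varphi}\,dx + \int_\Omega u\,\overline{\partial_z(E^+\varphi)}\,dx\Big) \cdot \big(\overline{\mathbf n}\big)^{-1}
\]
extends by the above bound to a bounded antilinear functional on $H^{1/2}(\partial\Omega)$, i.e. defines an element of $H^{-1/2}(\partial\Omega)$, which I declare to be $\Gamma^+ u$ (after multiplying/dividing by the smooth non-vanishing factor $\overline{\mathbf n}$, which is a bounded bijection on every $H^s(\partial\Omega)$ by Example \ref{exm:pseudo0} and Proposition \ref{prop:pseudoimpo}). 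Consistency with the classical trace on $C^\infty(\overline\Omega)$ — hence, by the density lemma just proved, with $\Gamma^+$ on $H^1(\Omega)$ — is exactly the Green identity above read backwards. Boundedness $\dom(\partial_{\rm h}) \to H^{-1/2}(\partial\Omega)$ is the displayed estimate. Uniqueness of the extension follows from density of $C^\infty(\overline\Omega)$ in $\dom(\partial_{\rm h})$.

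The main obstacle is making the Green identity rigorous at the level of regularity available: one does \emph{not} have $u \in H^1(\Omega)$, so the boundary integral $\int_{\partial\Omega}(\overline{\mathbf n}u)\overline\Phi\,ds$ is a priori only the formal object being \emph{defined}, and the only rigorous content is the identity on $C^\infty(\overline\Omega)$ together with the a priori estimate. So the argument must be structured as: (i) prove the identity for $u \in C^\infty(\overline\Omega)$ and $\Phi \in H^1(\Omega)$ by density in $\Phi$; (ii) observe the right-hand side is continuous in $u$ for the $\|\cdot\|_{\rm h}$-norm and in $\varphi$ for the $H^{1/2}$-norm; (iii) define $\Gamma^+ u$ by continuous extension and invoke the density lemma for well-definedness and uniqueness. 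One should also take care that $\partial_z(E^+\varphi) \in L^2(\Omega)$ — true since $E^+\varphi \in H^1(\Omega)$ — so that the pairing is legitimate; no elliptic regularity beyond the definitions of $\dom(\partial_\sharp)$ and standard trace theory is needed.
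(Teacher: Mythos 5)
Your proof is correct and follows essentially the same strategy as the paper: define $\Gamma^+ u$ for $u \in \dom(\partial_\sharp)$ by continuous extension from $C^\infty(\overline\Omega)$, using the Wirtinger integration-by-parts identity together with the extension operator $E^+$ to bound the pairing against $H^{1/2}(\partial\Omega)$ test functions by $\|u\|_\sharp$, and then invoking density. The only (cosmetic) difference is that the paper builds the factor $\mathbf{n}$ into the lift by setting $w = E^+(\mathbf{n} f)$ so the pairing directly yields $\Gamma^+ u$, whereas you lift $\varphi$ bare and correct by the smooth unimodular factor $\overline{\mathbf{n}}$ afterwards.
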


\begin{proof} Let $(v_n)_{n\in\N} \in C^\infty(\overline{\Omega})^\N$ be a sequence that converges to $v$ in the $\|\cdot\|_{\rm h}$-norm when $n\to +\infty$. Let us prove that $(\Gamma^+ v_n)_{n\in\N}$ has a limit in $H^{-\frac12}(\partial\Omega)$. First recall the integration by part formula
\[
	\frac12 \langle \Gamma^+u,\overline{\bf n} \Gamma^+w\rangle_{L^2(\partial\Omega)} = \langle \partial_{\bar z} u, w\rangle_{L^2(\Omega)} + \langle u,\partial_{z}w\rangle_{L^2(\Omega)}
\]
valid for any $u,w \in H^1(\Omega)$. Second, pick $f \in H^{\frac12}(\partial\Omega)$ and consider $w = E^+ ({\bf n} f) \in H^{1}(\Omega)$. There holds
\[
	\langle \Gamma^+(v_n - v_m),f\rangle_{L^2(\partial\Omega)} = 2 \langle \partial_{\bar z} (v_n - v_m), w\rangle_{L^2(\Omega)} + 2 \langle v_n-v_m,\partial_{z}w\rangle_{L^2(\Omega)}.
\]
In particular, we have
\begin{align*}
	\big|\langle \Gamma^+(v_n - v_m),f\rangle_{L^2(\partial\Omega)}\big| &\leq 2 \|\partial_{\bar z} (v_n - v_m)\|_{L^2(\Omega)}\|w\|_{L^2(\Omega)} + 2\|v_n-v_m\|_{L^2(\Omega)}\|\partial_z w\|_{L^2(\Omega)}\\
	& \leq 4 \|w\|_{H^1(\Omega)}\|v_n - v_m\|_{\rm h}\\
	& \leq 4 c_\Omega \|f\|_{H^{\frac12}(\partial\Omega)}\|v_n - v_m\|_{\rm h}\quad(\text{for some } c_\Omega > 0),
\end{align*}
where we have used that $E^+$ is a continuous linear map and that the multiplication operator by $\bf n$ is bounded from $H^{\frac12}(\partial\Omega)$ onto itself. When $n,m \to +\infty$ we obtain $\|\Gamma^+ (v_n - v_m)\|_{H^{-\frac12}(\partial\Omega)} \to 0$. In particular $(\Gamma^+v_n)_{n\in\N}$ is a Cauchy sequence in $H^{-\frac12}(\partial\Omega)$ thus converges to an element $g\in H^{-\frac12}(\partial\Omega)$ and we define $\Gamma^+ v := g$. Remark that the definition of $\Gamma^+v$ does not depend on the chosen sequence $(v_n)_{n\in\N}$ and that we have
\[
	\|\Gamma_+ v_n\|_{H^{-\frac12}(\partial\Omega)} \leq 4 c_\Omega \|v_n\|_{h}
\]
which implies, when $n\to +\infty$, that $\Gamma^+$ is bounded from $\dom(\partial_{\rm h})$ to $H^{-\frac12}(\partial\Omega)$. The proof for $\dom(\partial_{\rm ah})$ is handled similarly.
\end{proof}
\begin{rem}\label{rem:extop} If one picks $R>0$ such that $\overline{\Omega} \subset B(0,R) := \{x\in \mathbb{R}^2 : \|x\| < R\}$, one can prove that for $\star \in \{z,\overline{z}\}$, $\Gamma^-$ extends into a linear bounded operator between the space $\{u \in L^2(B(0,R)\setminus\Omega) : \partial_\star u \in L^2(B(0,R)\setminus\Omega)\}$ and $H^{-\frac12}(\partial\Omega)$. The proof goes along the same lines as the one of Lemma \ref{lem:extop}, using an extension operator $E^- : H^{\frac12}(\partial\Omega) \to H^1(B(0,R)\setminus\Omega)$ constructed such that for all $f\in H^{\frac12}(\partial\Omega)$, $E^-(f)|_{\partial B(0,R)} = 0$.
\end{rem}

\begin{rem} Pick $u\in\dom(\partial_{\rm ah})$ and $w\in H^1(\Omega)$. Note that by definition, the following Green's Formula holds
\begin{equation}
	\langle\partial_z u,w\rangle_{L^2(\Omega)} = - \langle u,\partial_{\bar z}w\rangle_{L^2(\Omega)} + \frac12\langle \overline{\bf n}\Gamma^+u,\Gamma^+w\rangle_{H^{-\frac12}(\partial\Omega),H^{\frac12}(\partial\Omega)}.
\label{eqn:Green}
\end{equation}

\end{rem}

The following elliptic regularity result is rather well known (see the analogous statement \cite[Lemma 2.4.]{BFVdBS17}).
\begin{lem}\label{lem:ellipregul} Let $\sharp \in \{{\rm h}, {\rm ah}\}$ and $u \in \dom(\partial_{\sharp})$. If $\Gamma^+ u \in H^{\frac12}(\partial\Omega)$ then $u \in H^1(\Omega)$. 
\end{lem}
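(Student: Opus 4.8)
The plan is to reduce to the case of a vanishing boundary trace by subtracting a harmless $H^1$-extension, and then to show that, with zero trace, extension by zero commutes with the Wirtinger operator, which puts us exactly in the situation covered by Lemma \ref{lem:equivfullspace}. I treat $\sharp = {\rm h}$; the case $\sharp = {\rm ah}$ is identical with $\partial_z$ and $\partial_{\bar z}$ interchanged (Lemma \ref{lem:equivfullspace} covers both). So let $u \in \dom(\partial_{\rm h})$ with $f := \Gamma^+ u \in H^{1/2}(\partial\Omega)$, and set $v := u - E^+ f$. Since $E^+ f \in H^1(\Omega) \subset \dom(\partial_{\rm h})$ and, by Lemma \ref{lem:extop}, $\Gamma^+$ on $\dom(\partial_{\rm h})$ extends the usual $H^1$-trace, we get $v \in \dom(\partial_{\rm h})$ with $\Gamma^+ v = f - f = 0$. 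It thus suffices to prove that $\Gamma^+ v = 0$ forces $v \in H^1(\Omega)$.

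First I would upgrade the integration-by-parts identity $\langle \partial_{\bar z} w_1, w_2\rangle_{L^2(\Omega)} + \langle w_1, \partial_z w_2\rangle_{L^2(\Omega)} = \tfrac12\langle \Gamma^+ w_1, \overline{\bf n}\Gamma^+ w_2\rangle_{L^2(\partial\Omega)}$, valid for $w_1, w_2 \in H^1(\Omega)$, to all $w_1 \in \dom(\partial_{\rm h})$ and $w_2 \in H^1(\Omega)$, the boundary term now read in the $H^{-1/2}(\partial\Omega)$--$H^{1/2}(\partial\Omega)$ duality. This follows from the density of $C^\infty(\overline{\Omega})$ in $\dom(\partial_{\rm h})$, the boundedness of $\Gamma^+ : \dom(\partial_{\rm h}) \to H^{-1/2}(\partial\Omega)$ from Lemma \ref{lem:extop}, and the fact that $\overline{\bf n}\Gamma^+ w_2 \in H^{1/2}(\partial\Omega)$. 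Applying this with $w_1 = v$ and $w_2 = \overline{\varphi}$ for an arbitrary $\varphi \in C_0^\infty(\R^2)$ (the conjugation converts the sesquilinear $L^2$-pairing into the bilinear distributional pairing) and using $\Gamma^+ v = 0$ yields $\int_\Omega (\partial_{\bar z} v)\,\varphi\,\dd x = -\int_\Omega v\,\partial_{\bar z}\varphi\,\dd x$. Writing $v_0$ and $g_0$ for the extensions by zero to $\R^2$ of $v$ and of $g := \partial_{\bar z} v \in L^2(\Omega)$, this is precisely the statement $\partial_{\bar z} v_0 = g_0$ in $\mathcal{D}'(\R^2)$, so $\partial_{\bar z} v_0 \in L^2(\R^2)$.

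By Lemma \ref{lem:equivfullspace}, $v_0 \in H^1(\R^2)$, hence $v = v_0|_\Omega \in H^1(\Omega)$ (in fact $v \in H^1_0(\Omega)$ by \cite[Prop. IX.18]{Bre}), and therefore $u = v + E^+ f \in H^1(\Omega)$. The only genuinely delicate point is the step $\partial_{\bar z} v_0 = g_0$ across $\partial\Omega$: it is exactly the vanishing of $\Gamma^+ v$ that suppresses the boundary contribution, and one must make sure the extended Green's identity is legitimate for elements of $\dom(\partial_{\rm h})$ that need not belong to any Sobolev space — this is what Lemma \ref{lem:extop} provides. Everything else is routine bookkeeping, and the argument is a close variant of the extension-by-zero computation already used in the proof that $C^\infty(\overline{\Omega})$ is dense in $\dom(\partial_\sharp)$.
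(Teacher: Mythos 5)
Your proof is correct and takes essentially the same route as the paper: subtract the $H^1$-extension $E^+(\Gamma^+ u)$ to reduce to zero trace, use the extended Green's formula (equivalently, Lemma \ref{lem:extop} and density of $C^\infty(\overline\Omega)$) to show that the extension by zero satisfies $\partial_{\bar z}v_0 = g_0 \in L^2(\R^2)$, and then invoke Lemma \ref{lem:equivfullspace}. The only cosmetic difference is that you state the extended Green's identity up front and then apply it, whereas the paper's proof unwinds the same density argument inline.
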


\begin{proof}  Let $u \in \dom(\partial_{\rm h})$ be such that $\Gamma^+ u \in H^{\frac12}(\partial\Omega)$ and set $v = u - E^+(\Gamma^+ u)$. Then, $\Gamma^+ v = 0$ and if $v \in H_0^1(\Omega)$ the result is proved. If $v_n \in C^\infty(\overline{\Omega})$ is a sequence converging to $v$ in the $\|\cdot\|_{\rm h}$-norm there holds $\Gamma^+ v_n \to 0$ in $H^{-\frac12}(\partial\Omega)$ by Lemma \ref{lem:extop}. In particular, it gives for any $w \in H^1(\Omega)$
\begin{align*}
	\langle v, \partial_z w \rangle_{L^2(\Omega)} &= \lim_{n\to +\infty}\Big(- \langle \partial_{\bar z}v_n, w \rangle_{L^2(\Omega)} + \frac12 \langle \Gamma^+v_n, \overline{\bf n}\Gamma^+w \rangle_{L^2(\partial\Omega)}\Big)\\
	& = - \langle \partial_{\bar z}v, w\rangle_{L^2(\Omega)}.
\end{align*}
Let $v_0$ (resp. $h_0$) be the extension of $v$ (resp. $h := \partial_{\bar z}v$) by zero to the whole $\R^2$. If $\varphi \in C_0^\infty(\R^2)$, there holds
\begin{align*}
	- \langle h_0,\overline{\varphi}\rangle_{\mathcal{D}'(\R^2),\mathcal{D}(\R^2)} = - \langle h,\varphi\rangle_{L^2(\Omega)} &= \langle v,{\partial_{z}\varphi}\rangle_{L^2(\Omega)}\\& =  \langle v_0, \partial_{\bar z}\overline{\varphi}\rangle_{\mathcal{D}'(\R^2),\mathcal{D}(\R^2)}\\
	& = -  \langle \partial_{\bar z }v_0,\overline{\varphi}\rangle_{\mathcal{D}'(\R^2),\mathcal{D}(\R^2)}.
\end{align*}
Thus $\partial_{\bar z}v_0 = h_0 \in L^2(\R^2)$ and by Lemma \ref{lem:equivfullspace}, $v_0 \in H^1(\R^2)$ and $v\in H_0^1(\Omega)$. The proof for $u \in \dom(\partial_{\rm ah})$ is handled similarly.
\end{proof}

\section{Bergman and Hardy spaces on $\Omega$}\label{sec:berghard}
We introduce $\cA_{\rm h}^2(\Omega)$ and $\cA_{\rm ah}^2(\Omega)$ the holomorphic and anti-holomorphic Bergman spaces on $\Omega$, respectively. They are defined as
\[
	\cA_{\rm h}^2(\Omega) := \{u \in Hol(\Omega)\cap L^2(\Omega)\},\quad \cA_{\rm ah}^2(\Omega) := \{u : \overline{u}\in \cA_{\rm h}^2(\Omega)\},
\]
where $Hol(\Omega)$ denotes the space of holomorphic functions in $\Omega$. The holomorphic and anti-holomorphic Hardy spaces, denoted $\cH_{\rm h}^2(\Omega)$ and $\cH_{\rm ah}^2(\Omega)$, respectively, are defined as
\begin{equation}
	\cH_{\rm h}^2(\Omega) := \{u\in \cA_{\rm h}^2(\Omega) : \Gamma^+u \in L^2(\partial\Omega)\},\quad \cH_{\rm ah}^2(\Omega) := \{u : \overline{u} \in \cH_{\rm h}^2(\Omega)\}.
	\label{eqn:Hardyspace}
\end{equation}
This section aims to describe explicitely the Bergman and Hardy spaces on $\Omega$ in terms of Cauchy integrals and Szeg\"o projectors that we define now.

For $f \in C^\infty(\partial\Omega)$ consider the Cauchy integrals defined for $z\in \C \setminus \partial\Omega$ by
\[
	\Phi_{\rm h}(f) (z) := \frac{1}{2\rmi \pi}\int_{\partial\Omega}\frac{f(\xi)}{\xi - z}d\xi,\quad 	\Phi_{\rm ah}(f) (z) := -\frac{1}{2 \rmi \pi} \int_{\partial\Omega}\frac{f(\xi)}{\overline{\xi} - \overline{z}}d\overline{\xi}.
\]
It is well-known (see \cite[\S 4.1.2.]{SV}) that $\Phi_{\rm h}(f)$ (resp. $\Phi_{\rm ah}(f)$) defines a holomorphic function (resp. anti-holomorphic function) in $\R^2\setminus\partial\Omega$.

The well-known Plemelj-Sokhotski formula (see \cite[Thm. 4.1.1]{SV}) state that for $f \in C^\infty(\partial\Omega)$ the functions $\Phi_{\rm h}(f)$ and $\Phi_{\rm ah}(f)$ have an interior and an exterior Dirichlet trace, denoted respectively $\gamma_0^+$ and $\gamma_0^-$, such that:
\begin{equation}\label{eqn:Plemeljdef}
	\gamma_0^\pm \Phi_{\rm h}(f) =  \pm \frac12 f + \frac12 S_{\rm h}f,\quad \gamma_0^\pm \Phi_{\rm ah}(f) = \pm  \frac12 f + \frac12 S_{\rm ah}f.
\end{equation}
Let $\sharp \in \{{\rm h},{\rm ah}\}$, note that by \cite[Theorem 3.1.]{Bell}, for $f\in C^\infty(\partial\Omega)$ we know that $\Phi_\sharp(f)|_{\Omega} \in C^\infty(\overline{\Omega})$ as well as $\Phi_\sharp(f)|_{\R^2\setminus\overline{\Omega}}\in C^\infty(\R^2\setminus\Omega)$. In particular, the traces $\gamma_0^\pm\Phi_\sharp(f)$ coincide with $\Gamma^\pm \Phi_\sharp(f)$, where $\Gamma^\pm$ are the trace operators defined in Lemma \ref{lem:extop} and Remark \ref{rem:extop}.
\begin{defin}\label{def:szeproj} We define the Szeg\"o projectors in $C^\infty(\partial\Omega)$ by
\begin{equation}\label{eqn:Plemeljdef}
	\Pi_{\rm h}^\pm := \pm \Gamma^\pm \Phi_{\rm h},\quad \Pi_{\rm ah}^\pm := \pm\Gamma^\pm \Phi_{\rm ah}.
\end{equation}
\end{defin}

\begin{prop} Let $s\in\R$ and $\sharp \in \{{\rm h},{\rm ah}\}$. The Szeg\"o projectors $\Pi_{\sharp}^\pm$ extend uniquely into bounded linear operators from $H^s(\partial\Omega)$ onto itself. Moreover, $\Pi_\sharp^\pm$ are projectors and $\Pi_\sharp^+ + \Pi_\sharp^-= 1 $.

%As operators in $L^2(\partial\Omega)$, there holds
%\[
%	(\Pi_{\rm h}^+)^* = \overline{\tau}\Pi_{\rm ah}^- \tau,\quad (\Pi_{\rm ah}^+)^*= \tau \Pi_{\rm h}^- \overline{\tau}.
%\]
\label{prop:proj_conti}
\end{prop}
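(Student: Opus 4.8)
The plan is to reduce everything to the mapping properties of the Cauchy singular integral operators $S_{\rm h}, S_{\rm ah}$ established in Proposition \ref{prop:mapcauchy}, together with the Plemelj--Sokhotski formula \eqref{eqn:Plemeljdef}. First I would record that, by Definition \ref{def:szeproj} and the Plemelj formula, for $f\in C^\infty(\partial\Omega)$ there holds
\[
	\Pi_{\rm h}^+ f = \tfrac12(f + S_{\rm h}f),\qquad \Pi_{\rm h}^- f = \tfrac12(f - S_{\rm h}f),
\]
and likewise with $S_{\rm ah}$ for the anti-holomorphic projectors. Indeed $\Pi_{\rm h}^+ = \Gamma^+\Phi_{\rm h} = \gamma_0^+\Phi_{\rm h} = \tfrac12 f + \tfrac12 S_{\rm h}f$, using that the interior trace of the Cauchy integral of a smooth density coincides with $\Gamma^+$ (as noted before Definition \ref{def:szeproj}); and $\Pi_{\rm h}^- = -\Gamma^-\Phi_{\rm h} = -\gamma_0^-\Phi_{\rm h} = -(-\tfrac12 f + \tfrac12 S_{\rm h}f) = \tfrac12 f - \tfrac12 S_{\rm h}f$.

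The identity $\Pi_\sharp^+ + \Pi_\sharp^- = 1$ on $C^\infty(\partial\Omega)$ is then immediate from these two formulas (the $S_\sharp$ terms cancel). For the boundedness and extension statement, I would invoke Proposition \ref{prop:mapcauchy}: $S_{\rm h}$ (resp. $S_{\rm ah}$) is a periodic pseudo-differential operator of order $0$ on $\partial\Omega$, hence extends to a bounded operator $H^s(\partial\Omega)\to H^s(\partial\Omega)$ for every $s\in\R$. Since the identity is trivially bounded on $H^s(\partial\Omega)$, the operators $\tfrac12(1\pm S_\sharp)$ are bounded on $H^s(\partial\Omega)$; as $C^\infty(\partial\Omega)$ is dense in $H^s(\partial\Omega)$, each $\Pi_\sharp^\pm$ extends uniquely to a bounded operator on $H^s(\partial\Omega)$, and the relation $\Pi_\sharp^+ + \Pi_\sharp^- = 1$ passes to the closure. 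It remains to prove that these extensions are idempotent, i.e. $(\Pi_\sharp^\pm)^2 = \Pi_\sharp^\pm$, which by density is equivalent to proving it on $C^\infty(\partial\Omega)$, and by the formulas above is equivalent to $S_\sharp^2 = 1$ on $C^\infty(\partial\Omega)$ (then $\tfrac14(1\pm S_\sharp)^2 = \tfrac14(1 \pm 2S_\sharp + S_\sharp^2) = \tfrac14(2 \pm 2S_\sharp) = \tfrac12(1\pm S_\sharp)$).

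The main obstacle is therefore the identity $S_\sharp^2 = 1$. I would prove it for $S_{\rm h}$ (the case $S_{\rm ah}$ following by conjugation, since $S_{\rm ah}f = \overline{S_{\rm h}\overline f}$). Two routes are available. The cleaner one is operator-theoretic: for $f\in C^\infty(\partial\Omega)$ the function $\Phi_{\rm h}(f)$ is holomorphic in $\Omega$ and smooth up to the boundary, so its interior trace $\Pi_{\rm h}^+ f = \tfrac12(f+S_{\rm h}f)$ already lies in the boundary values of holomorphic functions; applying the Plemelj formula to the Cauchy integral of that boundary trace and using that the Cauchy integral reproduces holomorphic functions (so $S_{\rm h}$ acts as the identity on traces of functions holomorphic in $\Omega$, while it acts as minus the identity on traces of functions holomorphic in $\R^2\setminus\overline\Omega$ vanishing at infinity) yields $S_{\rm h}(\tfrac12(f+S_{\rm h}f)) = \tfrac12(f+S_{\rm h}f)$, i.e. $S_{\rm h}f + S_{\rm h}^2 f = f + S_{\rm h}f$, hence $S_{\rm h}^2 f = f$. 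Equivalently, one argues directly that $\Pi_{\rm h}^+$ is a projection onto the boundary values of the holomorphic Hardy space by splitting $f = \gamma_0^+\Phi_{\rm h}(f) - \gamma_0^-\Phi_{\rm h}(f)$ into an interior-holomorphic and an exterior-holomorphic piece and noting the Cauchy integral annihilates the exterior piece and reproduces the interior one. Either way this is the classical fact that the Cauchy/Szeg\H{o} operator is an idempotent; it can also be cited from \cite[\S 4.1]{SV} or \cite{Bell}. Once $S_\sharp^2 = 1$ is in hand on $C^\infty(\partial\Omega)$, it extends by density and continuity to all $H^s(\partial\Omega)$, and the proof is complete.
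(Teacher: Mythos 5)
Your proof is correct and follows essentially the same route as the paper: rewrite $\Pi_\sharp^\pm = \tfrac12(1 \pm S_\sharp)$ via the Plemelj--Sokhotski formula, invoke Proposition \ref{prop:mapcauchy} for boundedness on every $H^s(\partial\Omega)$, and reduce idempotence to the identity $S_\sharp^2 = 1$. The only difference is that you sketch a proof of $S_{\rm h}^2 = 1$ whereas the paper simply cites \cite[Eqn.\ (4.10)]{SV}; both are acceptable.
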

\begin{proof}Remark that for $\sharp\in\{{\rm h},{\rm ah}\}$ and $f \in C^\infty(\partial\Omega)$, there holds
\[
	\Pi_{\sharp}^\pm f = \frac12 f \pm \frac12 S_{\sharp}f.
\]
By Proposition \ref{prop:mapcauchy}, $\Pi_\sharp^\pm$ extends into a bounded linear operator from $H^s(\partial\Omega)$ onto itself for all $s\in\R$.

Let $s\in\mathbb{R}$ and $f\in H^s(\partial\Omega)$. A fundamental fact is that $S_{\rm h}^2f = f$ (see \cite[Eqn. (4.10)]{SV}), in particular it implies that $S_{\rm ah}^2f = f$. Hence, we obtain
\begin{align*}
	(\Pi_\sharp^\pm)^2 &= (\frac12 \pm \frac12S_{\sharp})(\frac12 \pm \frac12S_{\sharp})\\&=\frac14 + \frac14 S_{\sharp}^2 \pm \frac12 S_\sharp\\
	&= \frac12 \pm \frac12S_\sharp\\
	& = \Pi_\sharp^\pm
\end{align*}
Hence $\Pi_\sharp^\pm$ are projectors and one easily checks that $\Pi_\sharp^+ + \Pi_\sharp^- = 1$.
\end{proof}

%\begin{rem} Let $s\in \R$ and $\sharp \in \{{\rm h},{\rm ah}\}$. The following direct sum decomposition holds
%\[
%H^s(\partial\Omega) = \ker \Pi_\sharp^+ \dotplus \ker\Pi_\sharp^-.
%\]
%\end{rem}
The main goal of this section is to prove the following description of the Bergman and Hardy spaces. As we will see further on in Proposition \ref{prop:extphi}, this description relies on an extension of the operators $\Phi_\sharp$ to Sobolev spaces on the boundary $\partial\Omega$ ($\sharp \in \{{\rm h},{\rm ah}\}$).
\begin{thm}\label{thm:berhar} Let $\sharp \in \{{\rm h},{\rm ah}\}$. The Bergman spaces satisfy
\[
	\cA_\sharp^2(\Omega) = \{\Phi_\sharp(f) : f \in H^{-\frac12}(\partial\Omega), \Pi_\sharp^- f = 0\}.
\]
The Hardy spaces verify
\[
	\cH_\sharp^2(\Omega) = \{\Phi_\sharp(f) : f \in L^{2}(\partial\Omega), \Pi_\sharp^- f = 0\}.
\]
\end{thm}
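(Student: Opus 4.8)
The plan is to prove the two identities by double inclusion, exploiting the mapping properties of the Cauchy integrals $\Phi_\sharp$ together with the Plemelj--Sokhotski formulas \eqref{eqn:Plemeljdef}. The first step is to record that for $f\in C^\infty(\partial\Omega)$ the function $\Phi_\sharp(f)$ restricted to $\Omega$ is holomorphic (resp. anti-holomorphic) and lies in $C^\infty(\overline\Omega)$, and hence in $\cA_\sharp^2(\Omega)$; and that its interior trace equals $\Pi_\sharp^+ f = \tfrac12 f + \tfrac12 S_\sharp f$. Similarly, $\Phi_\sharp(f)$ restricted to $\R^2\setminus\overline\Omega$ has interior/exterior traces $\Pi_\sharp^- f = \tfrac12 f - \tfrac12 S_\sharp f$ (up to sign). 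The key structural observation is that $\Pi_\sharp^- f = 0$ holds precisely when the exterior part of $\Phi_\sharp(f)$ vanishes, i.e. when $f$ is already a trace of an (anti-)holomorphic function on the inside with no jump to the outside. I would isolate this as the engine driving both equalities.

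\textbf{Extension of $\Phi_\sharp$.} Before proving either inclusion I would establish (this is what Proposition~\ref{prop:extphi} referenced in the statement is for) that $\Phi_\sharp$ extends to a bounded operator $H^{-\frac12}(\partial\Omega)\to\cA_\sharp^2(\Omega)$ and $L^2(\partial\Omega)\to\cH_\sharp^2(\Omega)$, with the Plemelj formulas $\Gamma^+\Phi_\sharp(f) = \Pi_\sharp^+ f$ persisting for $f$ in these spaces. This is done by density of $C^\infty(\partial\Omega)$ in $H^s(\partial\Omega)$ together with Proposition~\ref{prop:proj_conti} (continuity of $\Pi_\sharp^\pm$ on all $H^s$) and the trace bound from Lemma~\ref{lem:extop}: if $f_n\to f$ in $H^{-\frac12}(\partial\Omega)$ then $\Phi_\sharp(f_n)$ is Cauchy in $\dom(\partial_\sharp)$ (using that $\Phi_\sharp(f_n)$ is (anti-)holomorphic so $\partial_\sharp\Phi_\sharp(f_n)=0$, reducing the $\|\cdot\|_\sharp$-norm to the $L^2$-norm, controlled via the exterior trace and an interior estimate), hence converges to some $\Phi_\sharp(f)\in\cA_\sharp^2(\Omega)$. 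When $f\in L^2(\partial\Omega)$ one gets $\Gamma^+\Phi_\sharp(f)=\Pi_\sharp^+f\in L^2(\partial\Omega)$, so $\Phi_\sharp(f)\in\cH_\sharp^2(\Omega)$.

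\textbf{The two inclusions.} For ``$\supseteq$'' in both statements: if $\Pi_\sharp^- f = 0$ then by the extended Plemelj formulas the exterior trace of $\Phi_\sharp(f)$ vanishes and the interior trace equals $f$; boundedness from the previous step places $\Phi_\sharp(f)$ in $\cA_\sharp^2(\Omega)$ (resp. $\cH_\sharp^2(\Omega)$ when $f\in L^2(\partial\Omega)$, since then $\Gamma^+\Phi_\sharp(f)=\Pi_\sharp^+f=f\in L^2(\partial\Omega)$). For ``$\subseteq$'': given $u\in\cA_\sharp^2(\Omega)$, set $f := \Gamma^+ u \in H^{-\frac12}(\partial\Omega)$ (well-defined by Lemma~\ref{lem:extop}). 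One must show $u = \Phi_\sharp(f)$ and $\Pi_\sharp^- f = 0$. The natural route is a Cauchy-type reproducing formula: for $u$ holomorphic and smooth up to the boundary, Cauchy's integral formula gives $u(z) = \Phi_{\rm h}(\Gamma^+ u)(z)$ for $z\in\Omega$ while $\Phi_{\rm h}(\Gamma^+ u)(z) = 0$ for $z$ outside $\overline\Omega$ — the latter being exactly $\Pi_{\rm h}^- f = 0$. For general $u\in\cA_\sharp^2(\Omega)$ one approximates by dilations/exhaustion $u_\rho(z):=u(\rho z)$ (using simple connectedness and a conformal identification, or directly an interior exhaustion $\Omega_\rho\uparrow\Omega$), which are smooth up to $\partial\Omega_\rho$, apply Cauchy there, and pass to the limit; the trace $\Gamma^+u_\rho\to\Gamma^+u$ must be controlled in $H^{-\frac12}(\partial\Omega)$, and here the boundedness of $\Phi_\sharp$ on $H^{-\frac12}$ from the preceding step closes the argument. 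When moreover $\Gamma^+u\in L^2(\partial\Omega)$, i.e. $u\in\cH_\sharp^2(\Omega)$, the same $f$ lies in $L^2(\partial\Omega)$, giving the Hardy statement. The anti-holomorphic case follows by conjugation, since $\cA_{\rm ah}^2(\Omega)=\overline{\cA_{\rm h}^2(\Omega)}$, $\cH_{\rm ah}^2(\Omega)=\overline{\cH_{\rm h}^2(\Omega)}$, $\Phi_{\rm ah}(f)=\overline{\Phi_{\rm h}(\overline f)}$, and $\Pi_{\rm ah}^\pm f = \overline{\Pi_{\rm h}^\pm \overline f}$.

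\textbf{Main obstacle.} The delicate point is the ``$\subseteq$'' direction: verifying the reproducing identity $u=\Phi_\sharp(\Gamma^+u)$ for an arbitrary Bergman-space element whose boundary trace only lives in $H^{-\frac12}(\partial\Omega)$, rather than in $L^2$ or better. One cannot apply Cauchy's formula directly; the approximation-by-exhaustion argument requires a uniform $H^{-\frac12}$-control of the boundary traces $\Gamma^+ u_\rho$ and a careful interchange of the limit with the Cauchy integral, leaning on the continuity of $\Phi_\sharp : H^{-\frac12}(\partial\Omega)\to\cA_\sharp^2(\Omega)$ and on the density argument that built it. Everything else is bookkeeping with the Plemelj formulas and the already-established boundedness of $S_\sharp$ and $\Pi_\sharp^\pm$.
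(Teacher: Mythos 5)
Your proposal is correct, but the key ``$\subseteq$'' direction follows a genuinely different route from the paper's. You approximate a Bergman-space element by holomorphic functions that are smooth up to the boundary, obtained by conformal transplantation to $\D$ followed by dilation, apply the exact Cauchy integral formula to each approximant $u_\rho$, and pass to the limit using the $H^{-\frac12}$-boundedness of $\Gamma^+$ (Lemma~\ref{lem:extop}) and of $\Phi_\sharp$ (Proposition~\ref{prop:extphi}). The paper instead invokes the density of $C^\infty(\overline\Omega)$ in $\dom(\partial_\sharp)$ established in Section~\ref{sec:maxc-rop}, so the approximants $v_n$ are smooth but not holomorphic; it therefore applies the Cauchy--Pompeiu identity, which carries a $\bar\partial$-correction term $\pv\big(\frac1{x_1+\rmi x_2}\big)*(\partial_{\bar z}v_n\,\mathds{1}_\Omega)$, and checks that this term vanishes in $\cD'(\R^2)$ since $\partial_{\bar z}v_n\to \partial_{\bar z}v=0$ in $L^2(\Omega)$. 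The paper's version avoids the Riemann mapping theorem at this step (which you do need to make sense of $u(\rho z)$ for a non-starshaped $\Omega$) and reuses the general density lemma already in hand, at the price of handling the principal-value convolution in the space of distributions carefully; yours dispenses with the singular-kernel correction term but relies on the conformal map, on Kellogg-type boundary regularity of $\Phi^{-1}$, and on verifying $u_\rho\to u$ in $L^2(\Omega)$. You correctly isolate the genuine bottleneck (passing to the limit with boundary traces only in $H^{-\frac12}$), and your reduction of the Hardy-space identity to the Bergman one by observing that the same $f=\Gamma^+u$ lies in $L^2(\partial\Omega)$ matches the paper's treatment exactly.
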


\subsection{Potential theory of the Wirtinger derivatives}

In this paragraph we prove the following proposition.
\begin{prop}\label{prop:extphi} Let $\sharp \in \{{\rm h}, {\rm ah}\}$ and $s\in\{-\frac12,0,\frac12\}$. The operator $\Phi_{\sharp}$ extends uniquely into a bounded operator from $H^{s}(\partial\Omega)$ to $H^{s+\frac12}(\Omega)$ also denoted $\Phi_\sharp$.
\end{prop}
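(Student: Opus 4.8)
\textbf{Plan for Proposition \ref{prop:extphi}.}

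The plan is to establish the $H^s(\partial\Omega) \to H^{s+\frac12}(\Omega)$ mapping bound by interpolation between the three specified endpoint exponents $s \in \{-\frac12, 0, \frac12\}$, but since these are exactly the values needed later, I will instead treat them one at a time using elliptic regularity and the Plemelj--Sokhotski formulas already recorded in \eqref{eqn:Plemeljdef}. Fix $\sharp \in \{{\rm h}, {\rm ah}\}$; I work with $\sharp = {\rm h}$, the other case being identical after conjugation. The starting observation is that for $f \in C^\infty(\partial\Omega)$ the function $u := \Phi_{\rm h}(f)|_\Omega$ is holomorphic, hence satisfies $\partial_{\bar z} u = 0$ in $\Omega$ and a fortiori $-4\partial_z\partial_{\bar z} u = 0$, so $u$ is harmonic in $\Omega$; moreover by \eqref{eqn:Plemeljdef} its interior Dirichlet trace is $\Gamma^+ u = \Pi_{\rm h}^+ f = \tfrac12 f + \tfrac12 S_{\rm h} f$. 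Since by Proposition \ref{prop:mapcauchy} the operator $\Pi_{\rm h}^+$ is bounded on $H^s(\partial\Omega)$ for every $s \in \R$, the trace of $u$ is controlled by $\|f\|_{H^s(\partial\Omega)}$.

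The middle case $s = 0$ is then the classical statement that the harmonic extension (equivalently, solving $-\Delta u = 0$ with $u = \Pi_{\rm h}^+ f \in L^2(\partial\Omega)$) lands in $H^{1/2}(\Omega)$ with norm control; one can quote the standard estimate for the Dirichlet problem with $L^2$ boundary data (see e.g.\ \cite[Thm. 3.37 and its corollaries]{McLean}) or, more elementarily, observe that $u$ is anti-holomorphic-free so $\|u\|_{H^{1/2}(\Omega)}$ is governed by the $H^0(\partial\Omega)$-norm of its trace via the boundedness of the solution operator of the Dirichlet Laplacian from $H^s(\partial\Omega)$ to $H^{s+1/2}(\Omega)$ for $-\tfrac12 \le s \le \tfrac12$. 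The case $s = \frac12$ is the genuine elliptic regularity statement: $\Gamma^+ u \in H^{1}(\partial\Omega) \subset H^{1/2}(\partial\Omega)$, so $u \in H^1(\Omega)$ — and here Lemma \ref{lem:ellipregul} applies directly, since $u \in \dom(\partial_{\rm h})$ (indeed $\partial_{\bar z} u = 0 \in L^2(\Omega)$) and $\Gamma^+ u \in H^{1/2}(\partial\Omega)$; the quantitative bound $\|u\|_{H^1(\Omega)} \lesssim \|f\|_{H^{1/2}(\partial\Omega)}$ follows by tracking constants through that lemma (or by the open mapping theorem applied to the composition $f \mapsto u$, which is closed). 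The case $s = -\frac12$ is dual/symmetric: the trace $\Pi_{\rm h}^+ f$ lies in $H^{-1/2}(\partial\Omega)$ and the Dirichlet solution operator sends $H^{-1/2}(\partial\Omega)$ to $H^0(\Omega) = L^2(\Omega)$, which one can see either from the $s=0$ and $s=\frac12$ statements by interpolation/duality, or by a direct transposition argument testing against $H^1(\Omega)$-functions using the Green formula \eqref{eqn:Green}. In each case the density of $C^\infty(\partial\Omega)$ in $H^s(\partial\Omega)$ gives the unique bounded extension.

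The main obstacle is the bookkeeping needed to make the three cases uniform and, in particular, to justify that in every case $\Phi_{\rm h}(f)|_\Omega$ really does coincide with "the" harmonic (holomorphic) function with trace $\Pi_{\rm h}^+ f$ once $f$ is only a Sobolev-class density rather than smooth: one must check that the naive Cauchy-integral formula and the harmonic-extension operator define the same map on $C^\infty(\partial\Omega)$ and then pass to the limit consistently in all three topologies. Once that identification is in place — which is where I would be most careful — the estimates themselves reduce to Lemma \ref{lem:ellipregul}, Proposition \ref{prop:mapcauchy}, and the standard mapping properties of the Dirichlet Laplacian, so no new analytic input beyond what the excerpt already provides is required.
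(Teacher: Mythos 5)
Your outline for $s=\tfrac12$ matches the paper exactly (apply Lemma \ref{lem:ellipregul} to $\Phi_{\rm h}(f)\in\dom(\partial_{\rm h})$, whose trace $\Pi_{\rm h}^+f$ lies in $H^{\frac12}(\partial\Omega)$, then invoke the closed graph theorem). The difference — and the gap — is in the other two exponents, and especially in how the base case is chosen.

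The paper proves $s=-\tfrac12$ \emph{first}, and this is where all the real work lives. The mechanism is Lemma \ref{lem:map}: one shows $\Phi_{\rm h} = -\tfrac{\rmi}{2}(\overline{\bf t}\,\Gamma N_{\rm ah})'$ where $N_{\rm ah}$ is the Newtonian-potential operator from Lemma \ref{lem:newtonpot}, i.e.\ $\Phi_{\rm h}$ is the dual adjoint of a concrete operator $L^2(\Omega)\to H^{\frac12}(\partial\Omega)$; boundedness $H^{-\frac12}(\partial\Omega)\to L^2(\Omega)$ then falls out by duality. Your two proposed substitutes for this step do not obviously work. Interpolating between $s=0$ and $s=\tfrac12$ can only reach $s\in[0,\tfrac12]$, not $s=-\tfrac12$. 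The "direct transposition testing against $H^1(\Omega)$-functions using \eqref{eqn:Green}" is the right flavour, but to bound $\|\Phi_{\rm h}(f)\|_{L^2(\Omega)}$ you must control $\langle \Phi_{\rm h}(f),v\rangle$ for arbitrary $v\in L^2(\Omega)$, and to convert that pairing into a boundary pairing with $f$ via Green's formula you need, for every $v\in L^2(\Omega)$, a $w\in H^1(\Omega)$ with $\partial_z w=v$ (or $\partial_{\bar z}w=v$) and quantitative control on $\|w\|_{H^1}$ by $\|v\|_{L^2}$. That existence is exactly the Newtonian-potential content of Lemma \ref{lem:newtonpot}, which your sketch never invokes; without it the transposition does not close.

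For $s=0$ you appeal to solvability of the Dirichlet problem with $L^2$ boundary data landing in $H^{\frac12}(\Omega)$. That is a genuine endpoint statement (not covered by the McLean reference you cite, which is about traces of $H^1$ functions) and is not automatic even on a $C^\infty$ domain without some work. The paper deliberately avoids this by getting $s=0$ for free from $s=\pm\tfrac12$ by real interpolation. Since you are already proving both endpoints, there is no reason to invoke an independent and delicate $L^2$-Dirichlet-data theorem for the middle case. I would replace your $s=0$ paragraph with a one-line interpolation remark, and replace the vague $s=-\tfrac12$ paragraph with an explicit adjoint computation built on Lemma \ref{lem:newtonpot} in the spirit of Lemma \ref{lem:map}. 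Your concern about whether $\Phi_{\rm h}$ on rough $f$ coincides with "the holomorphic function with trace $\Pi_{\rm h}^+f$" is also resolved more cleanly by the paper's route: once $\Phi_{\rm h}$ is known bounded into $\dom(\partial_{\rm h})$, its trace is defined via Lemma \ref{lem:extop} and the identity $\Gamma^+\Phi_{\rm h}=\Pi_{\rm h}^+$ passes to the limit by density, so there is no separate "harmonic-extension versus Cauchy-integral" identification to make.
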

In order to prove Proposition \ref{prop:extphi}, we will need a few lemma. Let us start by defining fundamental solutions of the Wirtinger operators $\partial_{\rm h}$ and $\partial_{\rm ah}$:
\[
	\varphi_{\rm h}(x)  = \frac{1}{\pi (x_1 + i x_2)},\quad \varphi_{\rm ah}(x) = \frac{1}{ \pi(x_1 - i x_2)}.
\]
\begin{lem} Let $\sharp\in \{{\rm h}, {\rm ah}\}$. The linear map
\[
	N_\sharp : u\in L^2(\Omega) \mapsto \varphi_{\sharp}\ast u_0
\]
is bounded from $L^2(\Omega)$ to $H_{loc}^1(\R^2)$. Here $u_0$ denotes the extension of $u$ by zero to the whole $\R^2$.
\label{lem:newtonpot}
\end{lem}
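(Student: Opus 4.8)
The plan is to realize $N_\sharp$ as a composition of a convolution operator on the full plane with zero-extension and restriction, and to estimate the full-plane piece using the Fourier transform together with elliptic regularity of the Wirtinger operators. I will write the argument for $\sharp = {\rm h}$; the case $\sharp = {\rm ah}$ is identical after complex conjugation (note $\varphi_{\rm ah} = \overline{\varphi_{\rm h}}$).

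\textbf{Step 1: Reduce to a bounded ball.} Fix $R>0$ with $\overline{\Omega}\subset B(0,R)$. It suffices to show that $u\mapsto (\varphi_{\rm h}\ast u_0)|_{B(0,2R)}$ is bounded from $L^2(\Omega)$ to $H^1(B(0,2R))$, since $H^1(B(0,2R))\hookrightarrow H^1_{loc}(\R^2)$ trivially (any compact set is covered by finitely many such balls, by scaling/translation). Because $u_0$ is supported in $\overline{\Omega}\subset B(0,R)$, one can freely replace $\varphi_{\rm h}$ by $\chi\varphi_{\rm h}$ for a fixed cutoff $\chi\in C_0^\infty(\R^2)$ equal to $1$ on $B(0,4R)$: on $B(0,2R)$ the difference $(1-\chi)\varphi_{\rm h}\ast u_0$ is given by integration of $u_0(y)$ against $(1-\chi)(x-y)\varphi_{\rm h}(x-y)$, a kernel that is smooth and bounded (with all derivatives bounded) on the relevant region $x\in B(0,2R)$, $y\in B(0,R)$, hence this term is bounded $L^2(\Omega)\to C^\infty(\overline{B(0,2R)})\subset H^1(B(0,2R))$ by Cauchy--Schwarz and differentiation under the integral sign. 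So it remains to bound $u\mapsto \psi\ast u_0$ on $\R^2$ with $\psi := \chi\varphi_{\rm h}\in L^1(\R^2)$ compactly supported.

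\textbf{Step 2: $L^2\to L^2$ and the derivative estimate.} Since $\psi\in L^1(\R^2)$ and $u_0\in L^2(\R^2)$ with $\|u_0\|_{L^2(\R^2)}=\|u\|_{L^2(\Omega)}$, Young's inequality gives $\|\psi\ast u_0\|_{L^2(\R^2)}\le \|\psi\|_{L^1}\|u\|_{L^2(\Omega)}$. For the first derivatives, the key point is that $\varphi_{\rm h}$ is a fundamental solution of $\partial_{\bar z}$, i.e. $\partial_{\bar z}\varphi_{\rm h} = \delta_0$ in $\cD'(\R^2)$ (this is the classical Cauchy--Pompeiu identity; recall $\varphi_{\rm h}(x) = \frac{1}{\pi(x_1+\rmi x_2)} = \frac{1}{\pi z}$ and $\partial_{\bar z}(1/z)=\pi\delta_0$). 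Hence in $\cD'(\R^2)$,
\[
	\partial_{\bar z}(\varphi_{\rm h}\ast u_0) = (\partial_{\bar z}\varphi_{\rm h})\ast u_0 = u_0\in L^2(\R^2),
\]
with $\|\partial_{\bar z}(\varphi_{\rm h}\ast u_0)\|_{L^2(\R^2)}=\|u\|_{L^2(\Omega)}$. For $\partial_z$, one has $\partial_z(\varphi_{\rm h}\ast u_0) = (\partial_z\varphi_{\rm h})\ast u_0$, and $\partial_z\varphi_{\rm h} = \partial_z(1/(\pi z)) = -\frac{1}{\pi z^2}$ (away from $0$) is a Calderón--Zygmund kernel; the associated principal-value convolution operator (the Beurling transform, up to constants) is bounded on $L^2(\R^2)$ — this follows at once from the Fourier side, where it is multiplication by a bounded function, namely $\overline{z}/z$ in frequency. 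Thus $\|\partial_z(\varphi_{\rm h}\ast u_0)\|_{L^2(\R^2)}\le C\|u\|_{L^2(\Omega)}$ as well. Combined with $\partial_1 = \partial_z+\partial_{\bar z}$ and $\partial_2 = \rmi(\partial_z-\partial_{\bar z})$, we conclude $\varphi_{\rm h}\ast u_0\in H^1(\R^2)$ locally with the asserted bound; applying Step 1 gives $N_{\rm h}:L^2(\Omega)\to H^1_{loc}(\R^2)$ bounded.

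\textbf{Main obstacle.} The only genuinely non-routine ingredient is the $\partial_z$-derivative estimate: $\partial_z\varphi_{\rm h}$ is not locally integrable, so $\partial_z(\varphi_{\rm h}\ast u_0)$ must be interpreted as a singular integral operator and one needs its $L^2$-boundedness (the Calderón--Zygmund / Beurling transform bound, most cleanly seen via Plancherel since the Fourier multiplier has modulus $1$). Everything else — the cutoff argument, Young's inequality for the $L^2$ norm, and the clean identity $\partial_{\bar z}(\varphi_{\rm h}\ast u_0)=u_0$ — is elementary. An alternative, self-contained route that avoids invoking Calderón--Zygmund theory by name: once one knows $w := \varphi_{\rm h}\ast u_0\in L^2(\R^2)$ with $\partial_{\bar z}w = u_0\in L^2(\R^2)$, Lemma \ref{lem:equivfullspace} (applied after a further localization, or to $\chi w$) already yields $w\in H^1_{loc}(\R^2)$, and the closed graph theorem upgrades this to a bounded map; I would likely present this shorter variant in the paper.
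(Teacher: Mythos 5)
Your proof is correct and follows essentially the same strategy as the paper: localize the kernel to obtain the $L^2$ estimate by Young's inequality, and control the derivatives on the Fourier side using the explicit formula for $\widehat{\varphi_{\rm h}}$. The paper gets the full gradient bound in a single Plancherel step by observing $|k|^2\,|\widehat{\varphi_{\rm h}}(k)|^2 \equiv 1/\pi^2$; your split into $\partial_{\bar z}$ (trivial, via $\partial_{\bar z}\varphi_{\rm h}=\delta_0$) and $\partial_z$ (Beurling transform) is the same bounded-multiplier estimate, just phrased in the language of Calder\'on--Zygmund theory rather than computed directly. Your closing alternative --- first get $w = \varphi_{\rm h}\ast u_0 \in L_{loc}^2$ with $\partial_{\bar z} w = u_0 \in L^2(\R^2)$, then localize and invoke Lemma \ref{lem:equivfullspace} --- is a genuinely shorter route that better reuses the paper's earlier machinery; the only point to check there is that the commutator term $(\partial_{\bar z}\chi)\, w$ produced by the localization is controlled by the $L_{loc}^2$ bound on $w$ you already established via Young's inequality.
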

%\begin{rem} The space $H_{loc}^1(\R^2)$ is defined as
%\[
%	H_{loc}^1(\R^2) := \{u : \text{for all compact set } K \subset \R^2,\ u|_K \in H^1(K)\}.
%\]
%Its topology is given by the family of semi-norms $(p_n)_{n\in\N}$ defined for $f \in H_{loc}^1(\R^2)$ as
%\[
%	p_n(f) := \|u\|_{H^1(B(0,n))},
%\]
%where $B(0,n)$ is the disk centered in $0$ of radius $n\in\N$.
%\end{rem}
\begin{proof} Let us prove it for $\sharp = {\rm h}$ the proof for $\sharp = {\rm ah}$ being similar. In the space of distributions $\mathcal{D}'(\R^2)$, there holds
\begin{equation}\label{eqn:fundsolz}
	\partial_{\bar z} \varphi_{\rm h} = \delta_0,
\end{equation}
where $\delta_0$ is the delta-Dirac distribution.

Now, for $u$ in the Schwartz space $\mathcal{S}(\R^2)$ recall that the Fourier transform of $u$ is defined as
\[
	\widehat{u}(k) :=  \int_{\R^2} f(x) e^{-2\rmi\pi \langle x,k\rangle_{\R^2}} dx,\quad \text{for all } k\in\R^2
\]
and $\widehat{u}\in \mathcal{S}(\R^2)$. The Fourier transform extends to the space of tempered distribution $\mathcal{S}'(\R^2)$ and as $\delta_0 \in \mathcal{S}'(\R^2)$, the Fourier transform of \eqref{eqn:fundsolz} yields
\[
	\widehat{\varphi_{\rm h}}(k) = \frac{1}{\pi \rmi (k_1 + \rmi k_2)},\quad k = (k_1,k_2)\in\R^2\setminus\{(0,0)\}.
\]
Let $K$ be a compact subset of $\R^2$ and take $u \in L^2(\Omega)$. We extend $u$ by zero to $\R^2$ and denote this extension $u_0 \in L^2(\R^2)$.
\begin{align*}
	\|\varphi_{\rm h}\ast u_0\|_{H^1(K)}^2 &\leq \|\varphi_{\rm h}\ast u_0\|_{L^2(K)}^2 + \int_{\R^2} |k|^2|(\widehat{\varphi_{\rm h}\ast u_0})(k)|^2 dk\\&= \|\varphi_{\rm h}\ast u_0\|_{L^2(K)}^2 + \int_{\R^2} |k|^2|\widehat{\varphi_{\rm h}}(k)\widehat{u_0}(k)|^2dk\\& = \|\varphi_{\rm h}\ast u_0\|_{L^2(K)}^2 + \frac{1}{\pi^2}\int_{\R^2} |\widehat{u_0}(k)|^2dk\\& = \|\varphi_{\rm h}\ast u_0\|_{L^2(K)}^2 + \frac{1}{\pi^2}\|u\|_{L^2(\Omega)}^2.
\end{align*}
Now, let $R>0$ be such that $K \subset \{ x \in \R^2 : |x| < R\}$ and $\overline{\Omega} \subset \{ x \in \R^2 : |x| < R\}$. Consider a cut-off function $\chi \in C_0^\infty([0,+\infty))$ such that
\[
	\chi(\rho) = 1\text{ whenever } 0\leq\rho < 2R,\quad \chi(\rho) = 0 \text{ whenever } \rho > 3R.
\]
Define the function $u_\chi$ as
\[
	u_\chi (x) := \int_{\R^2}\chi(|x-y|) \varphi_{\rm h}(x-y) u_0(y) dy.
\]
As defined, $u_\chi|_{K}\equiv (\varphi_{\rm h}\ast u_0)|_{K}$. Hence, we get
\begin{align*}
	\|\varphi_{\rm h}\ast u_0\|_{L^2(K)} = \|u_\chi\|_{L^2(K)} &\leq \|u_\chi\|_{L^2(\R^2)}\\&\leq \|\chi(|\cdot|)\varphi_{\rm h}\|_{L^1(\R^2)} \|u\|_{L^2(\Omega)},
\end{align*}
where we have used Young's inequality because $\chi(|\cdot|)\varphi_{\rm h} \in L^1(\R^2)$. Indeed, there holds
\[
	\|\chi(|\cdot|)\varphi_{\rm h}\|_{L^1(\R^2)} \leq \frac1\pi\int_{B(0,3R)} \frac{1}{|x|}dx = 6 R.
\]
In particular, there exists $c_K > 0$, such that
\[
	\|\varphi_{\rm h}\ast u_0\|_{H^1(K)} \leq c_K \|u\|_{L^2(\Omega)}.
\]
Hence, for any compact $K\subset \R^2$, $N_\sharp$ is a bounded linear operator from $L^2(\Omega)$ to $H^1(K)$ and the proposition is proved.
\end{proof}
Next, we recall that the Dirichlet trace on $\partial\Omega$ of a function in $H_{loc}^1(\R^2)$ can be defined as
\[
	\Gamma : H_{loc}^1(\R^2) \to H^{\frac12}(\partial\Omega)
\]
and is a bounded linear operator from $H_{loc}^1(\R^2)$ to $H^{\frac12}(\partial\Omega)$ (see \cite[Thm. 3.37]{McLean}).

Moreover, for $s \in [0,\ell]$, we introduce ${\bf t}(s) := \gamma_1'(s) + \rmi \gamma_2'(s)$ the expression of the tangent vector in the complex plane at the point $\gamma_1(s) + \rmi \gamma_2(s)$.
\begin{lem} The dual adjoints of $({\bf t} \Gamma N_{\rm h})$ and $(\overline{{\bf t}}\Gamma N_{\rm ah})$, denoted $({\bf t} \Gamma N_{\rm h})'$ and $(\overline{{\bf t}}\Gamma N_{\rm ah})'$ respectively, are bounded linear maps from $H^{-\frac12}(\partial\Omega)$ to $L^2(\Omega)$. Moreover if $f\in C^\infty(\partial\Omega)$, in $L^2(\Omega)$ there holds:
\[
\Phi_{\rm ah }(f) = \frac{\rmi}2({\bf t}\Gamma N_{\rm h})'(f),\quad \Phi_{\rm h}(f) = -\frac{\rmi}2(\overline{{\bf t}}\Gamma N_{\rm ah})'(f).
\]
\label{lem:map}
\end{lem}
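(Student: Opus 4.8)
The plan is to split the proof into a boundedness part and an explicit identification of the two dual adjoints. For the boundedness, fix $R>0$ with $\overline{\Omega}\subset B(0,R)$. By Lemma \ref{lem:newtonpot}, $N_{\rm h}$ is bounded from $L^2(\Omega)$ into $H^1(B(0,R))$ and the Dirichlet trace $\Gamma$ is bounded from $H^1(B(0,R))$ into $H^{\frac12}(\partial\Omega)$. Since $\gamma$ is a $C^\infty$ arc-length parametrization, the tangent field ${\bf t}=\gamma_1'+\rmi\gamma_2'$ lies in $C^\infty(\partial\Omega)$, so, exactly as for $H_{\bf n}$ in Proposition \ref{prop:pseudocommut}, multiplication by ${\bf t}$ belongs to $\Psi^0_{\partial\Omega}$ and is bounded on $H^{\frac12}(\partial\Omega)$. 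Composing, $({\bf t}\Gamma N_{\rm h})\colon L^2(\Omega)\to H^{\frac12}(\partial\Omega)$ is bounded, and likewise $(\overline{{\bf t}}\Gamma N_{\rm ah})$; passing to dual adjoints and using $(L^2(\Omega))'\cong L^2(\Omega)$ and $(H^{\frac12}(\partial\Omega))'\cong H^{-\frac12}(\partial\Omega)$ gives the asserted boundedness from $H^{-\frac12}(\partial\Omega)$ into $L^2(\Omega)$.

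To identify $({\bf t}\Gamma N_{\rm h})'$, I would test against $v\in C_0^\infty(\Omega)$, which is dense in $L^2(\Omega)$, so that $({\bf t}\Gamma N_{\rm h})'$ is characterized by $\langle v,({\bf t}\Gamma N_{\rm h})'(f)\rangle_{L^2(\Omega)}=\langle ({\bf t}\Gamma N_{\rm h})(v),f\rangle_{L^2(\partial\Omega)}$ for all such $v$ and all $f\in C^\infty(\partial\Omega)$. For $v\in C_0^\infty(\Omega)$ the extension $v_0$ equals $v$ and its support stays away from $\partial\Omega$, so $N_{\rm h}(v)=\varphi_{\rm h}\ast v\in C^\infty(\R^2)$ and its trace is the genuine restriction $(\Gamma N_{\rm h}(v))(x)=\int_\Omega\varphi_{\rm h}(x-y)\,v(y)\,dy$ on $\partial\Omega$, the kernel being smooth and bounded on $\partial\Omega\times\mathrm{supp}(v)$. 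Fubini's theorem then yields, for $f\in C^\infty(\partial\Omega)$,
\[
	\langle ({\bf t}\Gamma N_{\rm h})(v),f\rangle_{L^2(\partial\Omega)}
	=\int_\Omega v(y)\Big(\int_0^\ell\frac{{\bf t}(s)\,\overline{f(\gamma(s))}}{\pi(\gamma(s)-y)}\,ds\Big)dy .
\]
Using ${\bf t}(s)\,ds=d\gamma(s)$, the inner integral becomes a contour integral over $\partial\Omega$ equal to $2\rmi\,\Phi_{\rm h}(\overline{f})(y)$, while a short conjugation check gives $\Phi_{\rm h}(\overline{f})=\overline{\Phi_{\rm ah}(f)}$ on $\Omega$; hence
\[
	\langle ({\bf t}\Gamma N_{\rm h})(v),f\rangle_{L^2(\partial\Omega)}
	=2\rmi\int_\Omega v(y)\,\overline{\Phi_{\rm ah}(f)(y)}\,dy
	=\big\langle v,\,-2\rmi\,\Phi_{\rm ah}(f)\big\rangle_{L^2(\Omega)}.
\]
This identifies $({\bf t}\Gamma N_{\rm h})'(f)=-2\rmi\,\Phi_{\rm ah}(f)$, i.e. $\Phi_{\rm ah}(f)=\frac{\rmi}{2}({\bf t}\Gamma N_{\rm h})'(f)$, first for $v\in C_0^\infty(\Omega)$ and then, by density and continuity of both sides in $v$, as an identity in $L^2(\Omega)$. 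The identity $\Phi_{\rm h}(f)=-\frac{\rmi}{2}(\overline{{\bf t}}\Gamma N_{\rm ah})'(f)$ follows in the same way, now with $\varphi_{\rm ah}$ in place of $\varphi_{\rm h}$, with ${\bf t}$ replaced by $\overline{{\bf t}}$ and $\overline{{\bf t}(s)}\,ds=d\overline{\gamma(s)}$, and with the conjugation identity $\Phi_{\rm ah}(\overline{f})=\overline{\Phi_{\rm h}(f)}$.

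The one genuinely delicate point---and the reason to test only against $v\in C_0^\infty(\Omega)$---is that for a general $v\in L^2(\Omega)$ the potential $N_{\rm h}(v)$ is only known to lie in $H^1_{loc}(\R^2)$, so its boundary trace need not be an absolutely convergent integral and neither the pointwise trace formula nor the interchange of integrals would be immediate; keeping $\mathrm{supp}(v)$ at positive distance from $\partial\Omega$ removes all of this, the general case following by density. Everything else is just careful bookkeeping of the sesquilinearity of the $L^2$-pairings and of the antilinear duality identifications, which is what produces the constants $\pm\frac{\rmi}{2}$.
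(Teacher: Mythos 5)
Your argument is correct and follows essentially the same route as the paper: boundedness via Lemma \ref{lem:newtonpot}, the trace theorem, and smoothness of ${\bf t}$, and the explicit identity via Fubini and recognition of the Cauchy integral. The one place you diverge is in testing only against $v\in C_0^\infty(\Omega)$ and then densifying; the paper applies Fubini directly to a general $v\in L^2(\Omega)$, which also works because $\int_{\partial\Omega}\int_\Omega |f(x)|\,|v(y)|\,|x-y|^{-1}\,dy\,ds(x)$ converges absolutely (the inner boundary integral grows only logarithmically in $\mathrm{dist}(y,\partial\Omega)$, which is square-integrable over $\Omega$), but your restriction to compactly supported test functions is a tidy way to sidestep having to check this.
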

\begin{proof} Thanks to Lemma \ref{lem:newtonpot} and the mapping properties of $\Gamma$ we know that $\Gamma N_\sharp$ is a bounded linear map from $L^2(\Omega)$ to $H^{\frac12}(\partial\Omega)$ (for $\sharp \in \{{\rm h}, {\rm ah}\}$). As $\Omega$ is smooth, ${\bf t} \in C^\infty(\partial\Omega)$ and $\overline{{\bf t}} \in C^\infty(\partial\Omega)$. In particular the multiplication operators by ${\bf t}$ and $\overline{{\bf t}}$ are bounded and invertible in $H^{\frac12}(\Omega)$. Hence, their dual adjoints satisfy the expected mapping property.

Now, pick $f\in C^\infty(\partial\Omega)$ and $v\in L^2(\Omega)$. Denoting by $v_0$ the extension of $v$ by zero to the whole $\R^2$ and using Fubini's theorem, there holds
\begin{align*}
	\langle ({\bf t}\Gamma N_{\rm h})'f, v\rangle_{L^2(\Omega)} &= \langle f, {\bf t} \Gamma N_{\rm h}v\rangle_{H^{-\frac12}(\partial\Omega),H^{\frac12}(\partial\Omega)}\\
	&= \langle f, {\bf t} \Gamma N_{\rm h}v\rangle_{L^2(\partial\Omega)}\\
	& = \int_{x\in\partial\Omega}\int_{y\in\R^2} \frac{f(x)\overline{v_0(y)}\overline{{\bf t}(x)}}{\pi\big((x_1-\rmi x_2) -(y_1 - \rmi y_2)\big)}dyds(x)\\
	& = \int_{y\in \R^2}  \frac1\pi\left(\int_{s=0}^\ell \frac{f(\gamma(s))(\gamma_1'(s) - \rmi \gamma_2'(s))}{(\gamma_1(s) - \rmi \gamma_2(s)) - (y_1 - \rmi y_2)}ds\right)\overline{v_0(y)} dy\\
	&=\int_{y\in \mathbb{R}^2} \Big(\frac1{\pi}\int_{\xi\in\partial \Omega} \frac{f(\xi)}{\overline{\xi} - (y_1 - \rmi y_2)}d\overline{\xi}\Big)\overline{v_0(y)} dy\\
	& = \langle - 2 \rmi \Phi_{\rm ah}(f),v\rangle_{L^2(\Omega)}.
\end{align*}
The proof for $(\overline{{\bf t}}\Gamma N_{\rm ah})'$ goes along the same lines, which concludes the proof of this lemma.
\end{proof}
For further use, we still denote $\Phi_{\rm ah}$ and $\Phi_{\rm h}$ the operators $\frac{\rmi}2({\bf t} \Gamma N_{\rm h})'$ and $-\frac{\rmi}2(\overline{{\bf t}}\Gamma N_{\rm ah})'$. Now, for $\sharp \in \{{\rm h},{\rm ah}\}$, when considering the operators
\[
	\Phi_{\sharp} : \big(C^\infty(\partial\Omega),\|\cdot\|_{H^{-\frac12}(\partial\Omega)}\big) \rightarrow \big(\dom(\partial_\sharp), \|\cdot\|_\sharp\big) 
\]
they are bounded operators because for any $f \in C^\infty(\partial\Omega)$, $\Phi_{\rm h}(f)$ and $\Phi_{\rm ah}(f)$ are holomorphic and anti-holomorphic in $\Omega$, respectively.
The density of $C^\infty(\partial\Omega)$ in $H^{-\frac12}(\partial\Omega)$ yields for each operator a unique extension to $H^{-\frac12}(\partial\Omega)$ which coincide with the previous one. In particular, for any $f \in H^{-\frac12}(\partial\Omega)$, $\Phi_\sharp(f) \in \dom(\partial_\sharp)$ and $\partial_\sharp \Phi_\sharp(f) = 0$.
%\begin{rem}
%Remark that for $s = -\frac12$, Proposition \ref{prop:extphi} is a consequence of Lemma \ref{lem:map} and the density of $C^\infty(\partial\Omega)$ in $H^{-\frac12}(\partial\Omega)$. In particular, for $f\in H^{-\frac12}(\partial\Omega)$, $\Phi_\sharp(f) \in \dom{(\partial_\sharp)}$. \label{rem:prop1/2}
%\end{rem}

Now, we have collected all the tools to prove Proposition \ref{prop:extphi}.
\begin{proof}[Proof of Proposition \ref{prop:extphi}] For $s = -\frac12$, Proposition \ref{prop:extphi} holds true, because of Lemma \ref{lem:map} and the density of $C^\infty(\partial\Omega)$ in $H^{-\frac12}(\partial\Omega)$.
Let us prove it for $s = \frac12$. Remark that $\Phi_\sharp(f) \in \dom(\partial_\sharp)$ so if $f\in H^{\frac12}(\partial\Omega)$ we also have $\Gamma^+\Phi_\sharp(f) = \Pi_\sharp^+ f \in H^{\frac12}(\partial\Omega)$ by Proposition \ref{prop:proj_conti}. Hence, by Lemma \ref{lem:ellipregul}, $\Phi_\sharp(f) \in H^1(\Omega)$.

Let us use the closed graph theorem and take a sequence of functions $f_n \in H^{\frac12}(\partial\Omega)$ such that $f_n \to f$ in the $H^{\frac12}(\partial\Omega)$-norm. Assume also that $\Phi_\sharp(f_n) \to u \in H^1(\Omega)$ where the convergence holds in the $H^1(\Omega)$-norm.

Because of the continuous embedding of $H^{\frac12}(\partial\Omega)$ into $H^{-\frac12}(\partial\Omega)$, $f_n \to f$ also in the $H^{-\frac12}(\partial\Omega)$-norm. In particular, by Proposition \ref{prop:extphi} for $s = -\frac12$, $\Phi_\sharp(f_n) \to \Phi_\sharp(f)$ in $L^2(\Omega)$. Consequently, the equality $u = \Phi_\sharp(f)$ holds not only in $L^2(\Omega)$ but also in $H^1(\Omega)$ and by the closed graph theorem, $\Phi_\sharp$ is a continuous linear map between $H^{\frac12}(\partial\Omega)$ and $H^1(\Omega)$.

The result for $s = 0$ holds by (real) interpolation theory (see \cite[Prop. 2.1.62. \& Prop. 2.3.11. \& Prop. 2.4.3.]{SauSch}).
\end{proof}

\subsection{Explicit description of the Bergman and Hardy spaces}
Let us prove Theorem \ref{thm:berhar}, starting with the following proposition concerning the Bergman spaces.
\begin{prop}\label{prop:propberg} Let $\sharp \in \{{\rm h},{\rm ah}\}$. There holds:
\begin{equation}\label{eqn:propberg}
	\cA_\sharp^2(\Omega) = 	\{ \Phi_\sharp (f) : f\in H^{-\frac12}(\partial\Omega) \text{ such that } \Pi_\sharp^- f = 0\},\quad \sharp\in\{{\rm h},{\rm ah}\}.
\end{equation}
Moreover, for all $f\in H^{-\frac12}(\partial\Omega)$ there holds
\[
	\Phi_\sharp(f) = \Phi_\sharp(\Pi_\sharp^+ f).
\]
\end{prop}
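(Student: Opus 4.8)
The plan is to establish the two inclusions defining the identity \eqref{eqn:propberg} and then deduce the projection formula $\Phi_\sharp(f) = \Phi_\sharp(\Pi_\sharp^+ f)$ almost for free. Throughout I take $\sharp = {\rm h}$, the anti-holomorphic case being entirely analogous by conjugation. For the inclusion $\supseteq$, observe that for any $f \in H^{-\frac12}(\partial\Omega)$ the function $\Phi_{\rm h}(f)$ is holomorphic in $\Omega$ (this holds for $f \in C^\infty(\partial\Omega)$ and passes to the limit by the continuity statement of Proposition \ref{prop:extphi} applied with $s = -\frac12$, since $H^{\frac12}(\Omega) \hookrightarrow L^2(\Omega)$ and holomorphy is preserved under $L^2_{loc}$-limits), so $\Phi_{\rm h}(f) \in \cA_{\rm h}^2(\Omega)$; the condition $\Pi_{\rm h}^- f = 0$ plays no role here. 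For the reverse inclusion $\subseteq$, I start from $u \in \cA_{\rm h}^2(\Omega)$, i.e. $u$ holomorphic and $L^2$, which means precisely $u \in \dom(\partial_{\rm h})$ with $\partial_{\rm h} u = \partial_{\bar z} u = 0$. By Lemma \ref{lem:extop} the trace $f := \Gamma^+ u \in H^{-\frac12}(\partial\Omega)$ is well-defined, and the natural candidate is to show $u = \Phi_{\rm h}(f)$ with $\Pi_{\rm h}^- f = 0$.

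The heart of the argument is a Cauchy-type reproducing formula: for $u$ holomorphic in $\Omega$ with $H^{-\frac12}$ boundary trace, one has $u = \Phi_{\rm h}(\Gamma^+ u)$ in $L^2(\Omega)$, together with $\gamma_0^- \Phi_{\rm h}(\Gamma^+ u) = 0$, i.e. the exterior trace of the Cauchy integral of a holomorphic function vanishes. For $f \in C^\infty(\partial\Omega)$ with $\Gamma^- E^- f$-type extension holomorphic outside — more to the point, for $u \in C^\infty(\overline{\Omega})$ holomorphic — this is the classical Cauchy integral formula $\Phi_{\rm h}(\Gamma^+ u)|_\Omega = u$ and $\Phi_{\rm h}(\Gamma^+ u)|_{\R^2 \setminus \overline{\Omega}} = 0$, and Plemelj--Sokhotski \eqref{eqn:Plemeljdef} then gives $\Pi_{\rm h}^- (\Gamma^+ u) = -\gamma_0^-\Phi_{\rm h}(\Gamma^+ u) = 0$. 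To pass to general $u \in \cA_{\rm h}^2(\Omega)$, I would approximate: take $u_r(x) := u((1-r)x)$ (or use a dilation/exhaustion of $\Omega$ adapted via the smoothness of $\partial\Omega$) to get $u_r \in C^\infty(\overline{\Omega})$ holomorphic, apply the smooth case, and pass to the limit using the continuity of $\Gamma^+ : \dom(\partial_{\rm h}) \to H^{-\frac12}(\partial\Omega)$ (Lemma \ref{lem:extop}), the continuity of $\Phi_{\rm h} : H^{-\frac12}(\partial\Omega) \to L^2(\Omega)$ (Proposition \ref{prop:extphi}), and the continuity of $\Pi_{\rm h}^- : H^{-\frac12}(\partial\Omega) \to H^{-\frac12}(\partial\Omega)$ (Proposition \ref{prop:proj_conti}); one must check $u_r \to u$ in $\dom(\partial_{\rm h})$, which follows since $\partial_{\bar z} u_r = 0$ and $u_r \to u$ in $L^2(\Omega)$ by dominated convergence / standard dilation estimates on a smooth domain. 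This yields both $u = \Phi_{\rm h}(\Gamma^+ u)$ and $\Pi_{\rm h}^-(\Gamma^+ u) = 0$, completing $\subseteq$.

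Finally, the identity $\Phi_\sharp(f) = \Phi_\sharp(\Pi_\sharp^+ f)$ for arbitrary $f \in H^{-\frac12}(\partial\Omega)$: since $\Pi_\sharp^+ + \Pi_\sharp^- = 1$, it suffices to show $\Phi_\sharp(\Pi_\sharp^- f) = 0$. Now $\Phi_\sharp(f) \in \cA_\sharp^2(\Omega)$ by the $\supseteq$ inclusion just proved, so by the $\subseteq$ inclusion $\Phi_\sharp(f) = \Phi_\sharp(\Gamma^+ \Phi_\sharp(f)) = \Phi_\sharp(\Pi_\sharp^+ f)$, using $\Gamma^+ \Phi_\sharp(f) = \Pi_\sharp^+ f$ from Definition \ref{def:szeproj}; equivalently one checks directly that $g := \Pi_\sharp^- f$ satisfies $\Pi_\sharp^+ g = \Pi_\sharp^+ \Pi_\sharp^- f = 0$ (the ranges of $\Pi_\sharp^\pm$ are complementary, $\Pi_\sharp^+\Pi_\sharp^- = 0$ since $S_\sharp^2 = 1$), and $\Phi_\sharp(g)$ is holomorphic with trace $\Pi_\sharp^+ g = 0$, hence lies in $H^1(\Omega)$ by Lemma \ref{lem:ellipregul}, is holomorphic with vanishing Dirichlet trace, and therefore vanishes identically. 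I expect the main obstacle to be the rigorous justification of the limiting procedure for the Cauchy reproducing formula at the level of $\cA_{\rm h}^2(\Omega)$ — specifically producing smooth holomorphic approximants $u_r \in C^\infty(\overline{\Omega})$ converging to $u$ in the graph norm $\|\cdot\|_{\rm h}$ on a general smooth simply connected domain (here the Riemann mapping theorem, or a careful dilation argument near $\partial\Omega$, does the job), everything else being a bookkeeping of the continuity statements already assembled in the preceding sections.
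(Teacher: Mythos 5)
Your proof is correct in substance, but the route you take for the inclusion $\cA_\sharp^2(\Omega)\subset\mathcal{E}_\sharp$ is genuinely different from the paper's, and it is worth noting what each buys.

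You start from the classical Cauchy integral formula for a \emph{holomorphic} $u\in C^\infty(\overline\Omega)$, obtaining $u=\Phi_{\rm h}(\Gamma^+u)$ in $\Omega$, $\Phi_{\rm h}(\Gamma^+u)=0$ outside, and $\Pi_{\rm h}^-\Gamma^+u=0$ via Plemelj--Sokhotski, and you then pass to general $u\in\cA_{\rm h}^2(\Omega)$ by constructing a family $u_r\in C^\infty(\overline\Omega)\cap\mathrm{Hol}(\Omega)$ with $u_r\to u$ in the graph norm. You correctly flag that the bare dilation $u((1-r)x)$ fails for non-star-shaped $\Omega$ and that the fix is to conjugate the dilation by a Riemann map $f:\D\to\Omega$; this does work (smoothness of $f$ up to $\overline\D$ follows from Kellogg--Warschawski, $|f'|$ is bounded above and below, and $g_r:=g((1-r)\cdot)\to g$ in $L^2(\D)$ for $g\in\cA_{\rm h}^2(\D)$ by the monomial expansion of the Bergman space), so the argument closes. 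The paper instead starts from an \emph{arbitrary} $u\in C^\infty(\overline\Omega)$ and derives a Cauchy--Pompeiu-type distributional identity
\[
\Phi_{\rm h}(\Gamma^+u)-u \;=\;\frac1\pi\,\pv\Bigl(\tfrac1{x_1+\rmi x_2}\Bigr)*\bigl(\partial_{\bar z}u\,\mathds 1_\Omega\bigr)\quad\text{in }\cD'(\Omega),
\]
then uses the already-established density of $C^\infty(\overline\Omega)$ in $\dom(\partial_{\rm h})$ to take $v_n\to v\in\cA_{\rm h}^2(\Omega)$ with $\partial_{\bar z}v_n\to 0$ and conclude $v=\Phi_{\rm h}(\Gamma^+v)$, deducing $\Pi_{\rm h}^-\Gamma^+v=0$ by applying $\Gamma^+$. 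Two advantages of the paper's route over yours: it does not require constructing \emph{holomorphic} approximants (the density lemma produces generic smooth ones for free), and it does not invoke the Riemann mapping theorem, hence does not use simple connectedness at this step. Your proof of the inclusion $\supseteq$ and of the identity $\Phi_\sharp(f)=\Phi_\sharp(\Pi_\sharp^+f)$ — either by bootstrapping the two inclusions, or directly by showing $\Phi_\sharp(\Pi_\sharp^-f)$ is holomorphic in $H^1_0(\Omega)$ via Lemma~\ref{lem:ellipregul} and hence zero — matches the content of the paper's argument and is sound.
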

\begin{proof} Denote $\mathcal{E}_\sharp$ the set on the right-hand side of \eqref{eqn:propberg}. We prove it for $\sharp = {\rm h}$, the proof for $\sharp = {\rm ah}$ being similar.
\paragraph{\underline{Inclusion $\mathcal{E}_{\rm h} \subset \cA_{\rm h}^2(\Omega)$}}
Let $ u = \Phi_{\rm h}(f) \in \mathcal{E}_{\rm h}$, with $f \in H^{-\frac12}(\partial\Omega)$ such that $\Pi_{\rm h}^- f = 0$. By Proposition \ref{prop:extphi}, $\Phi_{\rm h}$ maps $H^{-\frac12}(\partial\Omega)$ to $L^2(\Omega)$ thus $u \in L^2(\Omega)$. Moreover, there holds $\partial_{\bar z}u = 0$ which implies that $u \in \mathcal{A}_{\rm h}^2 (\Omega)$.
%Let $f \in H^{-\frac12}(\partial\Omega)$ there holds $\partial_{\bar z} \Phi_{\rm h} (f) = 0$ in $\cD'(\Omega)$. Indeed, pick a sequence $(f_n)_{n\in\N}$ of $C^\infty(\partial\Omega)$ smooth functions which converges to $f$ in the $H^{\frac12}(\partial\Omega)$-norm when $n\to+\infty$. For all $\varphi \in C_0^\infty(\Omega)$, there holds
%\begin{align*}
%	\langle \partial_{\bar z}\Phi_{\rm h}(f),\varphi\rangle_{\cD'(\Omega),\cD(\Omega)}  = -\langle \Phi_{\rm h}(f),\partial_{z}\varphi\rangle_{\cD'(\Omega),\cD(\Omega)} &= -\langle \Phi_{\rm h}(f),\partial_{z}\varphi\rangle_{L^2(\Omega)}\\& = - \lim_{n\to+\infty}\langle \Phi_{\rm h}(f_n),\partial_{z}\varphi\rangle_{L^2(\Omega)}\\& = \lim_{n\to+\infty}\langle \partial_{\bar z}\Phi_{\rm h}(f_n)\varphi\rangle_{L^2(\Omega)}\\& = 0,
%\end{align*}
%where we have used that for all $n\in\N$, $\Phi_{\rm h}(f_n)$ is holomorphic in $\Omega$ (see \cite[\S 4.1.2.]{SV}). By Lemma \ref{lem:map}, $\Phi_{\rm h} (f) \in L^2(\Omega)$ and is holomorphic in $\Omega$ which proves this inclusion.

\paragraph{\underline{Inclusion $\cA_{\rm h}^2(\Omega) \subset \mathcal{E}_{\rm h}$}} For $u \in C^\infty(\overline{\Omega})$, $x \in \Omega$ and $\varepsilon > 0$ sufficiently small there holds
\begin{align*}
	0 &= \frac1\pi\int_{\Omega\setminus B(x,\varepsilon)} \partial_{\bar z}\Big(\frac1{ (x_1 +\rmi x_2) - (y_1 +\rmi y_2)}\Big) u(y) dy\\& = -\frac1\pi\int_{\Omega\setminus B(x,\varepsilon)} \frac{\partial_{\bar z} u(y)}{(x_1 +\rmi x_2) - (y_1 +\rmi y_2)}dy\\& \quad \quad+ \frac1{2\pi}\int_{\partial\Omega}\frac{u(y)}{(x_1+\rmi x_2) - (y_1+\rmi y_2)} \textbf{n}(y) ds(y)\\&\quad\quad\quad + \frac1{2\pi}\int_{\partial B(x,\varepsilon)}\frac{u(y)}{(x_1+\rmi x_2) - (y_1+\rmi y_2)} \frac{(x_1 +\rmi x_2) - (y_1 +\rmi y_2)}{|y-x|} ds(y)\\
	& := -A + B + C.
\end{align*}
However, we have
\[
	C = \frac1{2\pi}\int_{0}^{2\pi}u(x + \varepsilon(\cos t,\sin t)) d t \longrightarrow u(x),\quad \text{when }\varepsilon \to 0.
\]
By definition, if $\gamma : [0,\ell] \to \partial\Omega$ is a smooth arc-length parametrization of $\partial\Omega$ there holds
\begin{align*}
	B & = -\frac{\rmi}{2\pi} \int_{0}^\ell \frac{u(\gamma(t))}{(x_1+\rmi x_2) - (\gamma_1(t) + \rmi \gamma_2(t))}(\gamma_1'(t) + \rmi \gamma_2'(t)) dt\\
	& = -\frac{1}{2\rmi \pi} \int_{\partial\Omega} \frac{u(\xi)}{\xi- (x_1 + \rmi x_2)} d\xi =  - \Phi_{\rm h}(\Gamma^+ u)(x).
\end{align*}
In particular, we obtain
\begin{align}
	\nonumber\Phi_{\rm h}(\Gamma^+ u)(x) =\ & \frac1{2\pi}\int_{0}^{2\pi} u(x+\varepsilon (\cos(t),\sin(t))) dt\\& \nonumber\qquad\qquad\qquad - \frac1{\pi}\int_{\Omega\setminus B(x,\varepsilon)} \frac{\partial_{\bar z} u(y)}{(x_1 + \rmi x_2) - (y_1 + \rmi y_2)} dy\\ \nonumber=\ &\frac1{2\pi}\int_{0}^{2\pi} u(x+\varepsilon (\cos(t),\sin(t))) dt\\\label{eqn:decdist}&\qquad\qquad\qquad - \frac1{\pi}\int_{\R^2} \frac{\mathds{1}_{\R^2\setminus B(0,\varepsilon)}(x-y)}{(x_1 + \rmi x_2) - (y_1 + \rmi y_2)} (\partial_{\bar z} u(y)\mathds{1}_{\Omega}(y)) dy.
\end{align}
Note that the linear form on $C_0^\infty(\R^2)$ defined by
\[
	p.v.\Big(\frac1{x_1+\rmi x_2}\Big) := \varphi \in C_0^\infty(\R^2) \mapsto   \lim_{\varepsilon \to 0} \int_{\R^2}\frac{\mathds{1}_{\R^2 \setminus B(0,\varepsilon)}(x)}{x_1 +\rmi x_2}\varphi(x) dx \in \C
\]
belongs to $\cD'(\R^2)$. Remark that $(\partial_{\bar z}u \mathds{1}_{\Omega} )\in \cD'(\R^2)$ and has compact support. Hence, $p.v.(\frac1{x_1+\rmi x_2}) * (\partial_{\bar z}u \mathds{1}_{\Omega}) \in \cD'(\R^2)$ and taking the duality pairing with $\varphi \in C_0^\infty(\Omega)$ in \eqref{eqn:decdist} and $\varepsilon \to 0$ we get
\begin{equation}
	\langle\Phi_{\rm h}(\Gamma^+ u ) - u,\varphi\rangle_{\cD'(\Omega),\cD(\Omega)} = \frac1{\pi}\langle p.v.\Big(\frac1{x_1 +\rmi x_2}\Big)*(\partial_{\bar z} u \mathds{1}_{\Omega}),\varphi\rangle_{\cD'(\R^2),\cD(\R^2)}.
	\label{eqn:cvddistrib}
\end{equation}
Now, remark that $\cA_{\rm h}^2(\Omega) \subset \dom(\partial_{\rm h})$ and pick a sequence of $C^\infty(\overline{\Omega})$  functions $(v_n)_{n\in\N}$ which converges to $v \in \cA_{\rm h}^2(\Omega)$ in the norm of $\dom(\partial_{\rm h})$ when $n\to +\infty$. In particular, $(v_n)_{n\in\N}$ converges to $v$ and $(\partial_{\bar z} v_n)\mathds{1}_\Omega$ converges to $0$ when $n\to +\infty$ in $\cD'(\R^2)$. Using \eqref{eqn:cvddistrib} for $u = v_n$ and letting $n \to +\infty$ we obtain that in $\cD'(\Omega)$ there holds $v = \Phi_{\rm h}(\Gamma^+v)$ where we have used the continuity of the map $\Phi_{\rm h} \circ \Gamma^+ : \dom(\partial_{\rm h}) \to L^2(\Omega)$, and the continuity of the convolution in $\cD'(\R^2)$.
Now, remark that we also have $v = \Phi_{\rm h}(\Gamma^+ v)$ in $\cA_{\rm h}^2(\Omega)$ and taking the trace $\Gamma^+$ on both side of this identity we get
\[
	\Pi_{\rm h}^+\Gamma^+ v = \Gamma^+ v
\]
which implies $v = \Phi_{\rm h}(\Pi_{\rm h}^+ \Gamma^+ v)$ and proves the other inclusion.

\end{proof}
We are now in a good position to prove Theorem \ref{thm:berhar}.

\begin{proof}[Proof of Theorem \ref{thm:berhar}] Proposition \ref{prop:propberg} is precisely the first statement of Theorem \ref{thm:berhar} thus, the only thing left to prove is the statement for the Hardy spaces. Now, recall that for $\sharp\in\{{\rm h},{\rm ah}\}$, we have defined the Hardy spaces in \eqref{eqn:Hardyspace} and that we want to prove
\[
	\mathcal{H}_\sharp^2(\Omega) = \{\Phi_\sharp(f) : f \in L^2(\partial\Omega), \Pi_\sharp^-f = 0\}.
\]
Let $\mathcal{E}_\sharp$ be the set on the right-hand side, we prove both inclusions.

\underline{Inclusion $\mathcal{E}_\sharp\subset \mathcal{H}_\sharp^2(\Omega)$.}
Let $ u = \Phi_\sharp(f) \in \mathcal{E}_\sharp$, by definition $u \in \mathcal{A}_\sharp^2$ et $\Gamma^+ u = f \in L^2(\partial\Omega)\subset H^{-\frac12}(\partial\Omega)$ which proves this inclusion.

\underline{Inclusion $\mathcal{H}_\sharp^2(\Omega) \subset \mathcal{E}_\sharp$.}
Let $u \in \mathcal{H}_\sharp^2(\Omega)$. We know that in particular $u = \Phi_\sharp(f)$ for some $f \in H^{-\frac12}(\partial\Omega)$ such that $\Pi_\sharp^- f = 0$. But we have $\Gamma^+ u = f \in L^2(\partial\Omega)$ which proves this inclusion and concludes the proof.
\end{proof}
\subsection{Explicit description of the domain of the maximal Wirtinger operators}
In this paragraph, we prove the following descrition of the domains of the maximal Wirtinger operators introduced in Section \ref{sec:maxc-rop}. This description involves the Bergman spaces introduced in the beginning of Section \ref{sec:berghard}.
\begin{prop} Let $\sharp \in \{{\rm h}, {\rm ah}\}$. The following direct sum decomposition holds:
\[
	\dom(\partial_\sharp) =  \{u \in H^{1}(\Omega) : \Pi_\sharp^+ \Gamma^+ u = 0\} \dotplus \cA_\sharp^2(\Omega).
\]
\label{prop:dirsum}
\end{prop}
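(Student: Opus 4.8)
The plan is to prove the statement for $\sharp = {\rm h}$; the case $\sharp = {\rm ah}$ then follows by complex conjugation, exchanging the roles of $\cA_{\rm h}^2(\Omega)$ and $\cA_{\rm ah}^2(\Omega)$. Recall that an internal direct sum $\dom(\partial_{\rm h}) = B \dotplus C$ requires three things: $B,C\subset\dom(\partial_{\rm h})$, $B\cap C=\{0\}$, and $\dom(\partial_{\rm h}) = B+C$. Here $B := \{u\in H^1(\Omega):\Pi_{\rm h}^+\Gamma^+u=0\}$ and $C := \cA_{\rm h}^2(\Omega)$.

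The first two requirements are quick. The inclusion $B\subset\dom(\partial_{\rm h})$ is immediate since $H^1(\Omega)\subset\dom(\partial_{\rm h})$, and $\cA_{\rm h}^2(\Omega)\subset\dom(\partial_{\rm h})$ because $\partial_{\bar z}u=0$ for $u\in\cA_{\rm h}^2(\Omega)$; as $\dom(\partial_{\rm h})$ is a linear space, $B+C\subset\dom(\partial_{\rm h})$. If $v\in B\cap C$, then $v\in\cA_{\rm h}^2(\Omega)\cap H^1(\Omega)$, so $\Gamma^+v\in H^{\frac12}(\partial\Omega)$, and the identity $v=\Phi_{\rm h}(\Pi_{\rm h}^+\Gamma^+v)$ established in the proof of Proposition \ref{prop:propberg} gives $v=\Phi_{\rm h}(0)=0$.

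The substance is the decomposition $\dom(\partial_{\rm h})\subset B+C$. Let $u\in\dom(\partial_{\rm h})$, set $g:=\partial_{\bar z}u\in L^2(\Omega)$ and $w:=N_{\rm h}(g)|_\Omega$. By Lemma \ref{lem:newtonpot}, $w\in H^1(\Omega)$, and since $\partial_{\bar z}\varphi_{\rm h}=\delta_0$ in $\cD'(\R^2)$ one gets $\partial_{\bar z}w=g$ in $\cD'(\Omega)$, hence $\partial_{\bar z}(u-w)=0$ in $\cD'(\Omega)$; as $u-w\in L^2(\Omega)$, it follows that $u-w\in\cA_{\rm h}^2(\Omega)$. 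The function $w$ need not satisfy the boundary constraint, so I correct it: put $h:=\Pi_{\rm h}^+\Gamma^+w$, which lies in $H^{\frac12}(\partial\Omega)$ because $\Gamma^+w\in H^{\frac12}(\partial\Omega)$ and $\Pi_{\rm h}^+$ maps $H^{\frac12}(\partial\Omega)$ into itself (Proposition \ref{prop:proj_conti}), and set
\[
	\widetilde w:=w-\Phi_{\rm h}(h),\qquad u-\widetilde w=(u-w)+\Phi_{\rm h}(h).
\]
By Proposition \ref{prop:extphi}, $\Phi_{\rm h}(h)\in H^1(\Omega)$, hence $\widetilde w\in H^1(\Omega)$; moreover $\Phi_{\rm h}(h)\in\cA_{\rm h}^2(\Omega)$ (it is holomorphic and $L^2$), so $u-\widetilde w\in\cA_{\rm h}^2(\Omega)$. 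Finally, using $\Gamma^+\Phi_{\rm h}=\Pi_{\rm h}^+$ (Definition \ref{def:szeproj}) and that $\Pi_{\rm h}^+$ is a projector,
\[
	\Pi_{\rm h}^+\Gamma^+\widetilde w=\Pi_{\rm h}^+\Gamma^+w-\Pi_{\rm h}^+\big(\Pi_{\rm h}^+\Gamma^+w\big)=0,
\]
so $u=\widetilde w+(u-\widetilde w)$ with $\widetilde w\in B$ and $u-\widetilde w\in C$, which is the desired decomposition.

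I do not expect a genuinely hard step: the proof is essentially bookkeeping on top of the machinery already in hand (Lemma \ref{lem:newtonpot}, Proposition \ref{prop:extphi}, Proposition \ref{prop:propberg}, Proposition \ref{prop:proj_conti}, Definition \ref{def:szeproj}). The point deserving the most care is keeping the various trace notions consistent — the $H^{\frac12}$-trace on $H^1(\Omega)$, the $H^{-\frac12}$-valued extension of $\Gamma^+$ to $\dom(\partial_{\rm h})$ from Lemma \ref{lem:extop}, and the boundary values $\Gamma^\pm$ of the Cauchy integrals — and checking that $\Phi_{\rm h}(h)$ has trace exactly $h$, i.e.\ $\Gamma^+\Phi_{\rm h}(h)=\Pi_{\rm h}^+h=h$, which holds precisely because $h$ already lies in the range of $\Pi_{\rm h}^+$.
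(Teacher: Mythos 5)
Your proof is correct, but the existence step is handled by a genuinely different construction from the one in the paper. The paper writes $v=\Phi_\sharp(\Pi_\sharp^+\Gamma^+v)+\big(v-\Phi_\sharp(\Pi_\sharp^+\Gamma^+v)\big)$ directly, then needs two facts to see that the second summand is in $H^1(\Omega)$: Lemma \ref{lem:regular} (the regularizing property of $\Pi_\sharp^-\circ\Gamma^+$, proved with the pseudo-differential commutator estimate of Proposition \ref{prop:pseudocommut}), which gives $\Gamma^+\big(v-\Phi_\sharp(\Pi_\sharp^+\Gamma^+v)\big)=\Pi_\sharp^-\Gamma^+v\in H^{1/2}(\partial\Omega)$, and then Lemma \ref{lem:ellipregul}. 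You instead build an $H^1$ representative first by convolving $\partial_{\bar z}u$ with the fundamental solution (Lemma \ref{lem:newtonpot}), which peels off a holomorphic remainder $u-w\in\cA_{\rm h}^2(\Omega)$ for free, and only then fix the boundary constraint by subtracting $\Phi_{\rm h}(\Pi_{\rm h}^+\Gamma^+w)$ — which lands in $H^1(\Omega)\cap\cA_{\rm h}^2(\Omega)$ because $\Gamma^+w$ is already in $H^{1/2}(\partial\Omega)$ and Proposition \ref{prop:extphi} applies at the $s=1/2$ level. The net effect is that your argument bypasses Lemma \ref{lem:regular} entirely (elliptic regularity still enters, but only indirectly through Proposition \ref{prop:extphi}); the paper's route is shorter once that lemma is available and exposes the exact trace decomposition $\Gamma^+v=\Pi_\sharp^+\Gamma^+v+\Pi_\sharp^-\Gamma^+v$, whereas yours is more constructive and keeps the heavy boundary-pseudodifferential input out of this particular proof. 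Your directness check is also streamlined: you appeal to $v=\Phi_{\rm h}(\Pi_{\rm h}^+\Gamma^+v)$ from the proof of Proposition \ref{prop:propberg} and conclude immediately, while the paper reproves this by hand from the two defining conditions. Both are fine.
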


For $\sharp \in \{{\rm h},{\rm ah}\}$, the range of the trace operator $\Gamma^+ : \dom{(\partial_\sharp)} \to H^{-\frac12}(\partial\Omega)$ is of crucial importance to prove Proposition \ref{prop:dirsum}. We describe its range now, thanks to the Szeg\"o projectors introduced in \eqref{eqn:Plemeljdef} but first, we prove a regularization result.

\begin{lem}\label{lem:regular} Let $\sharp \in \{\rm h, {\rm ah}\}$. The operator $\Pi_\sharp^- \circ \Gamma^+$ is a bounded linear operator from $\dom(\partial_\sharp)$ to $H^{\frac12}(\partial\Omega)$.
\end{lem}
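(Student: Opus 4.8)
The plan is to exploit the Plemelj--Sokhotski formulas to rewrite $\Pi_\sharp^-\circ\Gamma^+$ in a way that cancels the ``bad'' order-zero part and leaves only a smoothing operator acting on the trace. Concretely, take $\sharp={\rm h}$ (the case $\sharp={\rm ah}$ is identical). By Lemma \ref{lem:extop} we already know $\Gamma^+$ is bounded from $\dom(\partial_{\rm h})$ to $H^{-\frac12}(\partial\Omega)$, and by Proposition \ref{prop:proj_conti} $\Pi_{\rm h}^-=\tfrac12(1-S_{\rm h})$ is bounded on every $H^s(\partial\Omega)$; so \emph{a priori} $\Pi_{\rm h}^-\circ\Gamma^+$ maps $\dom(\partial_{\rm h})\to H^{-\frac12}(\partial\Omega)$, and the point is to gain one full derivative. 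First I would test the definition: for $u\in\dom(\partial_{\rm h})$ and a test function $f\in C^\infty(\partial\Omega)$, consider $\langle \Gamma^+ u, S_{\rm h}^* f\rangle$ and use Green's formula \eqref{eqn:Green} together with the fact that $\Phi_{\rm ah}$ produces anti-holomorphic functions, to transfer the singular integral onto the interior and use $\partial_{\rm h}u\in L^2(\Omega)$.

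The key computation is the following. Using Green's formula \eqref{eqn:Green} with $w=\Phi_{\rm ah}(\overline{\bf n} f)\in C^\infty(\overline\Omega)$ (which satisfies $\partial_{\bar z}w=0$ since it is anti-holomorphic — wait, one must be careful: $\Phi_{\rm ah}$ is anti-holomorphic so $\partial_{\rm ah}w=\partial_z w=0$; instead I pair against $w=\Phi_{\rm h}(\cdot)$ which is holomorphic, $\partial_{\bar z}w=0$), I would write
\[
	\tfrac12\langle \overline{\bf n}\,\Gamma^+ u,\,\Gamma^+ w\rangle_{H^{-\frac12},H^{\frac12}} = \langle \partial_{\bar z}u,\,w\rangle_{L^2(\Omega)},
\]
and then choose $w=\Phi_{\rm h}(g)$ for a suitable $g\in C^\infty(\partial\Omega)$ with $\Gamma^+ w=\Pi_{\rm h}^+ g$. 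Running this identity for an appropriate family of $g$'s expresses $\langle \Pi_{\rm h}^-\Gamma^+ u, g\rangle$ — up to terms involving the commutator $[H_{\bf n},S_{\rm h}]\in\Psi_{\partial\Omega}^{-1}$ and the smoothing operator $S_{\rm h}+S_{\rm ah}\in\Psi_{\partial\Omega}^{-\infty}$ from Proposition \ref{prop:pseudocommut} — as $\langle \partial_{\bar z}u,\Phi_{\rm h}(g)\rangle_{L^2(\Omega)}$. Since $\partial_{\bar z}u\in L^2(\Omega)$ and $\Phi_{\rm h}\colon H^{-\frac12}(\partial\Omega)\to L^2(\Omega)$ is bounded (Proposition \ref{prop:extphi}), the right-hand side is bounded by $\|u\|_{\rm h}\,\|g\|_{H^{-\frac12}(\partial\Omega)}$; the commutator and smoothing terms contribute bounds by $\|\Gamma^+u\|_{H^{-\frac12}}\|g\|_{H^{-\frac12}}\lesssim\|u\|_{\rm h}\|g\|_{H^{-\frac12}}$ using that $[H_{\bf n},S_{\rm h}]$ maps $H^{-\frac12}\to H^{\frac12}$. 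Taking the supremum over $g$ with $\|g\|_{H^{-\frac12}(\partial\Omega)}\le1$ then yields $\|\Pi_{\rm h}^-\Gamma^+u\|_{H^{\frac12}(\partial\Omega)}\lesssim\|u\|_{\rm h}$, which is the claim. The identity should first be verified for $u\in C^\infty(\overline\Omega)$, where all pairings are classical, and then extended to $\dom(\partial_{\rm h})$ by the density lemma and the already-established boundedness of $\Gamma^+$.

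The main obstacle I anticipate is organizing the bookkeeping so that the order-zero Cauchy operator genuinely disappears: naively $\Pi_{\rm h}^-\Gamma^+ u$ only lives in $H^{-\frac12}$, and the gain of a derivative must come \emph{entirely} from the structural facts that (i) $\Phi_{\rm h}(g)$ is holomorphic so the volume term only sees $\partial_{\bar z}u$, and (ii) $S_{\rm h}+S_{\rm ah}$ is smoothing while $[H_{\bf n},S_{\rm h}]$ gains one order. Threading the normal-multiplication operator $H_{\bf n}$ through $S_{\rm h}$ correctly — i.e. writing $\overline{\bf n}S_{\rm h}=S_{\rm h}\overline{\bf n}+[\,\overline{\bf n},S_{\rm h}\,]$ and checking the conjugated/anti-holomorphic version matches the definition of $S_{\rm ah}$ — is the delicate part; once the algebra of projectors and the three facts of Proposition \ref{prop:pseudocommut} are lined up, the estimate is routine. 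An alternative, possibly cleaner route is to write $u=u_1+u_2$ with $u_1\in H^1(\Omega)$ (hence $\Gamma^+u_1\in H^{\frac12}$ automatically) and $u_2\in\cA_{\rm h}^2(\Omega)=\Phi_{\rm h}(\ker\Pi_{\rm h}^-)$, for which $\Pi_{\rm h}^-\Gamma^+u_2=\Pi_{\rm h}^-\Pi_{\rm h}^+(\cdots)$ collapses — but that decomposition is exactly Proposition \ref{prop:dirsum}, whose proof uses this lemma, so to avoid circularity I would keep to the direct Green's-formula argument above.
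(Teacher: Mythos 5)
Your overall strategy is the paper's: pair $\Gamma^+u$ against the trace of a Cauchy potential via Green's formula, transfer to a volume integral that only sees $\partial_{\bar z}u$, identify the dual pairing with $\Pi_{\rm h}^-\,(\cdot)\,\Gamma^+u$ through $S_{\rm h}^*=-S_{\rm ah}$, and then clean up the normal multiplication with the commutator estimate $[H_{\bf n},S_{\rm h}]\in\Psi_{\partial\Omega}^{-1}$. You also correctly flag the circularity pitfall in the alternative route via Proposition \ref{prop:dirsum}. The problem is the parenthetical ``wait, one must be careful'' correction in the middle, which talks you out of the right choice and into the wrong one.

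Concretely: your first instinct, $w=\Phi_{\rm ah}(\,\cdot\,)$, is the correct one. The relevant Green identity for $u\in\dom(\partial_{\rm h})$, $w\in H^1(\Omega)$ is
\[
	\tfrac12\langle\Gamma^+u,\overline{\bf n}\,\Gamma^+w\rangle_{H^{-1/2},H^{1/2}}
	=\langle\partial_{\bar z}u,w\rangle_{L^2(\Omega)}+\langle u,\partial_z w\rangle_{L^2(\Omega)},
\]
and the only volume term you can control with $\|u\|_{\rm h}$ is the one containing $\partial_{\bar z}u$; you therefore need $\partial_z w=0$, i.e.\ $w$ \emph{anti}-holomorphic, so $w=\Phi_{\rm ah}(f)$ and $\Gamma^+w=\Pi_{\rm ah}^+f$. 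Once you ``correct'' to $w=\Phi_{\rm h}(g)$ (holomorphic, $\partial_{\bar z}w=0$), the term $\langle u,\partial_z\Phi_{\rm h}(g)\rangle_{L^2(\Omega)}$ no longer vanishes, and there is no mapping property for $\partial_z\Phi_{\rm h}$ that lets you estimate it by $\|u\|_{\rm h}\|g\|_{H^{-1/2}}$; the displayed identity $\tfrac12\langle\overline{\bf n}\Gamma^+u,\Gamma^+w\rangle=\langle\partial_{\bar z}u,w\rangle$ is then simply false. (Mentally you seem to be reaching for \eqref{eqn:Green}, but that is stated for $u\in\dom(\partial_{\rm ah})$, where $\partial_z u\in L^2(\Omega)$; here $u\in\dom(\partial_{\rm h})$ and only $\partial_{\bar z}u\in L^2(\Omega)$.) Reverting to $w=\Phi_{\rm ah}(f)$ yields
\[
	\langle\Gamma^+u,\overline{\bf n}\,\Pi_{\rm ah}^+f\rangle=2\langle\partial_{\bar z}u,\Phi_{\rm ah}(f)\rangle_{L^2(\Omega)},
\]
then the adjoint identity $(\overline{\bf n}\Pi_{\rm ah}^+)^*=\Pi_{\rm h}^-{\bf n}$ gives boundedness of $\Pi_{\rm h}^-H_{\bf n}\Gamma^+$ into $H^{1/2}(\partial\Omega)$, and finally
\[
	\Pi_{\rm h}^-\Gamma^+u=\overline{\bf n}\,\Pi_{\rm h}^-{\bf n}\,\Gamma^+u+\overline{\bf n}\,[S_{\rm h},{\bf n}]\,\Gamma^+u
\]
with $[S_{\rm h},{\bf n}]\in\Psi_{\partial\Omega}^{-1}$ closes the argument — which is exactly the paper's proof. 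So the fix is one-line and the remainder of your bookkeeping is sound, but as written the key identity is wrong.
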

\begin{proof} Let $u\in \dom(\partial_{\rm h})$ and $u_n \in C^\infty(\overline{\Omega})$ be a sequence converging to $u$ in the $\|\cdot\|_{\rm h}$-norm when $n\to +\infty$. Pick $f\in C^\infty(\partial\Omega)$, an integration by parts yields:
\[
	\langle\Gamma^+u_n, \overline{\bf n}\  \Pi_{\rm ah}^+f\rangle_{L^2(\partial\Omega)} = 2 \langle\partial_{\bar z} u_n, \Phi_{\rm ah}(f)\rangle_{L^2(\Omega)}.
\]
It gives
\[
	|\langle\Gamma^+u_n, \overline{\bf n}\  \Pi_{\rm ah}^+f\rangle_{L^2(\partial\Omega)}| \leq 2 c \|u_n\|_{\rm h}\|f\|_{H^{-\frac12}(\partial\Omega)},
\]
for some $c>0$, where we have used Lemma \ref{lem:extop} and Proposition \ref{prop:extphi}. As in $L^2(\partial\Omega)$ there holds $S_{\rm ah}^* = - S_{\rm h}$ we get
\[
	(\overline{\mathbf{n}}\Pi_{\rm ah}^+)^* = \Pi_{\rm h}^-\mathbf{n}.
\]
In particular, there holds
\[
	|\langle\Gamma^+u_n, \overline{\bf n}\  \Pi_{\rm ah}^+f\rangle_{L^2(\partial\Omega)}| = |\langle  (\Pi_{\rm h}^-{\bf n}\Gamma^+u_n, f\rangle_{L^2(\partial\Omega)}| \leq 2 c \|u_n\|_{\rm h}\|f\|_{H^{-\frac12}(\partial\Omega)}.
\]
Letting $n\to +\infty$, we get $\Pi_{\rm h}^- {\bf n}\Gamma^+u \in H^{\frac12}(\partial\Omega)$ and that $\Pi_{\rm h}^-\circ H_{\bf n}\circ\Gamma^+$ is a linear bounded map from $H^{-\frac12}(\partial\Omega)$ to $H^{\frac12}(\partial\Omega)$. However, there holds
\[
	\Pi_{\rm h}^- \Gamma^+ u = \Big(\overline{\bf n} \Pi_{\rm h}^- {\bf n} - \overline{\bf n} [\Pi_{\rm h}^-,{\bf n}]\Big)\Gamma^+ u = \overline{\bf n} \Pi_{\rm h}^- {\bf n}\Gamma^+ u + \overline{\bf n} [S_{\rm h},{\bf n}]\Gamma^+ u.
\]
By \eqref{itm:2pseudo} Proposition \ref{prop:pseudocommut}, $[S_{\rm h},{\bf n}]\in \Psi_{\partial\Omega}^{-1}$ hence, it is a bounded operator from $H^{-\frac12}(\partial\Omega)$ to $H^{\frac12}(\partial\Omega)$. Finally, as the multiplication operator by $\overline{\bf n}$ is bounded in $H^{\frac12}(\partial\Omega)$ we obtain the expected result.

The case $u \in \dom(\partial_{\rm ah})$ is handled similarly.
\end{proof}

We are now in a good position to describe the range of the trace operator $\Gamma^+$.
\begin{corol} Let $\sharp \in \{{\rm h},{\rm ah}\}$. There holds
\[
	\ran (\Gamma^+) = \{f \in H^{-\frac12}(\partial\Omega) : \Pi_{\sharp}^- f \in H^{\frac12}(\partial\Omega) \}.
\]
\end{corol}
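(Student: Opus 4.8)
The plan is to establish the two inclusions defining $\ran(\Gamma^+)$ separately, using Lemma~\ref{lem:regular} for one direction and the description of the Bergman spaces (Theorem~\ref{thm:berhar}, or rather the part already proved in Proposition~\ref{prop:propberg}) together with the elliptic regularity Lemma~\ref{lem:ellipregul} for the other. Fix $\sharp \in \{{\rm h},{\rm ah}\}$ and abbreviate $\mathcal{R} := \{f \in H^{-\frac12}(\partial\Omega) : \Pi_{\sharp}^- f \in H^{\frac12}(\partial\Omega)\}$.

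First I would show $\ran(\Gamma^+) \subset \mathcal{R}$. Let $u \in \dom(\partial_\sharp)$; by Lemma~\ref{lem:extop}, $\Gamma^+ u \in H^{-\frac12}(\partial\Omega)$, and by Lemma~\ref{lem:regular}, $\Pi_\sharp^- \Gamma^+ u \in H^{\frac12}(\partial\Omega)$, so $\Gamma^+ u \in \mathcal{R}$. This direction is essentially immediate given the earlier results.

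For the reverse inclusion $\mathcal{R} \subset \ran(\Gamma^+)$, take $f \in H^{-\frac12}(\partial\Omega)$ with $g := \Pi_\sharp^- f \in H^{\frac12}(\partial\Omega)$. The idea is to construct $u \in \dom(\partial_\sharp)$ with $\Gamma^+ u = f$ by splitting $f = \Pi_\sharp^+ f + \Pi_\sharp^- f$ according to the Szeg\"o decomposition and handling each piece. For the holomorphic part, set $u_1 := \Phi_\sharp(\Pi_\sharp^+ f) = \Phi_\sharp(f)$ (the equality by Proposition~\ref{prop:propberg}); then $u_1 \in \cA_\sharp^2(\Omega) \subset \dom(\partial_\sharp)$ and, since $\Pi_\sharp^+ f \in H^{-\frac12}(\partial\Omega)$ with $\Pi_\sharp^-(\Pi_\sharp^+ f) = 0$, one has $\Gamma^+ u_1 = \Pi_\sharp^+ f$ (this is exactly the content of the trace computation in the proof of Proposition~\ref{prop:propberg}, giving $\Gamma^+\Phi_\sharp(h) = \Pi_\sharp^+ h$). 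For the anti-holomorphic part, since $g \in H^{\frac12}(\partial\Omega)$ we may use the bounded extension operator $E^+$ from \eqref{eqn:deftrace} and set $u_2 := E^+ g \in H^1(\Omega) \subset \dom(\partial_\sharp)$, so $\Gamma^+ u_2 = g = \Pi_\sharp^- f$. Then $u := u_1 + u_2 \in \dom(\partial_\sharp)$ and $\Gamma^+ u = \Pi_\sharp^+ f + \Pi_\sharp^- f = f$, which shows $f \in \ran(\Gamma^+)$.

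The main subtlety to check carefully is the identity $\Gamma^+\Phi_\sharp(h) = \Pi_\sharp^+ h$ for general $h \in H^{-\frac12}(\partial\Omega)$, which underpins the holomorphic piece: for $h \in C^\infty(\partial\Omega)$ this is Definition~\ref{def:szeproj}, and it extends to $H^{-\frac12}(\partial\Omega)$ by density since $\Phi_\sharp : H^{-\frac12}(\partial\Omega) \to \dom(\partial_\sharp)$ is bounded (Proposition~\ref{prop:extphi}), $\Gamma^+ : \dom(\partial_\sharp) \to H^{-\frac12}(\partial\Omega)$ is bounded (Lemma~\ref{lem:extop}), and $\Pi_\sharp^+ : H^{-\frac12}(\partial\Omega) \to H^{-\frac12}(\partial\Omega)$ is bounded (Proposition~\ref{prop:proj_conti}). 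I expect no serious obstacle: the corollary is really a bookkeeping consequence of Lemma~\ref{lem:regular} (for $\subset$) and Proposition~\ref{prop:propberg} combined with the extension operator $E^+$ (for $\supset$). One should also double-check that $\Pi_\sharp^+ f$ indeed lies in $H^{-\frac12}(\partial\Omega)$ and annihilates $\Pi_\sharp^-$, which is immediate from $\Pi_\sharp^\pm$ being complementary projectors on $H^{-\frac12}(\partial\Omega)$ by Proposition~\ref{prop:proj_conti}.
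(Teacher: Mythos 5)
Your proof is correct and follows essentially the same route as the paper: the inclusion $\ran(\Gamma^+) \subset \mathcal{R}$ is immediate from Lemma~\ref{lem:regular}, and for the reverse inclusion you construct the same test function $u = \Phi_\sharp(\Pi_\sharp^+ f) + E^+(\Pi_\sharp^- f)$ and verify $\Gamma^+ u = f$. Your additional care in justifying $\Gamma^+\Phi_\sharp(h) = \Pi_\sharp^+ h$ for $h \in H^{-\frac12}(\partial\Omega)$ by a density argument is a worthwhile elaboration of what the paper dismisses as ``easily seen''; the only cosmetic remark is that calling $E^+(\Pi_\sharp^- f)$ the ``anti-holomorphic part'' is a misnomer, since it is merely an $H^1$ extension with no holomorphicity structure.
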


\begin{proof} Let us start by proving the reverse inclusion. Let $f$ be in the set on right-hand side, there holds $f = \Pi_\sharp^+ f + \Pi_\sharp^-f$. We know that there exists an extension operator $E^+$ from $H^{\frac12}(\partial\Omega)$ to $H^{1}(\Omega)$ such that $\Gamma^+ E^+ g = g$ for all $g \in H^{\frac12}(\partial\Omega)$. Now, if $\Pi_\sharp^- f \in H^{\frac12}(\partial\Omega)$, we set
\[
	u := \Phi_\sharp(\Pi_\sharp^+ f) + E^+(\Pi_\sharp^- f).
\]
It is easily seen that $u \in \dom(\partial_\sharp)$ and $\Gamma^+ u = \Pi_\sharp^+ f + \Pi_\sharp^- f = f$.

Now, let us prove the direct inclusion and pick $f \in \ran(\Gamma^+)$. We know that there exists $u\in \dom(\partial_\sharp)$ such that $f = \Gamma^+ u $. In particular, by Lemma \ref{lem:regular} we know that $\Pi_\sharp^- f = \Pi_\sharp^- \Gamma^+ u\in H^{\frac12}(\partial\Omega)$ which concludes the proof.

\end{proof}

We are now able to prove Proposition \ref{prop:dirsum}.

\begin{proof}[Proof of Proposition \ref{prop:dirsum}] First, let us prove that the sum is direct. Let $v = \Phi_\sharp(f) = u$ with $\Pi_\sharp^- f = 0$ and $\Pi_\sharp^+ \Gamma^+ u = 0$. Then, taking the traces we obtain:
\[
	\Gamma^+ v = \Pi_\sharp^+ f = \Pi_\sharp^- \Gamma^+ u,
\]
which implies $f = \Gamma^+ u = 0$. Consequently, $v = \Phi_\sharp(f) = 0$.

Second, let us pick $v \in \dom(\partial_\sharp)$. There holds
\[
	v = \Phi_\sharp(\Pi_\sharp^+\Gamma^+ v) + v - \Phi_\sharp(\Pi_\sharp^+\Gamma^+ v).
\]
However, remark that $u := v - \Phi_\sharp(\Pi_\sharp^+\Gamma^+ v) \in \dom(\partial_\sharp)$ and satisfies $\Gamma^+ u = \Pi_\sharp^- \Gamma^+ v \in H^{\frac12}(\partial\Omega)$ by Lemma \ref{lem:regular}. Hence, by Lemma \ref{lem:ellipregul}, we obtain $u \in H^1(\Omega)$ and $\Gamma^+ u \in \ker\Pi_\sharp^+ = \ran \Pi_\sharp^-$, which concludes the proof.
\end{proof}
\section{Variational characterization of the principal eigenvalue}\label{sec:vcproof}
The aim of this section is to prove Theorem \ref{thm:vf}. In \S \ref{subsec:para1} we describe precisely the domains $\dom(q_E^\Omega)$ and $\dom(H_E^\Omega)$, where $H_E^\Omega$ is the unique self-adjoint operator associated with $q_E^\Omega$ \emph{via} Kato's first representation theorem. In \S \ref{subsec:para2}, we investigate the behavior of the map $E \in [0,+\infty) \mapsto \mu_\Omega(E)$. Finally, in \S \ref{subsec:para3}, we prove Theorem \ref{thm:vf}.

\subsection{The quadratic form $q_E^\Omega$ and its associated self-adjoint operator $H_E^\Omega$}\label{subsec:para1}
For $E>0$, recall that $q_E^\Omega$ is defined in \eqref{eqn:deffq} on the domain consisting of the closure of the $C^\infty(\overline{\Omega})$ functions with respect to the norm of the quadratic form
\[
	N_E^\Omega(u) := \sqrt{\|\partial_{\bar z} u\|_{L^2(\Omega)}^2 + \|u\|_{L^2(\Omega)}^2 + E\|u\|_{L^2(\partial\Omega)}^2}.
\]
Remark that as defined, $q_E^\Omega$ is a closed, densely defined and bounded below quadratic form thus, by Kato's first representation theorem (see \cite[Chap. VI, Thm. 2.1]{Kat}), $q_E^\Omega$ is associated with a unique self-adjoint operator $H_E^\Omega$ acting in $L^2(\Omega)$ satisfying
\[
	\dom(H_E^\Omega)\subset\dom(q_E^\Omega).
\]
In this paragraph, we describe properties of the domains $\dom(q_E^\Omega)$ and $\dom(H_E^\Omega)$ and start with the domain of the quadratic form $q_E^\Omega$.

\begin{prop}\label{prop:descformdom} Let $E >0$. The form domain $\dom(q_E^\Omega)$ admits the following direct sum decomposition
\[
	\dom(q_E^\Omega) = \{u \in H^1(\Omega) : \Pi_{\rm h}^+ \Gamma^+ u = 0\} \dotplus \cH_{\rm h}^2(\Omega).
\]
Moreover, $\dom(q_E^\Omega)$ is continuously embedded in $H^{\frac12}(\Omega)$.
\end{prop}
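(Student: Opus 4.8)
The plan is to read off $\dom(q_E^\Omega)$ from the structure of $\dom(\partial_{\rm h})$ established in Proposition~\ref{prop:dirsum}, and then to get the embedding into $H^{\frac12}(\Omega)$ by a soft closed-graph argument. First I would locate $\dom(q_E^\Omega)$ inside $\dom(\partial_{\rm h})$. For $u\in C^\infty(\overline{\Omega})$ one has $N_E^\Omega(u)^2=\|u\|_{\rm h}^2+E\|\Gamma^+u\|_{L^2(\partial\Omega)}^2$, so $N_E^\Omega\ge\|\cdot\|_{\rm h}$; hence every $N_E^\Omega$-Cauchy sequence is $\|\cdot\|_{\rm h}$-Cauchy and converges in the Hilbert space $\dom(\partial_{\rm h})$, which realizes $\dom(q_E^\Omega)$ as a subspace of $\dom(\partial_{\rm h})$. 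Such a sequence is moreover Cauchy in $L^2(\partial\Omega)$, and since $\Gamma^+\colon\dom(\partial_{\rm h})\to H^{-\frac12}(\partial\Omega)$ is continuous (Lemma~\ref{lem:extop}) and $L^2(\partial\Omega)\hookrightarrow H^{-\frac12}(\partial\Omega)$, the $L^2$-limit of the boundary traces equals $\Gamma^+u$. Therefore $\dom(q_E^\Omega)\subseteq\{u\in\dom(\partial_{\rm h}):\Gamma^+u\in L^2(\partial\Omega)\}$, with $N_E^\Omega$ equivalent on the latter space to $(\|\cdot\|_{\rm h}^2+E\|\Gamma^+\cdot\|_{L^2(\partial\Omega)}^2)^{1/2}$.

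Next I would intersect the decomposition of Proposition~\ref{prop:dirsum} with the constraint $\Gamma^+u\in L^2(\partial\Omega)$. Writing $u=u_1+u_2$ with $u_1\in H^1(\Omega)$, $\Pi_{\rm h}^+\Gamma^+u_1=0$ and $u_2=\Phi_{\rm h}(\Pi_{\rm h}^+\Gamma^+u)\in\cA_{\rm h}^2(\Omega)$, the trace $\Gamma^+u_1\in H^{\frac12}(\partial\Omega)\subseteq L^2(\partial\Omega)$ is automatic, so $\Gamma^+u\in L^2(\partial\Omega)$ forces $\Gamma^+u_2\in L^2(\partial\Omega)$, i.e. $u_2\in\cH_{\rm h}^2(\Omega)$. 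Conversely $H^1(\Omega)$ and $\cH_{\rm h}^2(\Omega)$ both lie in $\{u\in\dom(\partial_{\rm h}):\Gamma^+u\in L^2(\partial\Omega)\}$, and the sum is direct because $\{u\in H^1(\Omega):\Pi_{\rm h}^+\Gamma^+u=0\}\cap\cH_{\rm h}^2(\Omega)$ is contained in the trivial intersection of Proposition~\ref{prop:dirsum}. Hence $\{u\in\dom(\partial_{\rm h}):\Gamma^+u\in L^2(\partial\Omega)\}=\{u\in H^1(\Omega):\Pi_{\rm h}^+\Gamma^+u=0\}\dotplus\cH_{\rm h}^2(\Omega)$.

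It then remains to prove the reverse inclusion, i.e. that each summand is an $N_E^\Omega$-limit of $C^\infty(\overline{\Omega})$ functions. For $u\in H^1(\Omega)$ the trace theorem shows that $N_E^\Omega$ is controlled by the $H^1(\Omega)$-norm, so density of $C^\infty(\overline{\Omega})$ in $H^1(\Omega)$ does it. For $u=\Phi_{\rm h}(f)\in\cH_{\rm h}^2(\Omega)$ with $f\in L^2(\partial\Omega)$ and $\Pi_{\rm h}^-f=0$, choose $g_n\in C^\infty(\partial\Omega)$ with $g_n\to f$ in $L^2(\partial\Omega)$ and set $f_n:=\Pi_{\rm h}^+g_n$, which is still in $C^\infty(\partial\Omega)$ since $\Pi_{\rm h}^+\in\Psi^0_{\partial\Omega}$ preserves $C^\infty(\partial\Omega)$, with $f_n\to f$ in $L^2(\partial\Omega)$; then $\Phi_{\rm h}(f_n)\in C^\infty(\overline{\Omega})$ by the smoothing property of the Cauchy integral on smooth data established in Section~\ref{sec:berghard}, $\partial_{\bar z}\Phi_{\rm h}(f_n)=0=\partial_{\bar z}u$, $\Phi_{\rm h}(f_n)\to u$ in $L^2(\Omega)$ by Proposition~\ref{prop:extphi} with $s=-\frac12$, and $\Gamma^+\Phi_{\rm h}(f_n)=\Pi_{\rm h}^+f_n=f_n\to f=\Gamma^+u$ in $L^2(\partial\Omega)$, so $N_E^\Omega(\Phi_{\rm h}(f_n)-u)\to0$. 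This establishes the claimed decomposition.

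Finally, for the embedding: the previous steps give $\dom(q_E^\Omega)\subseteq H^{\frac12}(\Omega)$, since the $H^1(\Omega)$-summand embeds into $H^{\frac12}(\Omega)$ and the Hardy summand $\Phi_{\rm h}(f)$, $f\in L^2(\partial\Omega)$, lies in $H^{\frac12}(\Omega)$ by Proposition~\ref{prop:extphi} with $s=0$. The inclusion $\dom(q_E^\Omega)\hookrightarrow H^{\frac12}(\Omega)$ has closed graph, because $\dom(q_E^\Omega)$ is complete for $N_E^\Omega$ (it is a closed form), $H^{\frac12}(\Omega)$ is Banach, and an $N_E^\Omega$-limit and an $H^{\frac12}(\Omega)$-limit of the same sequence must agree, both refining $L^2(\Omega)$-convergence; the closed graph theorem then yields continuity. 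I expect the main obstacle to be the approximation step for the Hardy summand in the form norm: this is where one genuinely uses that $\Pi_{\rm h}^+$ preserves $C^\infty(\partial\Omega)$ and that $\Phi_{\rm h}$ maps smooth boundary data to $C^\infty(\overline{\Omega})$, together with the $H^{-\frac12}\to L^2$ mapping property of $\Phi_{\rm h}$; everything else is bookkeeping around Proposition~\ref{prop:dirsum}.
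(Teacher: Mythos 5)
Your proof is correct and follows essentially the same route as the paper: it uses Proposition~\ref{prop:dirsum} to place $\dom(q_E^\Omega)$ inside $\dom(\partial_{\rm h})$ via $N_E^\Omega\ge\|\cdot\|_{\rm h}$, extracts the $L^2(\partial\Omega)$-trace constraint from form-norm convergence, approximates in the form norm using the smoothness of $\Phi_{\rm h}$ on $C^\infty(\partial\Omega)$-data, and concludes the $H^{\frac12}(\Omega)$-embedding by a closed-graph argument. The only differences are organizational — the paper approximates $u+\Phi_{\rm h}(f)$ in one step and uses $f_n$ directly rather than $\Pi_{\rm h}^+g_n$ — not conceptual.
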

\begin{proof} Set $\mathcal{E} = \{u \in H^1(\Omega) : \Pi_{\rm h}^+ \Gamma^+ u = 0\} \dotplus \cH_{\rm h}^2(\Omega)$ and remark that the sum is direct by the same arguments as in the proof of Proposition \ref{prop:dirsum}. We prove the set equality by proving both inclusions.
\paragraph{\underline{Inclusion $\mathcal{E}\subset\dom(q_{E}^\Omega)$}} Let $ v:= u + \Phi_{\rm h}(f) \in \mathcal{E}$ and take $(u_n)_{n\in \N}$ and $(f_n)_{n\in \N}$ two sequences of functions such that
\[
	\text{for all } n\in \N\quad u_n \in C^\infty(\overline{\Omega}),\quad  f_n \in C^\infty(\partial\Omega);
\]
and
\[
	\text{when } n\to +\infty\text{ there holds }\|u_n - u\|_{H^1(\Omega)} \to 0,\quad\|f_n - f\|_{L^2(\partial\Omega)}\to 0.
\]
By \cite[Theorem 3.1.]{Bell}, we have $v_n := u_n + \Phi_{\rm h}(f_n) \in C^\infty(\overline{\Omega})$ and for $E>0$, there exists $C>0$ such that there holds
\begin{align*}
	q_E^\Omega(v-v_n) +(E^2+1)\|v-v_n\|_{L^2(\Omega)}^2 =&\ \|\partial_{\bar z} (u - u_n)\|_{L^2(\Omega)}^2 \\& \quad+ E \|\Gamma^+(u - u_n) + \Pi_{\rm h}^+(f - f_n)\|_{L^2(\partial\Omega)}^2 \\&\quad\quad + \|(u-u_n) + (\Phi_{\rm h}(f-f_n))\|_{L^2(\Omega)}^2\\& \leq C\bigg(\|u-u_n\|_{H^1(\Omega)} + \|f- f_n\|_{L^2(\partial\Omega)}\bigg),
\end{align*}
where we have used the mapping properties of $\Phi_h$, $\Gamma^+$, $\Pi_{\rm h}^+$ and the continuity of the embedding of $L^2(\partial\Omega)$ into $H^{-\frac12}(\partial\Omega)$.
Letting $n\to +\infty$, we obtain that $v \in \dom(q_{E}^\Omega)$ and this inclusion is proved.
\paragraph{\underline{Inclusion $\dom(q_{E}^\Omega)\subset\mathcal{E}$}} For all $u \in C^\infty(\overline{\Omega})$, there holds
\[
	q_E^\Omega(u) + (E^2+1) \|u\|_{L^2(\Omega)}^2 \geq \|u\|_{\rm h}^2.
\]
In particular, the closure of $C^\infty(\overline{\Omega})$ for the norm $N_E^\Omega$ is included in $\dom(\partial_{\rm h})$. It rewrites $\dom(q_E^\Omega)\subset \dom(\partial_{\rm h})$ and by Proposition \ref{prop:dirsum}, any $v \in \dom(q_E^\Omega)$ writes $v = u + \Phi_{\rm h}(f)$, for some $u \in H^1(\Omega)$ with $\Pi_{\rm h}^+ \Gamma^+ u = 0$ and some $f \in H^{-\frac12}(\partial\Omega)$ with $\Pi_{\rm h}^- f = 0$. Now, if $v_n \in C^\infty(\overline{\Omega})$ converges to $v\in \dom(q_E^\Omega)$ in the norm of the quadratic form, we have
\[
	\|\Gamma^+ v - \Gamma^+ v_n\|_{L^2(\partial\Omega)} \leq E^{-1} q_E^\Omega(v-v_n)\to 0,\quad n\to +\infty.
\]
In particular $\Gamma^+ v  = \Gamma^+ u + f \in L^2(\partial\Omega)$ and as $\Gamma^+ u \in H^{\frac12}(\partial\Omega)$ we get $f\in L^2(\partial\Omega)$ which concludes the proof of this inclusion.

Let us consider the inclusion map
\[
	\mathcal{I} :=  \dom(q_E^\Omega) \to H^{\frac12}(\Omega),\quad (\mathcal{I} u) = u.
\]
By Proposition \ref{prop:extphi} for $s = 0$, this map is well-defined. Consider $v_n := u_n + \Phi_{\rm h}(f_n) \in \dom(q_E^\Omega)$ which converges to $v$ in the norm of the quadratic form $q_E^\Omega$ and assume that $v_n \to w$ in the $H^\frac12(\Omega)$-norm. In particular, as $v \in \dom(q_E^\Omega)$, there holds $v = u + \Phi_{\rm h}(f)$ for some $u\in H^1(\Omega)$ and $f\in H^{\frac12}(\partial\Omega)$ as in the definition of $\mathcal{E}$. In particular, in $\cD'(\Omega)$ we obtain
\[
	u + \Phi_{\rm h}(f) = w
\]
and as both terms belong to $H^{\frac12}(\Omega)$, the closed graph theorem gives that $\mathcal{I}$ is continuous.
\end{proof}
Because of the compact embedding of $H^{\frac12}(\Omega)$ into $L^2(\Omega)$, an immediate corollary of Proposition \ref{prop:descformdom} reads as follows.
\begin{corol}Let $E>0$, the operator $H_E^\Omega$ has compact resolvent and its spectrum consists of a non-decreasing sequence of eigengalues denoted $(\mu_j^\Omega(E))_{j\geq1}$. Moreover, there holds
\[
	\mu_j^\Omega(E) =\inf_{\tiny{\begin{array}{c}F \subset \dom{(q_E^\Omega)}\\ \dim F = j\end{array}}}\sup_{u \in F\setminus\{0\}} \frac{4 \int_\Omega |\partial_{\bar z} u|^2 dx - E^2 \int_{\Omega}|u|^2dx + E \int_{\partial\Omega} |u|^2 ds}{\int_\Omega |u|^2 dx}.
\]
\end{corol}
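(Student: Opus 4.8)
The statement follows by combining the structural description of the form domain given in Proposition~\ref{prop:descformdom} with the classical spectral theory of semibounded self-adjoint operators, and the plan naturally splits into three stages. First I would record that, by Proposition~\ref{prop:descformdom}, the form domain $\dom(q_E^\Omega)$ equipped with the form norm $N_E^\Omega$ embeds continuously into $H^{\frac12}(\Omega)$; since $\Omega$ is bounded and smooth, the Rellich--Kondrachov theorem provides a compact embedding $H^{\frac12}(\Omega)\hookrightarrow L^2(\Omega)$, so that $(\dom(q_E^\Omega),N_E^\Omega)\hookrightarrow L^2(\Omega)$ is compact. A standard argument then upgrades this to the assertion that $H_E^\Omega$ has compact resolvent: fixing $\lambda$ strictly below the lower bound of $q_E^\Omega$, the shifted form $q_E^\Omega-\lambda$ is coercive on $\dom(q_E^\Omega)$, so (directly from Kato's first representation theorem, see~\cite[Chap.~VI]{Kat}, or via Lax--Milgram) the resolvent $(H_E^\Omega-\lambda)^{-1}$ maps $L^2(\Omega)$ boundedly into $(\dom(q_E^\Omega),N_E^\Omega)$; composing with the compact embedding back into $L^2(\Omega)$ shows $(H_E^\Omega-\lambda)^{-1}$ is compact on $L^2(\Omega)$, hence so is the resolvent at every point of $\rho(H_E^\Omega)$.

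Being self-adjoint, bounded below and with compact resolvent, $H_E^\Omega$ then has purely discrete spectrum: a non-decreasing sequence of real eigenvalues of finite multiplicity, $(\mu_j^\Omega(E))_{j\ge1}$ (repeated according to multiplicity), with $\mu_j^\Omega(E)\to+\infty$ as $j\to\infty$. This part is entirely standard. For the variational formula, I would invoke the classical Courant--Fischer characterization of the eigenvalues of a semibounded self-adjoint operator with compact resolvent, phrased in terms of the associated quadratic form, which gives
\[
\mu_j^\Omega(E)=\inf_{\substack{F\subset\dom(q_E^\Omega)\\ \dim F=j}}\ \sup_{u\in F\setminus\{0\}}\frac{q_E^\Omega(u)}{\|u\|_{L^2(\Omega)}^2}.
\]
It then remains to substitute the explicit expression $q_E^\Omega(u)=4\int_\Omega|\partial_{\bar z}u|^2\,dx-E^2\int_\Omega|u|^2\,dx+E\int_{\partial\Omega}|u|^2\,ds$, which is immediate on the core $C^\infty(\overline{\Omega})$ and has to be propagated to the whole form domain. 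Here I would again use Proposition~\ref{prop:descformdom}: every $u\in\dom(q_E^\Omega)$ lies in $\dom(\partial_{\rm h})$, so $\partial_{\bar z}u\in L^2(\Omega)$, and $\Gamma^+u\in L^2(\partial\Omega)$; moreover the three maps $u\mapsto\|\partial_{\bar z}u\|_{L^2(\Omega)}$, $u\mapsto\|u\|_{L^2(\Omega)}$ and $u\mapsto\|\Gamma^+u\|_{L^2(\partial\Omega)}$ are continuous with respect to $N_E^\Omega$, so passing to the limit along a sequence in $C^\infty(\overline{\Omega})$ approximating $u$ in $N_E^\Omega$ identifies $q_E^\Omega(u)$ with the displayed three integrals.

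The routine pieces are the passage from compact form-embedding to compact resolvent, the discreteness of the spectrum, and the appeal to Courant--Fischer; the one point genuinely requiring care is the last one, namely checking that the three integrals appearing in the Rayleigh quotient are individually well defined and $N_E^\Omega$-continuous on the \emph{abstract} form domain $\dom(q_E^\Omega)$, so that $q_E^\Omega$ is literally given by that formula there and not merely on the dense subspace $C^\infty(\overline{\Omega})$. This is precisely the role played by the detailed direct-sum description of $\dom(q_E^\Omega)$ in Proposition~\ref{prop:descformdom} (in particular by the fact that $\Gamma^+u\in L^2(\partial\Omega)$ for $u$ in the form domain), which is why that proposition is proved first.
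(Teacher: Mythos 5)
Your proposal is correct and follows exactly the route the paper has in mind: the paper states the corollary without proof, remarking only that it is ``immediate'' from Proposition~\ref{prop:descformdom} and the compact embedding of $H^{\frac12}(\Omega)$ into $L^2(\Omega)$, and your argument simply spells out the standard chain (compact form-domain embedding $\Rightarrow$ compact resolvent $\Rightarrow$ discrete spectrum $\Rightarrow$ Courant--Fischer) that the authors are implicitly invoking. The additional care you take in verifying that $q_E^\Omega$ is given by the explicit three-integral formula on the full form domain, and not just on the core $C^\infty(\overline{\Omega})$, is a legitimate and welcome refinement.
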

\begin{rem} For $E = 0$, the counterpart of Propostion \ref{prop:descformdom},  would read
\[
	\dom(q_0^\Omega) = \{u \in H^1(\Omega) : \Pi_{\rm h}^+ \Gamma^+ u = 0\} \dotplus \cA_{\rm h}^2(\Omega).
\]
In particular, note that $\dom(q_0^\Omega)$ can not be included in any Sobolev space $H^{s}(\Omega)$, ($s>0$). Indeed, for any Bergman function $u \in \cA_{\rm h}^2(\Omega)$, there holds $q_0^\Omega(u) = 0$ which implies that for all $j \geq 1$ we have $\mu_j^\Omega(0) = 0$. Thus $0$ is an eigenvalue of $H_0^\Omega$ of infinite multiplicity which would not be possible if we had $\dom(q_0^\Omega) \subset H^s(\Omega)$ because of the compact embedding of $H^s(\Omega)$ in $L^2(\Omega)$. This phenomena is reminiscent of what happens for the Dirac operator with zig-zag boundary conditions as discussed in \cite{Schm95}.

\end{rem}
We conclude this paragraph by a description of the domain of the operator $H_E^\Omega$.
\begin{prop} Let $E>0$, there holds:
\[
	\dom(H_E^\Omega) = \{u \in H^1(\Omega) : \partial_{\bar z} u \in H^1(\Omega)\text{ and } \partial_{\bar z}u + {\bf n}\frac{E}2 u = 0 \text{ on }\partial\Omega\}.
\]
\label{prop:domope}
\end{prop}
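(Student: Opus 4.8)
The plan is to characterize $\dom(H_E^\Omega)$ as the set of $u\in\dom(q_E^\Omega)$ for which there exists $w\in L^2(\Omega)$ with $q_E^\Omega(u,\varphi)=\langle w,\varphi\rangle_{L^2(\Omega)}$ for all $\varphi\in\dom(q_E^\Omega)$, and then to unwind this condition using the decomposition $\dom(q_E^\Omega)=\{u\in H^1(\Omega):\Pi_{\rm h}^+\Gamma^+u=0\}\dotplus\cH_{\rm h}^2(\Omega)$ from Proposition \ref{prop:descformdom}.

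\medskip
\textbf{Step 1: The interior equation.} Given $u\in\dom(H_E^\Omega)$ with $H_E^\Omega u=w$, test against $\varphi\in C_0^\infty(\Omega)$. The boundary term in $q_E^\Omega$ vanishes and we obtain $-4\partial_z\partial_{\bar z}u=E^2u+w$ in $\cD'(\Omega)$; more importantly, testing against $\varphi\in C_0^\infty(\Omega)$ gives $4\langle\partial_{\bar z}u,\partial_{\bar z}\varphi\rangle_{L^2(\Omega)}=\langle(E^2u+w)\cdot\text{(const)},\varphi\rangle$, which identifies $\partial_z(\partial_{\bar z}u)$ as an $L^2(\Omega)$ function. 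Thus $\partial_{\bar z}u\in\dom(\partial_{\rm ah})$ with $\Gamma^+(\partial_{\bar z}u)$ well-defined in $H^{-1/2}(\partial\Omega)$ by Lemma \ref{lem:extop}. One then wants to upgrade $\partial_{\bar z}u$ to $H^1(\Omega)$: combining the decomposition of $u$ with elliptic regularity (Lemma \ref{lem:ellipregul}) applied to the Sobolev component and the smoothness of the Hardy component via Theorem \ref{thm:berhar} requires care, since a generic Hardy function is only in $L^2(\partial\Omega)$ on the boundary, not $H^{1/2}$. This is where one must show that the boundary condition forces extra regularity.

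\medskip
\textbf{Step 2: Identifying the boundary condition.} For general $\varphi=\varphi_1+\Phi_{\rm h}(g)\in\dom(q_E^\Omega)$, integrate by parts in $4\langle\partial_{\bar z}u,\partial_{\bar z}\varphi\rangle_{L^2(\Omega)}$ using Green's formula \eqref{eqn:Green} (with the roles arranged so $\partial_{\bar z}u\in\dom(\partial_{\rm ah})$ plays the role of the $\dom(\partial_{\rm ah})$ function and $\varphi$ the $H^1$ one — here one needs the Hardy component's trace to make sense, which it does since $\varphi\in\dom(q_E^\Omega)\subset H^{1/2}(\Omega)$). Collecting all boundary contributions and using that $w=-4\partial_z\partial_{\bar z}u-E^2u$ cancels the interior terms, one is left with
\[
	2\langle\overline{\bf n}\,\Gamma^+(\partial_{\bar z}u),\Gamma^+\varphi\rangle_{H^{-1/2},H^{1/2}} + E\langle\Gamma^+u,\Gamma^+\varphi\rangle_{L^2(\partial\Omega)} = 0
\]
for all admissible $\varphi$; since $\Gamma^+\varphi$ ranges over a dense subset of (a suitable space of) boundary traces — by the surjectivity description in the Corollary following Lemma \ref{lem:regular} — this yields $\overline{\bf n}\,\Gamma^+(\partial_{\bar z}u)+\tfrac E2\Gamma^+u=0$, i.e. $\partial_{\bar z}u+\tfrac E2{\bf n}u=0$ on $\partial\Omega$ after multiplying by ${\bf n}$ and using $|{\bf n}|=1$. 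Conversely, any $u\in H^1(\Omega)$ with $\partial_{\bar z}u\in H^1(\Omega)$ satisfying this boundary relation is checked, by the same integration by parts, to satisfy the variational identity with $w=-4\partial_z\partial_{\bar z}u-E^2u\in L^2(\Omega)$, hence lies in $\dom(H_E^\Omega)$.

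\medskip
\textbf{Step 3: Boundary regularity — the main obstacle.} The delicate point is showing that $u\in\dom(H_E^\Omega)$ actually has $u\in H^1(\Omega)$ (a priori only $u\in\dom(q_E^\Omega)$) and $\partial_{\bar z}u\in H^1(\Omega)$. The boundary condition $\Gamma^+(\partial_{\bar z}u)=-\tfrac E2{\bf n}\,\Gamma^+u$, a priori only an identity in $H^{-1/2}(\partial\Omega)$, couples the two traces; one must bootstrap. Writing $u=u_1+\Phi_{\rm h}(f)$ with $u_1\in H^1$, $\Pi_{\rm h}^+\Gamma^+u_1=0$, and $f\in L^2(\partial\Omega)$, $\Pi_{\rm h}^-f=0$ (so $\Gamma^+u=\Gamma^+u_1+f$), one has $\partial_{\bar z}u=\partial_{\bar z}u_1\in L^2(\Omega)$ since $\Phi_{\rm h}(f)$ is holomorphic; then $\partial_{\bar z}u_1\in\dom(\partial_{\rm ah})$ and $\Gamma^+(\partial_{\bar z}u_1)\in H^{-1/2}$ equals $-\tfrac E2{\bf n}(\Gamma^+u_1+f)$. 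Applying Lemma \ref{lem:regular} and the Szeg\"o-projector identities from Proposition \ref{prop:pseudocommut}\eqref{itm:2pseudo}–\eqref{itm:3pseudo} to control $\Pi_{\rm ah}^-\Gamma^+(\partial_{\bar z}u_1)$ should show $f\in H^{1/2}(\partial\Omega)$, whence $\Phi_{\rm h}(f)\in H^1(\Omega)$ by Proposition \ref{prop:extphi}, so $u\in H^1(\Omega)$; and then $\Gamma^+(\partial_{\bar z}u)\in H^{1/2}(\partial\Omega)$, so Lemma \ref{lem:ellipregul} upgrades $\partial_{\bar z}u\in H^1(\Omega)$. I expect this bootstrapping argument, juggling the pseudodifferential mapping properties on $\partial\Omega$ with the two elliptic regularity lemmas, to be the technical heart of the proof.
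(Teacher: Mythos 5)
Your proposal follows essentially the same route as the paper: derive the interior equation to show $\partial_{\bar z}u\in\dom(\partial_{\rm ah})$, use Green's formula to obtain the boundary relation $2\overline{\bf n}\,\Gamma^+(\partial_{\bar z}u)+E\,\Gamma^+u=0$ in $H^{-1/2}(\partial\Omega)$, and then bootstrap the regularity via Lemma~\ref{lem:regular}, Lemma~\ref{lem:ellipregul} and the pseudodifferential properties of $S_{\rm h}$, $S_{\rm ah}$ and $[S_\sharp,{\bf n}]$. Your Step~3 correctly identifies that the crux is promoting $\Pi_{\rm h}^+\Gamma^+u$ (equivalently, the Hardy datum $f$) to $H^{1/2}(\partial\Omega)$ by exploiting the boundary condition together with the smoothing of $[S_{\rm h},{\bf n}]\in\Psi^{-1}_{\partial\Omega}$ and $S_{\rm h}+S_{\rm ah}\in\Psi^{-\infty}_{\partial\Omega}$; this is exactly what the paper does, written there directly in terms of $\Pi_{\rm ah}^\pm$ acting on $\Gamma^+u$ rather than on the decomposition $u=u_1+\Phi_{\rm h}(f)$, but the two formulations are equivalent.

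One small but genuine slip in Step~2: you propose to test with an arbitrary $\varphi\in\dom(q_E^\Omega)$ and justify the application of Green's formula \eqref{eqn:Green} by observing $\dom(q_E^\Omega)\subset H^{1/2}(\Omega)$. This is not sufficient: \eqref{eqn:Green} pairs $\Gamma^+w$ in $H^{1/2}(\partial\Omega)$ against $\overline{\bf n}\Gamma^+(\partial_{\bar z}u)\in H^{-1/2}(\partial\Omega)$, and the trace of an $H^{1/2}(\Omega)$ function lands only in $L^2(\partial\Omega)$, not in $H^{1/2}(\partial\Omega)$; moreover the Hardy component $\Phi_{\rm h}(g)$ of a form-domain element is generically \emph{not} in $H^1(\Omega)$, so $\varphi$ is not an admissible $w$ in \eqref{eqn:Green}. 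The fix is easy and is what the paper does: test only against $v\in C^\infty(\overline{\Omega})$, which is dense in $\dom(q_E^\Omega)$ and whose traces are dense in $H^{1/2}(\partial\Omega)$, yielding the boundary identity in $H^{-1/2}(\partial\Omega)$ without the problematic pairing.
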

\begin{proof} Let $\mathcal{E}$ denote the set in the right-hand side of Proposition \ref{prop:domope}. The proof is performed proving both inclusions.
\paragraph{\underline{Inclusion $\dom(H_E^\Omega)\subset\mathcal{E}$}}

Let $u\in \dom(H_E^\Omega)$ and $v \in C_0^\infty(\Omega)$, there holds
\begin{align*}
	\langle H_E^\Omega u, \overline{v}\rangle_{\cD'(\Omega),\cD(\Omega)} = \langle H_E^\Omega u, v\rangle_{L^2(\Omega)} &= q_E^\Omega[u,v]\\
					& = 4 \langle\partial_{\bar z}u,\partial_{z} \overline{v}\rangle_{\cD'(\Omega),\cD(\Omega)} - E^2\langle u,\overline{v}\rangle_{\cD'(\Omega),\cD(\Omega)}\\
					& = \langle (-\Delta -E^2)u, \overline{v}\rangle_{\cD'(\Omega),\cD(\Omega)},
\end{align*}
where $q_E^\Omega[\cdot,\cdot]$ denotes the sesquilinear form associated with the quadratic form $q_E^\Omega$. Hence, in $L^2(\Omega)$, there holds $H_E^\Omega u = (-\Delta - E^2) u$. Remark that if $u \in \dom(H_E^\Omega)$ then $\partial_{\bar z} u \in \dom(\partial_{\rm ah})$, in particular, by Green's Formula \eqref{eqn:Green}, for all $v \in C^\infty(\overline{\Omega})$ we get:
\begin{align*}
\langle H_E^\Omega u, v\rangle_{L^2(\Omega)} &=\ -4 \langle \partial_z (\partial_{\bar z}u), v\rangle_{L^2(\Omega)} - E^2\langle u,v\rangle_{L^2(\Omega)}\\
& = \ 4\langle \partial_{\bar z} u, \partial_{\bar z}v \rangle_{L^2(\Omega)} - E^2\langle u,v\rangle_{L^2(\Omega)} - 2\langle\overline{\bf n} \Gamma^+\partial_{\bar z}u,\Gamma^+v\rangle_{H^{-\frac12}(\partial\Omega),H^{\frac12}(\partial\Omega)}\\
& = \ q_E^\Omega[u,v] - \langle2\overline{\bf n} \Gamma^+\partial_{\bar z}u + E u,\Gamma^+v\rangle_{H^{-\frac12}(\partial\Omega),H^{\frac12}(\partial\Omega)}.
\end{align*}
As $v \in \dom(q_E^\Omega)$ we necessarily have $\langle2\overline{\bf n} \Gamma^+\partial_{\bar z}u + E u,\Gamma^+v\rangle_{H^{-\frac12}(\partial\Omega),H^{\frac12}(\partial\Omega)} = 0$. As this is true for all $v \in C^\infty(\overline{\Omega})$ we obtain
\begin{equation}\label{eqn:bc}
	2\overline{\bf n} \Gamma^+\partial_{\bar z}u + E \Gamma^+u = 0,\quad \text{in } H^{-\frac12}(\partial\Omega).
\end{equation}
Taking the Szeg\"o projectors in \eqref{eqn:bc} we obtain
\[
	(\Gamma^+\partial_{\bar z} u) + \frac{\bf n}{2}E \Gamma^+u = 0 \Longleftrightarrow \left\{\begin{array}{lcl}\Pi_{\rm ah}^+(\Gamma^+(\partial_{\bar z} u)) + \frac{E}2 \Pi_{\rm ah}^+{\bf n}\Gamma^+ u &=& 0\\
	\Pi_{\rm ah}^-(\Gamma^+(\partial_{\bar z} u)) + \frac{E}2 \Pi_{\rm ah}^-{\bf n}\Gamma^+ u &=& 0\end{array}\right.
\]
Nevertheless, there holds
\[
	\Pi_{\rm ah}^-  = \Pi_{\rm h}^+ - \frac12(S_{\rm h} + S_{\rm ah}),\quad \Pi_{\rm ah}^+ = \Pi_{\rm h}^- + \frac12(S_{\rm ah} + S_{\rm h}).
\]
In particular, we get
\begin{align*}
	\Pi_{\rm ah}^- (\Gamma^+(\partial_{\bar z} u)) = - \frac{E}2 \Pi_{\rm ah}^-(\mathbf{n} \Gamma^+u) &= -\frac{E}2\Big(\mathbf{n}\Pi_{\rm h}^+\Gamma^+ u + [\Pi_{\rm h}^+,\mathbf{n}]\Gamma^+u - \frac12(S_{\rm h} + S_{\rm ah})(\mathbf{n}\Gamma^+u)\Big)\\ &= -\frac{E}2\Big(\mathbf{n}\Pi_{\rm h}^+\Gamma^+u + [S_{\rm h},\mathbf{n}]\Gamma^+u - \frac12(S_{\rm h} + S_{\rm ah})(\mathbf{n}\Gamma^+u)\Big).
\end{align*}
It rewrites
\[
	\Pi_{\rm h}^+ \Gamma^+ u = -\overline{\bf n} \Big(\frac2E  \Pi_{\rm ah}^- \Gamma^+(\partial_{\bar z} u) + [S_{\rm h},{\bf n}]\Gamma^+u - \frac12(S_{\rm h}+S_{\rm ah})({\bf n}\Gamma^+ u)\Big).
\]
Remark that the right-hand side belongs to $H^{\frac12}(\partial\Omega)$. This holds for the first term because of Lemma \ref{lem:regular} and for the last two-terms because of Proposition \ref{prop:pseudocommut}. As $\Pi_{\rm h}^- \Gamma^+ u \in H^{\frac12}(\partial\Omega)$ by Lemma \ref{lem:regular}, we get $\Gamma^+ u = \Pi_{\rm h}^+ \Gamma^+ u + \Pi_{\rm h}^- \Gamma^+ u \in H^{\frac12}(\partial\Omega)$ thus, by Lemma \ref{lem:ellipregul}, $u \in H^1(\Omega)$. In particular $\Pi_{\rm ah}^+(\Gamma^+(\partial_{\bar z} u)) = - \frac{E}2\Pi_{\rm ah}^+{\bf n}\Gamma^+ u \in H^{\frac12}(\partial\Omega)$ and as $\Pi_{\rm ah}^- \Gamma^+ (\partial_{\bar z}u) \in H^{\frac12}(\partial\Omega)$ by Lemma \ref{lem:regular} we obtain $\Gamma^+ \partial_{\bar z} u = \Pi_{\rm ah}^- \Gamma^+ (\partial_{\bar z}u) + \Pi_{\rm ah}^+ \Gamma^+ (\partial_{\bar z}u) \in H^{\frac12}(\partial\Omega)$ and by Lemma \ref{lem:ellipregul} we obtain $\partial_{\bar z}u \in H^1(\Omega)$. It concludes the proof of this inclusion.
\paragraph{\underline{Inclusion $\mathcal{E}\subset \dom(H_E^\Omega)$}} Pick $u\in \mathcal{E}$. One easily sees that $(-\Delta - E^2)u \in L^2(\Omega)$, moreover for all $v \in \dom(q_E^\Omega)$, there holds
\[
	q_E^\Omega[u,v] = \langle(-\Delta - E^2)u,v\rangle_{L^2(\Omega)}.
\]
By definition of $H_E^\Omega$ it implies $u \in \dom(H_E^\Omega)$ and $H_E^\Omega u = (-\Delta -E^2)u$. 
\end{proof}

\subsection{Concavity of the first min-max level}\label{subsec:para2}
In this paragraph we investigate the behavior of the first min-max level $\mu^\Omega(E)$ with respect to the spectral parameter $E>0$. This behavior is illustrated in Figure \ref{fig:figure4} for various domains $\Omega$.
\begin{prop}\label{prop:monotonicity} The map $\mu^\Omega: E \geq 0 \mapsto \mu^\Omega(E)$ verifies the following properties.
\begin{enumerate}
\item\label{itm:1} $\mu^\Omega$ is a continuous and concave function on $\R_+$.
\item\label{itm:2} We have $\mu^\Omega(0) = 0$ and there exists $E_\star^\Omega > 0$ such that for all $E \in (0,E_\star^\Omega)$ there holds $\mu^\Omega(E) > 0$.
\item\label{itm:4} Let $0<E_1<E_2$, there holds
\[
	\mu^\Omega(E_2) \leq \frac{E_2}{E_1}\mu^\Omega(E_1) - E_2(E_2-E_1)
\]
In particular, if $\mu^\Omega(E_1) = 0$ (resp. $\mu^\Omega(E_2) = 0$) there holds $\mu^\Omega(E_2) < 0$ (resp. $\mu^\Omega(E_1)>0$).
\end{enumerate}
\end{prop}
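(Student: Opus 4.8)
The plan is to exploit two structural facts from Proposition~\ref{prop:descformdom}: for every $E>0$ the form domain $\dom(q_E^\Omega)=\{u\in H^1(\Omega):\Pi_{\rm h}^+\Gamma^+u=0\}\dotplus\cH_{\rm h}^2(\Omega)$ is \emph{independent of $E$} and embeds compactly into $L^2(\Omega)$; and, for a \emph{fixed} $u$ in this common domain, the Rayleigh quotient
\[
	R_E(u):=\frac{4\int_\Omega|\partial_{\bar z}u|^2\,\dd x + E\int_{\partial\Omega}|u|^2\,\dd s}{\int_\Omega|u|^2\,\dd x}-E^2
\]
is a downward parabola in $E$, hence concave on $\R$. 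Since $\mu^\Omega(E)=\inf_u R_E(u)$ with the infimum over the same set for every $E>0$, and since $-E^2\le\mu^\Omega(E)\le R_E(u)<+\infty$ for any fixed $u$, the function $\mu^\Omega$ is finite and, being a pointwise infimum of concave functions, concave on $(0,\infty)$; continuity there is then automatic. This handles the bulk of item~\eqref{itm:1}.

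For item~\eqref{itm:4} I would compare $R_{E_2}(u)$ and $R_{E_1}(u)$ for one and the same $u$: writing $a:=4\|\partial_{\bar z}u\|_{L^2(\Omega)}^2/\|u\|_{L^2(\Omega)}^2\ge0$, a direct computation gives
\[
	R_{E_2}(u)-\tfrac{E_2}{E_1}R_{E_1}(u)=-a\,\tfrac{E_2-E_1}{E_1}-E_2(E_2-E_1)\le-E_2(E_2-E_1).
\]
Taking $u$ almost optimal for $R_{E_1}$ and letting the defect go to zero yields $\mu^\Omega(E_2)\le\frac{E_2}{E_1}\mu^\Omega(E_1)-E_2(E_2-E_1)$ (here one uses $E_2/E_1>0$), and the two stated dichotomies follow at once: $\mu^\Omega(E_1)=0$ forces $\mu^\Omega(E_2)\le-E_2(E_2-E_1)<0$, while $\mu^\Omega(E_2)=0$ forces $\mu^\Omega(E_1)\ge E_1(E_2-E_1)>0$.

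Item~\eqref{itm:2} is where a genuine argument is needed. That $\mu^\Omega(0)=0$ is immediate: any non-zero $u\in\cA_{\rm h}^2(\Omega)$ belongs to $\dom(q_0^\Omega)$ with $q_0^\Omega(u)=0$, so $\mu^\Omega(0)\le0$, while non-negativity of the numerator gives $\mu^\Omega(0)\ge0$. For positivity near $0$, I would introduce
\[
	\lambda_1:=\inf_{u\in\dom(q_1^\Omega)\setminus\{0\}}\frac{4\int_\Omega|\partial_{\bar z}u|^2\,\dd x+\int_{\partial\Omega}|u|^2\,\dd s}{\int_\Omega|u|^2\,\dd x}\ge0.
\]
Since the numerator plus $\|u\|_{L^2(\Omega)}^2$ is equivalent to $N_1^\Omega(u)^2$, the direct method together with the compact embedding $\dom(q_1^\Omega)\hookrightarrow L^2(\Omega)$ shows that $\lambda_1$ is attained. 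A minimiser $u_1$ would satisfy $\partial_{\bar z}u_1=0$ and $\Gamma^+u_1=0$, that is, $u_1\in\cH_{\rm h}^2(\Omega)$ with vanishing trace; Theorem~\ref{thm:berhar} then forces $u_1=\Phi_{\rm h}(\Gamma^+u_1)=0$, a contradiction, so $\lambda_1>0$. Consequently, for $0<E\le1$ and any $u\in\dom(q_E^\Omega)=\dom(q_1^\Omega)$,
\[
	q_E^\Omega(u)=4\|\partial_{\bar z}u\|_{L^2(\Omega)}^2+E\|u\|_{L^2(\partial\Omega)}^2-E^2\|u\|_{L^2(\Omega)}^2\ge(E\lambda_1-E^2)\|u\|_{L^2(\Omega)}^2,
\]
so $\mu^\Omega(E)\ge E(\lambda_1-E)>0$ for all $E\in(0,E_\star^\Omega)$ with $E_\star^\Omega:=\min(1,\lambda_1)$.

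Finally, to extend continuity and concavity across $E=0$: fixing a non-zero $u_0\in\cH_{\rm h}^2(\Omega)$ one has $-E^2\le\mu^\Omega(E)\le R_E(u_0)=-E^2+E\,\|u_0\|_{L^2(\partial\Omega)}^2/\|u_0\|_{L^2(\Omega)}^2$, so $\mu^\Omega(E)\to0=\mu^\Omega(0)$ as $E\to0^+$, giving continuity on $\R_+$; concavity on $\R_+$ then follows by letting $\varepsilon\to0^+$ in the chord inequality of $\mu^\Omega$ between $\varepsilon$ and any $E_2>0$. The only substantive inputs are the $E$-independence and compactness of the form domain and the Hardy-space rigidity behind $\lambda_1>0$, all already available; the rest is elementary, so I expect no real obstacle beyond keeping careful track of the behaviour at $E=0$.
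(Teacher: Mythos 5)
Your proof is correct and follows essentially the same route as the paper's: concavity and continuity via pointwise infima of downward parabolas in $E$ over the ($E$-independent, for $E>0$) form domain; $\mu^\Omega(0)=0$ using holomorphic test functions; positivity near $0$ via the strict positivity of the bottom eigenvalue of the auxiliary coercive form (the paper uses $\mathfrak{Q}(u)=\|\partial_{\bar z}u\|^2+\|u\|_{L^2(\partial\Omega)}^2$ and $E_\star^\Omega=\min(4,\lambda_1^\Omega)$, you use $4\|\partial_{\bar z}u\|^2+\|u\|_{L^2(\partial\Omega)}^2$ and $E_\star^\Omega=\min(1,\lambda_1)$, which is cosmetic); and item (3) from the same algebraic identity on the Rayleigh quotient, which the paper packages by bounding $\|u_1\|_{L^2(\partial\Omega)}^2$ for an optimal eigenfunction while you observe directly that the boundary term cancels in $R_{E_2}-\tfrac{E_2}{E_1}R_{E_1}$. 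Your extra care in extending continuity and concavity across $E=0$, where $\dom(q_0^\Omega)$ strictly contains $\dom(q_E^\Omega)$ for $E>0$, addresses a point the paper passes over rather quickly.
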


\begin{proof} As for all $u\in \dom(q_E^\Omega)$ the function $\big(E \geq0 \mapsto q_E^\Omega(u)\big)$ is a continuous and concave, so is $\big(E\geq0 \mapsto \mu^\Omega(E)\big)$ and Point \eqref{itm:1} is proved.

Regarding Point \eqref{itm:2}, one observes that for all $u\in \dom(q_E^\Omega)$ there holds $q_0^\Omega(u) \geq 0$ and in particular $\mu^\Omega(0) \geq 0$. Now, for any $f\in L^2(\partial\Omega)$ we have $\Phi_{\rm h}(f) \in \dom(q_E^\Omega)$ and $q_0^\Omega(u) = 0$ because $\Phi_{\rm h}(f)$ is holomorphic in $\Omega$. Consequently, there holds $\mu^\Omega(0) = 0$.

To prove the second part of Point \eqref{itm:2}, let $u \in \dom(q_E^\Omega)$ and remark that
\begin{equation}
\label{eqn:minmaxpos}
	q_E^\Omega(u) = (4-E) \|\partial_{\bar z} u\|_{L^2(\Omega)}^2 - E^2 \|u\|_{L^2(\Omega)}^2 + E \mathfrak{Q}(u)
\end{equation}
where the quadratic form $\mathfrak{Q}$ is defined as
\[
	\mathfrak{Q}(u) = \|\partial_{\bar z} u\|_{L^2(\Omega)}^2 + \|u\|_{L^2(\partial\Omega)}^2,\quad \dom(\mathfrak{Q}) = \dom (q_E^\Omega).
\]
Now, remark that $\mathfrak{Q} \geq 0$ thus, by Kato's first representation theorem, there exists a unique self-adjoint operator $\mathfrak{H}$ such that $\dom(\mathfrak{H}) \subset \dom(\mathfrak{Q})$ and its spectrum is a sequence of non-decreasing eigenvalues because $\dom(\mathfrak{Q}) = \dom(q_E^\Omega)$ is compactly embedded into $L^2(\Omega)$. Let $\lambda_1^\Omega$ be its smallest eigenvalue, we already know by the min-max principle that $\lambda_1^\Omega \geq 0$. Moreover, if $\lambda_1^\Omega = 0$, for an associated eigenfunction $u$, we obtain $\mathfrak{Q}(u) = 0$ which implies that $\partial_{\bar z} u = 0$ hence $u$ is holomorphic with trace in $L^2(\partial\Omega)$. Consequently, $u$ belongs to $\mathcal{H}_{\rm h}^2(\Omega)$ and $u = \Phi_{\rm h}(f)$ for some $f \in L^2(\partial\Omega)$ such that $\Gamma^+ u = f$. However, as $\mathfrak{Q}(u) = 0$, we also obtain $\Gamma^+ u = f = 0$ which yields $u=0$ which is not possible because $u$ is an eigenfunction. It implies that $\lambda_1^\Omega > 0$ and using the min-max principle in \eqref{eqn:minmaxpos}, we get for all $u \in \dom(q_E^\Omega)$:
\[
	q_E^\Omega(u) \geq (4-E) \|\partial_{\bar z} u\|_{L^2(\Omega)}^2 - E^2 \|u\|_{\Omega}^2 + E \lambda_1^\Omega \|u\|_{L^2(\Omega)}^2.
\]
In particular, if $E< 4$ we obtain
\[
	q_E^\Omega(u) \geq E \big(\lambda_1^\Omega - E\big)\|u\|_{L^2(\Omega)}^2
\]
and the min-max principle yields
\[
	\mu^\Omega(E) \geq E(\lambda_1^\Omega - E).
\]
Thus, setting $E_\star^\Omega := \min(4,\lambda_1^\Omega)$, for all $E \in (0,E_\star^\Omega)$, we have $\mu^\Omega(E) >0$.

Let us prove Point \eqref{itm:4}. Let $u \in \dom(q_E^\Omega)$ and $0 < E_1 < E_2$. There holds
\begin{equation}\label{eqn:monotonicity}
q_{E_2}^\Omega (u) = q_{E_1}^\Omega(u) - (E_2^2 - E_1^2)\int_{\Omega}|u|^2 dx + (E_2 - E_1)\int_{\partial\Omega}|u|^2 ds.
\end{equation}
Now, pick $u_1$ a normalized eigenfunction of $H_{E_1}^\Omega$ associated with the eigenvalue $\mu^\Omega(E_1)$. We have $q_{E_1}^\Omega(u_1) = \mu^\Omega(E_1)$ which implies
\[
	\int_{\partial\Omega}|u_1|^2 ds \leq \frac{1}{E_1}\Big(4\int_\Omega |\partial_{\bar z}u_1|^2 dx + E_1 \int_{\partial\Omega}|u_1|^2ds\Big) = \frac1{E_1}(q_{E_1}^\Omega (u_1) + E_1^2 )\leq \frac{E_1^2 + \mu^\Omega(E_1)}{E_1}.
\]
Thus, evaluating \eqref{eqn:monotonicity} with $u = u_1$ we obtain
\[
	q_{E_2}^\Omega(u_1) \leq \mu^\Omega(E_1) - (E_2^2-E_1^2) + \frac{E_2 - E_1}{E_1}(E_1^2 + \mu^\Omega(E_1)).
\]
The min-max principle finally gives the sought inequality
\[
\begin{array}{lcl}
	\mu^\Omega(E_2) &\leq&\displaystyle \mu^\Omega(E_1) - (E_2^2-E_1^2) + \frac{E_2 - E_1}{E_1}(E_1^2 + \mu^\Omega(E_1))\\
				&=& \displaystyle\frac{E_2}{E_1}\mu^\Omega(E_1) - E_2(E_2 - E_1).
\end{array}
\]
Now, assume that $\mu^\Omega(E_1) = 0$. It yields
\[
	\mu^\Omega(E_2) \leq - E_2(E_2 - E_1) < 0.
\]
Similarly, if $\mu^\Omega(E_2) = 0$ we get
\[
	0 < E_1(E_2 - E_1) \leq \mu^\Omega(E_1).
\]
\end{proof}

\subsection{Proof of the variational principle}\label{subsec:para3}
In our way to prove Theorem \ref{thm:vf} we will need the following two propositions.
\begin{prop}Let $E>0$ be such that $\mu^\Omega(E) = 0$ then $E \in Sp_{dis}(D^\Omega)$.
\label{prop:sens1}
\end{prop}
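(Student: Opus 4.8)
The plan is to show that if $\mu^\Omega(E) = 0$ for some $E > 0$, then the associated self-adjoint operator $H_E^\Omega$ has $0$ as an eigenvalue, and to translate a corresponding eigenfunction $u$ into an eigenfunction of $D^\Omega$ at the level $E$. First I would invoke the corollary to Proposition \ref{prop:descformdom}: since $H_E^\Omega$ has compact resolvent, $\mu^\Omega(E) = \mu_1^\Omega(E) = 0$ is an eigenvalue, so there exists $u \in \dom(H_E^\Omega) \setminus \{0\}$ with $H_E^\Omega u = 0$. By Proposition \ref{prop:domope}, this means $u \in H^1(\Omega)$, $\partial_{\bar z} u \in H^1(\Omega)$, the equation $(-\Delta - E^2) u = 0$ holds in $L^2(\Omega)$ (equivalently $-4\partial_z\partial_{\bar z} u = E^2 u$), and the boundary condition $\partial_{\bar z} u + {\bf n}\frac{E}{2} u = 0$ holds on $\partial\Omega$.

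Next I would define the candidate eigenpair of $D^\Omega$. Motivated by the heuristic \eqref{eqn:vpdir}, set
\[
	v := -\frac{2\rmi}{E}\,\partial_{\bar z} u,\qquad \Psi := (u, v)^\top.
\]
Then $v \in H^1(\Omega)$ by the above, so $\Psi \in H^1(\Omega,\C^2)$. The second component of the eigenvalue equation, $-2\rmi\partial_{\bar z} u = E v$, holds by construction. For the first component, $-2\rmi\partial_z v = Eu$, compute $-2\rmi\partial_z v = -2\rmi\partial_z\big(-\tfrac{2\rmi}{E}\partial_{\bar z} u\big) = -\tfrac{4}{E}\partial_z\partial_{\bar z} u = \tfrac{E^2}{E} u = Eu$, using $-4\partial_z\partial_{\bar z} u = E^2 u$. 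So the bulk equation $D^\Omega \Psi = E\Psi$ holds distributionally in $\Omega$. It remains to check the infinite mass boundary condition $v = \rmi {\bf n} u$ on $\partial\Omega$: from the boundary condition on $u$ we have $\partial_{\bar z} u = -\tfrac{E}{2}{\bf n} u$ on $\partial\Omega$, hence $v = -\tfrac{2\rmi}{E}\partial_{\bar z} u = -\tfrac{2\rmi}{E}\cdot(-\tfrac{E}{2}){\bf n} u = \rmi {\bf n} u$ on $\partial\Omega$, as required. Therefore $\Psi \in \dom(D^\Omega)$ and $D^\Omega \Psi = E\Psi$ with $\Psi \neq 0$ (since $u \neq 0$), so $E$ is an eigenvalue of $D^\Omega$; as $D^\Omega$ has compact resolvent, $E \in Sp_{\mathrm{dis}}(D^\Omega)$.

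The main obstacle is making rigorous the passage from the scalar boundary value problem of Proposition \ref{prop:domope} to membership in $\dom(D^\Omega)$; in particular one must ensure the traces are taken in the right spaces. Here $u \in H^1(\Omega)$ gives $\Gamma^+ u \in H^{1/2}(\partial\Omega)$, and $\partial_{\bar z} u \in H^1(\Omega)$ gives $\Gamma^+(\partial_{\bar z} u) \in H^{1/2}(\partial\Omega)$, so both sides of the boundary identities are genuine $H^{1/2}(\partial\Omega)$ functions and the algebraic manipulations above are legitimate; the identity $-4\partial_z\partial_{\bar z} u = E^2 u$ in $L^2(\Omega)$ then also justifies $-2\rmi\partial_z v = Eu$ in $L^2(\Omega)$. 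One small point worth addressing is non-triviality: $u \neq 0$ suffices since $\Psi$ has $u$ as its first component, so no cancellation issue arises. Finally, one should note that $E>0$ was used only to divide by $E$ in defining $v$, which is harmless.
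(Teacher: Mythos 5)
Your proof is correct and follows essentially the same route as the paper: take an eigenfunction $u$ of $H_E^\Omega$ at eigenvalue $0$, use Proposition \ref{prop:domope} to get the regularity $u,\partial_{\bar z}u\in H^1(\Omega)$ and the boundary identity, and then build $\Psi=(u,-\tfrac{2\rmi}{E}\partial_{\bar z}u)^\top\in\dom(D^\Omega)$ satisfying $D^\Omega\Psi=E\Psi$. The only difference is that you spell out the trace-regularity and non-triviality checks more explicitly than the paper does, which is a reasonable gloss rather than a new argument.
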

\begin{proof} Let $E>0$ be such that $\mu^\Omega(E) = 0$ and consider a normalized associated eigenfunction $v \in \dom(H_E^\Omega)$. Set $u = (u_1,u_2)^\top = (v, -\frac{2 \rmi}{E}\partial_{\bar z}v)^\top$, by Proposition \ref{prop:domope}, $u\in H^1(\Omega,\C^2)$ and as $v\in\dom(H_E^\Omega)$, in $H^{\frac12}(\partial\Omega)$ there holds
\[
	\Gamma^+(\partial_{\bar z} v) +{\bf n}\frac{E}2 \Gamma^+ v = 0 \Longleftrightarrow -2E^{-1}\rmi \Gamma^+(\partial_{\bar z} v) = \rmi {\bf n} \Gamma^+u \Longleftrightarrow \Gamma^+ u_2 = \rmi {\bf n}\Gamma^+ u_1.
\]
Hence, $(u_1,u_2)^\top \in \dom(D^\Omega)$ and there holds
\begin{align*}
	D^\Omega (u_1,u_2)^\top = \begin{pmatrix} 0 & -2\rmi\partial_{z}\\-2\rmi\partial_{\bar z} & 0\end{pmatrix} (u_1,u_2)^\top &= (-2\rmi\partial_z u_2, -2\rmi\partial_{\bar z} u_1)^\top\\&= (-\frac1{E}\Delta u, E u_2)^\top\\
	& = E(u_1,u_2)^\top.
\end{align*}
Hence, $E \in Sp_{dis}(D_\Omega)$ and it concludes the proof of Proposition \ref{prop:sens1}.

\end{proof}
\begin{prop}Let $E \in Sp_{dis}(D^\Omega)\cap\mathbb{R}_+^*$ then $\mu^\Omega(E) \leq 0$.
\label{prop:sens2}
\end{prop}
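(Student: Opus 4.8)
The plan is to take an eigenfunction of $D^\Omega$ associated with $E$ and use its first component as a test function for $q_E^\Omega$. Let $(u_1,u_2)^\top\in\dom(D^\Omega)$ be a normalized eigenfunction with $D^\Omega(u_1,u_2)^\top = E(u_1,u_2)^\top$, so that in $\Omega$ we have the system \eqref{eqn:vpdir}, namely $-2\rmi\partial_z u_2 = E u_1$ and $-2\rmi\partial_{\bar z}u_1 = E u_2$, together with the boundary relation $u_2 = \rmi{\bf n}u_1$ on $\partial\Omega$. From the second equation in $\Omega$ we get $u_2 = -\tfrac{2\rmi}{E}\partial_{\bar z}u_1$, and since $u_2\in H^1(\Omega)$ (by the definition of $\dom(D^\Omega)$), this shows $\partial_{\bar z}u_1\in H^1(\Omega)$. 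Eliminating $u_2$ from the system yields $-4\partial_z\partial_{\bar z}u_1 = E^2 u_1$, i.e.\ $-\Delta u_1 = E^2 u_1$ in $\Omega$. The first task is thus to observe that $u_1\in H^1(\Omega)$ with $\partial_{\bar z}u_1\in H^1(\Omega)$, and that $u_1$ satisfies, via $u_2=\rmi{\bf n}u_1$ combined with $u_2 = -\tfrac{2\rmi}{E}\partial_{\bar z}u_1$, exactly the boundary condition $\partial_{\bar z}u_1 + {\bf n}\tfrac{E}{2}u_1 = 0$ on $\partial\Omega$. By Proposition \ref{prop:domope}, this means $u_1\in\dom(H_E^\Omega)$; in particular $u_1\in\dom(q_E^\Omega)$ and $u_1$ is admissible in the min-max \eqref{eqn:firstminmax}. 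One should also check $u_1\neq 0$: if $u_1\equiv 0$ then $u_2\equiv 0$ as well from the first equation, contradicting normalization.

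Next I would compute $q_E^\Omega(u_1)$. Using the integration by parts / Green's formula \eqref{eqn:Green} with the pair $(\partial_{\bar z}u_1, u_1)$ (legitimate since $\partial_{\bar z}u_1\in\dom(\partial_{\rm ah})$ and $u_1\in H^1(\Omega)$), we have
\[
	4\int_\Omega|\partial_{\bar z}u_1|^2\,dx = 4\langle\partial_{\bar z}u_1,\partial_{\bar z}u_1\rangle_{L^2(\Omega)} = -4\langle\partial_z\partial_{\bar z}u_1,u_1\rangle_{L^2(\Omega)} + 2\langle\overline{\bf n}\Gamma^+\partial_{\bar z}u_1,\Gamma^+u_1\rangle_{H^{-\frac12}(\partial\Omega),H^{\frac12}(\partial\Omega)}.
\]
The bulk term equals $E^2\int_\Omega|u_1|^2\,dx$ since $-4\partial_z\partial_{\bar z}u_1 = E^2 u_1$, and the boundary term is rewritten using $\Gamma^+\partial_{\bar z}u_1 = -{\bf n}\tfrac{E}{2}\Gamma^+u_1$, so that $2\langle\overline{\bf n}\Gamma^+\partial_{\bar z}u_1,\Gamma^+u_1\rangle = -E\langle|\Gamma^+u_1|^2\rangle_{L^2(\partial\Omega)} = -E\int_{\partial\Omega}|u_1|^2\,ds$. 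Therefore
\[
	4\int_\Omega|\partial_{\bar z}u_1|^2\,dx = E^2\int_\Omega|u_1|^2\,dx - E\int_{\partial\Omega}|u_1|^2\,ds,
\]
which is exactly $q_E^\Omega(u_1) = 0$. By the variational characterization \eqref{eqn:firstminmax} of $\mu^\Omega(E)$ as an infimum over $\dom(q_E^\Omega)\setminus\{0\}$, and since $u_1$ is an admissible nonzero competitor with $q_E^\Omega(u_1)/\|u_1\|_{L^2(\Omega)}^2 = 0$, we conclude $\mu^\Omega(E)\leq 0$.

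The main technical point to be careful about is justifying that the formal manipulations (the elimination of $u_2$, the boundary condition for $u_1$, and the application of Green's formula) are valid at the level of regularity actually available, rather than only formally as in the heuristic around \eqref{eqn:bc1}. This is handled precisely because $\dom(D^\Omega)\subset H^1(\Omega,\C^2)$, so $u_1,u_2\in H^1(\Omega)$ from the outset, and then $\partial_{\bar z}u_1 = \tfrac{E}{2\rmi}u_2\in H^1(\Omega)$ upgrades $u_1$ to an element of $\dom(H_E^\Omega)$ via Proposition \ref{prop:domope}; the trace identities then hold in $H^{\frac12}(\partial\Omega)$ and Green's formula \eqref{eqn:Green} applies verbatim. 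Everything else is the short computation above, so I expect no genuine obstacle — the content is entirely in correctly bookkeeping the regularity and the boundary condition provided by Proposition \ref{prop:domope}.
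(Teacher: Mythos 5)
Your proof is correct and follows essentially the same route as the paper: take the first component $u_1$ of a normalized eigenfunction, derive the bulk equation $-4\partial_z\partial_{\bar z}u_1 = E^2 u_1$ and the boundary identity $2\overline{\bf n}\,\partial_{\bar z}u_1 + E u_1 = 0$, integrate by parts, and conclude $q_E^\Omega(u_1)=0$ so that $\mu^\Omega(E)\le 0$ (the paper simply does the integration by parts directly, without the detour through Proposition \ref{prop:domope}, since $u_1\in H^1(\Omega)\subset\dom(q_E^\Omega)$ already makes $u_1$ an admissible competitor). One small slip: the implication $u_1\equiv 0\Rightarrow u_2\equiv 0$ follows from the second equation $-2\rmi\partial_{\bar z}u_1 = E u_2$, not the first as you wrote, though this does not affect the argument.
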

\begin{proof}Let $E \in Sp_{dis}(D^\Omega)\cap\mathbb{R}_+^*$ and pick $u = (u_1,u_2)^\top \in \dom(D^\Omega)$ a normalized eigenfunction of $D^\Omega$ associated with $E$. We have
\[
	\left\{
		\begin{array}{ll}
			D^\Omega u = E u& \text{in } \Omega,\\
			u_2 = \rmi {\bf n} u_1 & \text{on } \partial\Omega.
		\end{array}
	\right.
\]
In particular, we have $-2\rmi \partial_{\bar z} u_1 = E u_2$ and $\partial_{\bar z}u_1 \in H^1(\Omega)$. It yields
\[
	E u_1 = -2\rmi \partial_{z} u_2 = - \frac4{E} \partial_{z}\partial_{\bar z} u_1.
\]
Taking the scalar product with respect to $u_1$ on both side of the previous equation we get
\begin{equation}\label{eqn:obtfq}
	E^2 \int_{\Omega}|u_1|^2 dx = - 4 \int_{\Omega} (\partial_{z}\partial_{\bar z} u_1)\overline{u_1}dx = 4\int_{\Omega}|\partial_{\bar z} u_1|^2dx - 2\int_{\partial\Omega}\overline{\bf n}(\partial_{\bar z}u_1) \overline{u_1} ds.
\end{equation}
Now, remark that on $\partial\Omega$, we have
\[
	-\frac{2\rmi}{E} \partial_{\bar z}u_1 = u_2 = \rmi {\bf n} u_1
\]
which implies that on $\partial\Omega$
\[
	2\overline{\bf n}\partial_{\bar z}u_1 + {E}u_1 = 0.
\]
Hence, \eqref{eqn:obtfq} becomes
\[
	E^2\int_{\Omega} |u_1|^2 = 4\int_{\Omega}|\partial_{\bar z}u_1|^2 dx + E \int_{\partial\Omega}|u_1|^2 ds
\]
which reads $q_{E}^\Omega(u_1) = 0$ thus, the min-max principle gives $\mu^\Omega(E)\leq 0$.
\end{proof}

Now, we have all the tools to prove Theorem \ref{thm:vf}. The proof is performed proving each implication.
\begin{proof}[Proof of Theorem \ref{thm:vf}]~
By Proposition \ref{prop:sens2}, we have $\mu^\Omega(E_1(\Omega)) \leq 0$. Assume that $\mu^\Omega(E_1(\Omega)) < 0$, by Proposition \ref{prop:monotonicity} we know that there exists $0<E < E_1(\Omega)$ such that $\mu^\Omega(E) = 0$ which, by Proposition \ref{prop:sens1}, implies $E \in Sp_{dis}(D^\Omega)$. It is not possible because, by definition of $E_1(\Omega)$, $E\geq E_1(\Omega)$ consequently, we obtain $\mu^\Omega(E_1(\Omega)) = 0$.

Let $E>0$ be such that $\mu^\Omega(E) = 0$. By Proposition \ref{prop:sens1}, $E \in Sp_{dis}(D^\Omega)$ and necessarily $E \geq E_1(\Omega)$. If $E>E_1(\Omega)$, by Proposition \ref{prop:monotonicity}, we obtain $\mu^\Omega(E_1(\Omega)) > 0$ but by Proposition \ref{prop:sens2} we necessarily have $\mu^\Omega(E_1(\Omega)) \leq 0$ which implies that necessarily there holds $E = E_1(\Omega)$.

\end{proof}
\section{Geometric upper bounds on the spectral gap}\label{sec:isopin}
The goal of this section is to prove Theorem \ref{thm:ineq} and this is discussed in \S \ref{subsec:sharpub}. But first, in \S \ref{subsec:simpleub}, we give a simple geometric upper bound on the spectral gap which illustrates how Theorem \ref{thm:vf} can be used.
\subsection{A simple upper bound}\label{subsec:simpleub}
An immediate consequence of Theorem \ref{thm:vf} reads as follows.
\begin{prop}
	Let $\Omega \subset \mathbb{R}^2$ be $C^\infty$ and simply connected. There holds
	\[
		E_1(\Omega) \leq \frac{|\partial\Omega|}{|\Omega|}.
	\]
\end{prop}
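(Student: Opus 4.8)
The plan is to combine Theorem~\ref{thm:vf} with the monotonicity properties of $\mu^\Omega$ recorded in Proposition~\ref{prop:monotonicity}. It is enough to produce one value $E_0>0$ for which $\mu^\Omega(E_0)\le 0$: indeed, if we had $E_1(\Omega)>E_0$, then Theorem~\ref{thm:vf} would give $\mu^\Omega(E_1(\Omega))=0$, and point~\eqref{itm:4} of Proposition~\ref{prop:monotonicity}, applied with $E_1=E_0<E_2=E_1(\Omega)$, would force $\mu^\Omega(E_0)>0$, a contradiction. Hence any such $E_0$ automatically satisfies $E_1(\Omega)\le E_0$.

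To exhibit such an $E_0$, I would simply evaluate the Rayleigh quotient in \eqref{eqn:firstminmax} on the constant function $u\equiv 1$, which belongs to $C^\infty(\overline{\Omega})\subset\dom(q_E^\Omega)$. Since $\partial_{\bar z}1=0$, one has
\[
	q_E^\Omega(1)=-E^2\!\int_\Omega 1\,dx+E\!\int_{\partial\Omega}1\,ds=E\big(|\partial\Omega|-E|\Omega|\big),\qquad \int_\Omega|1|^2\,dx=|\Omega|,
\]
so that $\mu^\Omega(E)\le E\big(|\partial\Omega|-E|\Omega|\big)/|\Omega|$ for every $E>0$. Choosing $E_0:=|\partial\Omega|/|\Omega|>0$ makes the right-hand side vanish, whence $\mu^\Omega(E_0)\le 0$, and the previous paragraph yields $E_1(\Omega)\le |\partial\Omega|/|\Omega|$.

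The argument has no real obstacle: beyond Theorem~\ref{thm:vf} and Proposition~\ref{prop:monotonicity}, the only thing needed is the trivial observation that constants are admissible test functions. The point of the statement is merely to illustrate how cheaply the new variational formulation produces geometric bounds; obtaining the sharp inequality of Theorem~\ref{thm:ineq} will instead require a carefully chosen test function constructed through the Riemann mapping theorem.
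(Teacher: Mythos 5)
Your proof is correct and is essentially the same as the paper's: both test the Rayleigh quotient with the constant function $u\equiv 1$ to get $\mu^\Omega(E)\le E\big(\tfrac{|\partial\Omega|}{|\Omega|}-E\big)$, then invoke Proposition~\ref{prop:monotonicity} to conclude $E_1(\Omega)\le |\partial\Omega|/|\Omega|$. You spell out the monotonicity argument a bit more explicitly, but the idea is identical.
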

There is no reason for the above upper bound to be attained among Euclidean domains. However, the bound brings into play simple geometric quantities: the perimeter and the area of $\Omega$.
\begin{proof}
Let $E>0$ and $u \equiv 1$ the function constant to $1$ in $\Omega$. As $u\in \dom(q_E^\Omega)$, by the min-max principle we obtain
\[
	\mu^\Omega(E) \leq \frac{q_E^\Omega(u)}{\|u\|_{L^2(\Omega)}^2} = E\Big(\frac{|\partial\Omega|}{|\Omega|} - E\Big).
\]
So in $E_{\rm crit} := \frac{|\partial\Omega|}{|\Omega|}$ we get $\mu^\Omega(E_{\rm crit}) \leq 0$ and by Proposition \ref{prop:monotonicity} we know that
\[
	E_1(\Omega) \leq E_{\rm crit} = \frac{|\partial\Omega|}{|\Omega|}.
\]
\end{proof}

\subsection{A sharp upper bound}\label{subsec:sharpub}
It turns out Theorem \ref{thm:ineq} is a consequence of the following result.

\begin{thm} Let $\Omega \subset \R^2$ be a $C^\infty$ simply connected domain. There holds
\[
	E_1(\Omega) \leq \frac{|\partial\Omega| + \sqrt{|\partial\Omega|^2 + 8\pi E_1(\D)(E_1(\D) - 1)(\pi r_i^2 +|\Omega|)}}{2(\pi r_i^2 + |\Omega|)}
\]
with equality if and only if $\Omega$ is a disk.
\label{thm:isoper}
\end{thm}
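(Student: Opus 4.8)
The plan is to feed a carefully chosen test function into the variational characterization of Theorem~\ref{thm:vf}. Write $P:=|\partial\Omega|$, $A:=\pi r_i^2+|\Omega|$, and let $\bar E$ denote the right-hand side of the asserted inequality, which is precisely the larger root of $A\,E^2-P\,E-2\pi E_1(\D)\bigl(E_1(\D)-1\bigr)=0$. It is enough to produce a single nonzero $w\in\dom(q_{\bar E}^\Omega)$ with $q_{\bar E}^\Omega(w)\le 0$: this gives $\mu^\Omega(\bar E)\le 0$, and then Theorem~\ref{thm:vf} together with Proposition~\ref{prop:monotonicity} forces $E_1(\Omega)\le\bar E$. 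The test function will be the principal eigenfunction of the disk, transplanted to $\Omega$ through a Riemann map, following the idea of Szeg\"o~\cite{Sze54}.

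\textbf{Transplantation and change of variables.} Fix an incenter $x_0\in\Omega$, i.e. $\dist(x_0,\partial\Omega)=r_i$, and let $F\colon\D\to\Omega$ be a conformal map with $F(0)=x_0$; since $\partial\Omega$ is $C^\infty$, $F$ extends to a $C^\infty$ diffeomorphism from $\overline{\D}$ onto $\overline{\Omega}$ with $F'$ nonvanishing on $\overline{\D}$. Put $v(\zeta):=J_0\bigl(E_1(\D)|\zeta|\bigr)$, which is real-analytic on $\R^2$, and $w:=v\circ F^{-1}\in C^\infty(\overline{\Omega})\subset\dom(q_E^\Omega)$. Since $F^{-1}$ is holomorphic, the holomorphic Dirichlet term is conformally invariant, and the change of variables $x=F(\zeta)$, $dx=|F'|^2\,d\zeta$, turns the quadratic form into
\[
q_E^\Omega(w)=4\int_\D|\partial_{\bar\zeta}v|^2\,d\zeta-E^2\int_\D v^2|F'|^2\,d\zeta+E\int_{\partial\D}v^2|F'|\,ds,\qquad E>0 .
\]

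\textbf{The estimates and conclusion of the inequality.} Set $\beta:=J_0(E_1(\D))^2$. Since $E_1(\D)$ is defined by $J_0(E_1(\D))=J_1(E_1(\D))$, one has $\beta=J_1(E_1(\D))^2$, $v\equiv\sqrt\beta$ and $\partial_n v\equiv-E_1(\D)\sqrt\beta$ on $\partial\D$; Lommel's integral formula gives $\int_\D v^2\,d\zeta=\pi\bigl(J_0(E_1(\D))^2+J_1(E_1(\D))^2\bigr)=2\pi\beta$; and integrating $4\int_\D|\partial_{\bar\zeta}v|^2=\int_\D|\nabla v|^2$ by parts with $-\Delta v=E_1(\D)^2 v$ yields $4\int_\D|\partial_{\bar\zeta}v|^2\,d\zeta=E_1(\D)^2\!\int_\D v^2\,d\zeta+\int_{\partial\D}v\,\partial_n v\,ds=2\pi\beta\,E_1(\D)\bigl(E_1(\D)-1\bigr)$. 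As $E_1(\D)$ lies below the first zero of $J_0$, the map $r\mapsto J_0(E_1(\D)r)$ is positive and decreasing on $[0,1]$, so $v^2\ge\beta$ on $\overline{\D}$ with strict inequality in $\D$; hence $\int_{\partial\D}v^2|F'|\,ds=\beta\,P$, whereas the sub-mean value inequality for the subharmonic function $|F'|^2$, the Schwarz-lemma bound $|F'(0)|\ge\dist(x_0,\partial\Omega)=r_i$, and $\int_\D(v^2-\beta)\,d\zeta=2\pi\beta-\pi\beta=\pi\beta$ give
\[
\int_\D v^2|F'|^2\,d\zeta=\int_\D(v^2-\beta)|F'|^2\,d\zeta+\beta\!\int_\D|F'|^2\,d\zeta\ \ge\ |F'(0)|^2\!\int_\D(v^2-\beta)\,d\zeta+\beta\,|\Omega|\ \ge\ \beta\,A .
\]
Collecting the three contributions, $q_E^\Omega(w)\le\beta\bigl(2\pi E_1(\D)(E_1(\D)-1)-A\,E^2+P\,E\bigr)$ for every $E>0$, and the right-hand side vanishes exactly at $E=\bar E$. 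Since $\|w\|_{L^2(\Omega)}^2=\int_\D v^2|F'|^2\,d\zeta\ge\beta A>0$, we obtain $q_{\bar E}^\Omega(w)\le0$, hence $E_1(\Omega)\le\bar E$.

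\textbf{Equality case, main obstacle, and consequence.} If $\Omega$ is a disk then $F$ is affine and every inequality above is an equality, and a direct scaling computation gives $E_1(\Omega)=E_1(\D)/R=\bar E$ for a disk of radius $R$. Conversely, equality in the theorem forces each intermediate estimate to be an equality, in particular $\int_\D v^2|F'|^2\,d\zeta=\beta A$; since $v^2-\beta>0$ in $\D$, this means the circular means of the subharmonic function $|F'|^2$ coincide with its value at the origin for all radii in $(0,1)$, which forces $|F'|^2$ to be harmonic on $\D$, i.e. $F''\equiv0$, i.e. $F$ affine, i.e. $\Omega$ a disk. The delicate step is precisely the lower bound for $\int_\D v^2|F'|^2\,d\zeta$: this is where the inradius enters, through the subharmonicity of $|F'|^2$, the normalization $F(0)=x_0$ giving $|F'(0)|\ge r_i$, and the arithmetic identity $\int_\D(v^2-\beta)\,d\zeta=\pi\beta$, which is exactly what Lommel's formula produces at $E_1(\D)$ — the argument where $J_0$ and $J_1$ coincide. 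Once Theorem~\ref{thm:isoper} is established, Theorem~\ref{thm:ineq} follows at once by combining it with the isoperimetric inequality $|\partial\Omega|^2\ge4\pi|\Omega|$ and $\pi r_i^2\le|\Omega|$.
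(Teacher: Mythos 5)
Your argument is correct and it lands on exactly the same bound, but the way you obtain the central estimate $\int_\D v^2|F'|^2\,d\zeta\ge\beta(\pi r_i^2+|\Omega|)$ (where $\beta=J_0(E_1(\D))^2$) is genuinely different from the paper's. The paper expands $|F'|^2=\bigl|\sum_{n\ge1} n c_n\zeta^{n-1}\bigr|^2$, applies Parseval to rewrite the integral as $2\pi\sum_n n|c_n|^2 M_n$ with $M_n:=n\int_0^1 J_0(E_1(\D)r)^2 r^{2n-1}\,dr$, then uses Chebyshev's inequality for monotone functions to show $M_n\ge\tfrac{n}{2n-1}M_1$, treats the $n=1$ term separately from $n\ge2$, and finally combines the area formula $|\Omega|=\pi\sum_n n|c_n|^2$ with Koebe's estimate $|c_1|\ge r_i$. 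You instead write $v^2=(v^2-\beta)+\beta$, note that $v^2-\beta\ge0$ is radial, and apply the sub-mean value inequality for the subharmonic function $|F'|^2$ directly to the first piece; Lommel's identity at $E_1(\D)$ (where $J_0=J_1$, giving $\int_\D(v^2-\beta)=\pi\beta$), Koebe's estimate, and $\int_\D|F'|^2=|\Omega|$ then deliver the same lower bound. Both routes rest on the same three ingredients, but yours packages them more transparently: the subharmonicity argument replaces the Chebyshev step and the $n=1$ versus $n\ge2$ bookkeeping, and it makes the rigidity analysis immediate — equality forces the circular means of $|F'|^2$ to equal $|F'(0)|^2$ for every radius, hence $|F'|^2$ harmonic, hence $F''\equiv0$, hence $F$ affine and $\Omega$ a disk — whereas the paper has to trace equality through the vanishing of the coefficients $c_n$, $n\ge2$. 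Everything else in your proof (the Szeg\H{o}-style transplantation of $J_0(E_1(\D)|\cdot|)$ via a Riemann map, conformal invariance of $\int|\partial_{\bar\zeta}v|^2$, the identity $\int_{\partial\D}v^2|F'|=\beta|\partial\Omega|$, the reduction to a quadratic in $E$, and the appeal to Theorem~\ref{thm:vf} together with Proposition~\ref{prop:monotonicity}) coincides with the paper's Steps 1--5.
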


Now, we have all the tools to prove Theorem \ref{thm:ineq}.
\begin{proof}[Proof of Theorem \ref{thm:ineq}] Using that $\pi r_i^2 \leq |\Omega|$ and the isoperimetric inequality we obtain $4\pi^2 r_i^2 \leq 4\pi |\Omega| \leq |\partial\Omega|^2$. It gives
\[
	|\partial\Omega|^2 + 8\pi E_1(\D)(E_1(\D) - 1)(\pi r_i^2 +|\Omega|) \leq  |\partial\Omega|^2(2E_1(\D) - 1)^2.
\]
Note that in the above inequalities, we have equality if and only if $\Omega$ is a disk and combining this bound with the one of Theorem \ref{thm:isoper} we get Theorem \ref{thm:ineq}. 
\end{proof}

In the rest of this section we focus on proving Theorem \ref{thm:isoper} and assume, without loss of generality, the following.
\begin{enumerate}[label=(\roman*)]
	\item $0\in \Omega$ is such that $r_i = \max_{x\in\partial\Omega}|x|$,
	\item $f : \D \to \Omega$ is a conformal map such that $f(0) = 0$ and we write
\[
	f(z) =  \sum_{n\geq1} c_n z^n,
\]
where $(c_n)_{n\geq1}$ is a sequence of complex numbers.
\end{enumerate}

Before going through the proof of Theorem \ref{thm:isoper}, we gather in the following paragraph some known properties linking the geometry of $\Omega$ with the conformal map $f$.

\subsubsection{Preliminaries}

The next proposition can be found in \cite[\S 3.10.2]{P16} and relates the area of $\Omega$ with the conformal map $f$.

\begin{prop}[Area formula]\label{eqn:area} There holds
\begin{equation*}
	|\Omega| = \pi \sum_{n\geq 1}n|c_n|^2.
\end{equation*}
\end{prop}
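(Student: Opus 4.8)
The plan is to compute $|\Omega|$ directly as the integral of the Jacobian of the conformal map $f$. Since $f:\D\to\Omega$ is a biholomorphism, it is in particular a (real) $C^\infty$-diffeomorphism of the open disk onto $\Omega$, and the Jacobian determinant of a holomorphic map is exactly $|f'(z)|^2$. Hence the change of variables formula gives
\[
	|\Omega| = |f(\D)| = \int_\D |f'(z)|^2\, dx,
\]
where we identify $z = x_1 + \rmi x_2$ and $dx$ denotes Lebesgue measure on $\D$.

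Next I would pass to polar coordinates and apply Parseval's identity on each circle. Writing $z = r e^{\rmi\theta}$ with $r\in[0,1)$, $\theta\in[0,2\pi)$, and $f'(z) = \sum_{n\geq1} n c_n z^{n-1}$ (a power series converging uniformly on $\{|z|=r\}$ for every $r<1$), one expands the modulus squared and integrates in $\theta$; all cross terms vanish by orthogonality of $\{e^{\rmi k\theta}\}_{k\in\Z}$ in $L^2(0,2\pi)$, leaving
\[
	\int_0^{2\pi} |f'(r e^{\rmi\theta})|^2\, d\theta = 2\pi \sum_{n\geq1} n^2 |c_n|^2 r^{2n-2},\quad r\in[0,1).
\]
Integrating this against $r\,dr$ over $[0,1)$ and exchanging the sum with the integral — legitimate by Tonelli's theorem since every term is nonnegative — yields
\[
	|\Omega| = 2\pi \sum_{n\geq1} n^2 |c_n|^2 \int_0^1 r^{2n-1}\, dr = 2\pi \sum_{n\geq1} n^2 |c_n|^2 \cdot \frac1{2n} = \pi \sum_{n\geq1} n |c_n|^2,
\]
which is the claimed identity; in particular the series converges because $\Omega$ is bounded.

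I do not expect a genuine obstacle here, the statement being classical (see \cite[\S 3.10.2]{P16}). The two points to keep in mind are that the change of variables formula applies precisely because $f$ is a holomorphic \emph{bijection} onto $\Omega$ — so that $f(\D)$ exhausts $\Omega$ and the (orientation-preserving) Jacobian is exactly $|f'|^2$ — and that the term-by-term integration in the last display is justified by positivity of the summands rather than by any delicate uniform estimate up to $|z|=1$. If one prefers to avoid invoking the change of variables on the open disk directly, the same computation on $\{|z|<\rho\}$ followed by $\rho\uparrow1$, together with monotone convergence for $|\Omega|$, gives the result as well.
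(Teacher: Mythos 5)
Your proof is correct and is the standard derivation: change of variables with Jacobian $|f'|^2$, expansion of $f'$ in a power series, Parseval on circles, and Tonelli to integrate term by term. The paper itself gives no proof and simply cites \cite[\S 3.10.2]{P16}; your argument is the one that reference contains, so there is nothing substantive to compare.
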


The second proposition is a consequence of the Schwarz lemma (see Koebe's estimate in \cite[Chap. I, Thm. 4.3]{GM05}). It gives a relation between the first coefficient $c_1$ of the conformal map $f$ and the inradius $r_i$.

\begin{prop}[Koebe's estimate]\label{prop:Koebe} There holds
\[
	|f'(0)| = |c_1| \geq r_i.
\]
\end{prop}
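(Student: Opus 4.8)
The plan is to obtain the inequality from the Schwarz lemma applied to the inverse of the conformal map $f$; this is exactly the ``easy half'' of the Koebe distortion estimate quoted in \cite[Chap. I, Thm. 4.3]{GM05}. The only geometric input needed is that, under the normalization~(i), the origin has been placed at the centre of a largest inscribed disk, so that $B(0,r_i)\subset\Omega$ and $\dist(0,\partial\Omega)=r_i$; this relies solely on the fact that the inradius of the bounded domain $\Omega$ is attained at some interior point, which we may take (after translation) to be $0$.

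With this in hand, first I would introduce the inverse map $g:=f^{-1}:\Omega\to\D$, which is holomorphic because $f$ is a conformal bijection, and which satisfies $g(0)=0$ together with $g'(0)=1/f'(0)=1/c_1$ (here $f'(0)=c_1\neq0$ since $f$ is univalent). Next, since $B(0,r_i)\subset\Omega$ and $g(\Omega)=\D$, the function
\[
	h:\D\to\D,\qquad h(\zeta):=g(r_i\,\zeta),
\]
is well defined, holomorphic, and satisfies $h(0)=0$. The Schwarz lemma then yields $|h'(0)|\leq1$; since $h'(0)=r_i\,g'(0)=r_i/c_1$, this reads $r_i\leq|c_1|=|f'(0)|$, which is the claim. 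Equivalently, one may simply invoke the upper bound $\dist(f(0),\partial\Omega)\leq|f'(0)|$ from \cite[Chap. I, Thm. 4.3]{GM05} with $f(0)=0$, which is the same Schwarz-lemma estimate.

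There is no serious obstacle here: once the correct auxiliary map is written down, the proof is a one-line application of the Schwarz lemma, and in particular it does \emph{not} require the Koebe quarter theorem. The only point deserving a little care, and the place where assumption~(i) is used, is the identification $\dist(0,\partial\Omega)=r_i$, i.e. that after the chosen normalization the origin realizes the inradius; everything else follows automatically from the conformality of $f$.
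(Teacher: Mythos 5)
Your proof is correct and is exactly the argument the paper has in mind: the paper does not write out a proof but states that the estimate ``is a consequence of the Schwarz lemma'' and cites \cite[Chap. I, Thm. 4.3]{GM05}, which is precisely your application of the Schwarz lemma to $h(\zeta)=f^{-1}(r_i\zeta)$. Your observation that only the inclusion $B(0,r_i)\subset\Omega$ from normalization (i) is needed (and not the quarter theorem) is accurate.
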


Finally, the last geometric relation between the conformal map $f$ and the geometry of $\Omega$ we need to prove Theorem \ref{thm:isoper} is that the perimeter $|\partial\Omega|$ of $\Omega$ can be expressed as
\begin{equation}
	|\partial\Omega| = \int_{0}^{2\pi}|f'(e^{i\theta})|d\theta.
\label{eqn:perim}
\end{equation}
\eqref{eqn:perim} is a simple consequence of the fact that $f|_{\mathbb{S}^1}$ is a parametrization of $\partial\Omega$.
\subsubsection{Proof of the upper bound on the spectral gap}

To prove Theorem \ref{thm:isoper}, we construct an adequate test function for $q_E^\Omega$ transplanting the eigenfunction of the unit disk $\D$ in the domain $\Omega$ thanks to the conformal map $f$. We obtain an upper bound on $\mu^\Omega(E)$ which is a second order polynomial in the spectral parameter $E>0$ and with coefficients depending on the geometry of $\Omega$. It translates into an optimization problem for the spectral parameter $E>0$ that we solve in the last step of the proof.

\begin{proof}[Proof of Theorem \ref{thm:isoper}]
%\paragraph{Step 1}

Let us go through all the steps of the proof.\\

\paragraph{Step 1} Let us denote by $J_0$ (resp. $J_1$) the Bessel function of the first kind of order $0$ (resp. of order $1$). For $x \in \D$, consider $u_0(x) = J_0\big(E_1(\D) |x|\big) \in H^1(\D) \subset \dom(q_{E_1(\D)}^\Omega)$. As explained in Remark \ref{rem:fundisk} $u(x) = (u_0(x), \rmi \frac{x_1 + ix_2}{|x|}J_1\big(E_1(\D) |x|\big))^\top$ is an eigenfunction of $D^\D $associated with $E_1(\D)$. Theorem \ref{thm:vf} implies
\begin{align}\nonumber
0 = q_{E_1(\D)}^\D(u_0) = &\  2\pi E_1(\D)^2\int_0^1 J_1\big(E_1(\D) r\big)^2 rdr - 2\pi E_1(\D)^2\int_0^1 J_0\big(E_1(\D) r\big)^2 r dr\\&\quad + 2\pi E_1(\D) J_0\big(E_1(\D)\big)^2.
\label{eqn:carvpdisk}
\end{align}

\paragraph{Step 2} For $x=(x_1,x_2)\in \Omega$, consider $v_0(x_1,x_2) = u_0(f^{-1}(x_1 +\rmi x_2)) \in H^1(\Omega) \subset \dom(q_E^\Omega)$. By the min-max principle, there holds
\begin{equation}
	\mu^\Omega(E) \leq \frac{q_E^\Omega(v_0)}{\|v_0\|_{L^2(\Omega)}^2} = \frac{\|\nabla v_0\|_{L^2(\Omega)}^2 + E \|v_0\|_{L^2(\partial\Omega)}^2}{\|v_0\|_{L^2(\Omega)}^2} - E^2,
	\label{eqn:ub1step}
\end{equation}
where we have used that $v_0$ is real valued to ensure that $\|\nabla v_0\|_{L^2(\Omega)} = 4\|\partial_{\bar z}v_0\|_{L^2(\Omega)}$.
\paragraph{Step 3} Now, as $f$ is a conformal map, we know that
\begin{equation}
	\|\nabla v_0\|_{L^2(\Omega)} = \|\nabla u_0\|_{L^2(\D)}  = 2 \pi E_1(\D)^2 \int_0^1 J_1\big(E_1(\D) r\big)^2 r dr.
\label{eqn:eqnfinal1}
\end{equation}
Using \eqref{eqn:perim}, we obtain
\begin{equation}
	\|v_0\|_{L^2(\partial\Omega)}^2 = \int_{0}^{2\pi} |v_0(f(e^{\rmi \theta}))|^2 |f'(e^{\rmi \theta})| d\theta = J_0\big(E_1(\D)\big)^2 |\partial \Omega|.
	\label{eqn:eqnfinal2}
\end{equation}
Finally, the last integral reads
\begin{align}\nonumber
	\|v_0\|_{L^2(\Omega)}^2 =& \int_0^1\int_0^{2\pi}|u_0(r)|^2 |f'(re^{\rmi \theta})|^2 r dr d\theta\\\nonumber=& \int_0^1 |u_0(r)|^2 \Big(\int_0^{2\pi}\Big|\sum_{n\geq 1} n c_n r^{n-1} e^{\rmi (n-1)\theta}\Big|^2 d\theta\Big) r dr\\
	=& 2 \pi \sum_{n\geq 1} n |c_n|^2 M_n,\quad \text{where for } n\geq 1, M_n := n\int_{0}^1J_0\big(E_1(\D)r\big)^2 r^{2n-1} dr,
	\label{eqn:eqnfinal3}
\end{align}
where we have used Parseval identity.

\paragraph{Step 4}
Taking into account \eqref{eqn:eqnfinal1},\eqref{eqn:eqnfinal2} and \eqref{eqn:eqnfinal3}, \eqref{eqn:ub1step} becomes
\begin{align}
	\nonumber\mu^\Omega(E) \leq  2\pi E_1(\D)^2 \frac{\displaystyle\int_{0}^1J_1\big(E_1(\D) r\big)^2 rdr}{\displaystyle2\pi \sum_{n\geq 1}n|c_n|^2M_n} &\ - E^2\\&\ + E\frac{J_0\big(E_1(\D)\big)^2|\partial\Omega|}{\displaystyle2\pi \sum_{n\geq 1}n|c_n|^2M_n}.\label{eqn:ub1}
	\end{align}

Let us find a lower bound on the sequence $(M_n)_{n\geq 1}$. Using first an integration by parts we find
\[
	M_n = \frac12J_0\big(E_1(\D)\big)^2 + \frac{E_1(\D)}2\int_{0}^1J_0\big(E_1(\D) r\big)J_1\big(E_1(\D) r\big) r^{2n} dr.
\]
In particular, for $n=1$ it gives
\begin{align}
	M_1 = \int_0^1 J_0\big(E_1(\D) r\big)^2 rdr & =J_0\big(E_1(\D) \big)^2\label{eqn:intJ0}  \\\nonumber & = E_1(\D) \int_{0}^1 J_0\big(E_1(\D) r\big) J_1(E_1(\D) r) r^2 dr.
\end{align}
Now, for $n\geq1$, one notices that $h_1 := \Big(r \mapsto (J_0J_1)\big(E_1(\D)r\big)r^2\Big)$ and $h_2 := \Big(r \mapsto r^{2n-2}\Big)$ are non-decreasing functions on $[0,1]$ and by Chebyschev's inequality for non-decreasing functions, we obtain
\[
	M_n \geq \frac12 M_1 + \frac12M_1 \int_{0}^1r^{2n-2}dr = \frac{n}{2n-1}M_1.
\]
In particular, we have
\begin{align}
	\nonumber 2\pi \sum_{n\geq 1}n |c_n|^2 M_n &\geq J_0\big(E_1(\D)\big)^2\Big(2\pi |c_1|^2 + 2\pi\sum_{n\geq 2}\frac{n^2}{2n-1}|c_n|^2\Big)\\\nonumber& \geq J_0\big(E_1(\D)\big)^2\Big(2\pi |c_1|^2 + \pi\sum_{n\geq 2}n|c_n|^2\Big) \\\nonumber
	& = J_0\big(E_1(\D)\big)^2 (\pi |c_1|^2 + |\Omega|)\\
	& \label{eqn:lb1}\geq J_0\big(E_1(\D)\big)^2(\pi |r_i|^2 + |\Omega|),
\end{align} 
where we have used Proposition \ref{eqn:area} and Proposition \ref{prop:Koebe}. Remark that in the first two inequalities above we have equality if and only if $c_n = 0$ for all $n\geq2$. Similarly, in the last equality, we have equality if and only if $|c_1| = r_i$. In particuliar there is equality in the above inequalities if and only if $f(z) = c_1 z$ and $\Omega$ is a disk centered in $0$ of radius $r_i$.

Combining \eqref{eqn:carvpdisk} and \eqref{eqn:lb1} in \eqref{eqn:ub1}, we obtain

\begin{multline*}
	\mu^\Omega(E) \leq - E^2 + \frac{2 \pi E_1(\D)^2 \int_0^1 J_0\big(E_1(\D)r\big)^2 rdr + J_0\big(E_1(\D)\big)^2\Big( E |\partial\Omega| - 2\pi E_1(\D)\Big)}{J_0\big(E_1(\D)\big)^2(\pi r_i^2 + |\Omega|)}.
\end{multline*}
Using \eqref{eqn:intJ0}, we obtain
\begin{align*}
	\mu^\Omega(E) &\leq -E^2 + \frac{2\pi E_1(\D)^2 + \big(E |\partial\Omega| - 2\pi E_1(\D) \big)}{\pi r_i^2 + |\Omega|}\\  &= \frac{\big(2\pi E_1(\D)^2 - (\pi r_i^2 + |\Omega|)E^2\big) + \big(E |\partial\Omega| - 2\pi E_1(\D)\big)}{\pi r_i^2 + |\Omega|}\\ &= \frac{P(E)}{\pi r_i^2 + |\Omega|}, \quad P(E) := -E^2(\pi r_i^2 + |\Omega|)+ E |\partial\Omega| +2\pi E_1(\D)\big(E_1(\D)-1\big).
\end{align*}
\paragraph{Step 5}
Remark that by \eqref{eqn:lbBFSVdB}, there holds $E_1(\D) -1 \geq \sqrt{2} - 1 >0$. In particular, the discriminant of $P$ satisfies
\[
	\delta(P) := |\partial\Omega|^2 + 8\pi E_1(\D)\big(E_1(\D) - 1\big)(\pi r_i^2 +|\Omega|)>0.
\]
Thus, $P$ has two real roots and as $P(0)>0$, the only positive root is
\[
	E_{\rm crit} := \frac{|\partial\Omega| + \sqrt{|\partial\Omega|^2 + 8\pi E_1(\D)\big(E_1(\D) - 1\big)(\pi r_i^2 + |\Omega|)}}{2(\pi r_i^2 +|\Omega|)}.
\]
One obtains $\mu^\Omega(E_{\rm crit}) \leq \frac{P(E_{\rm crit})}{\pi r_i^2 + |\Omega|} = 0$ and by Proposition \ref{prop:monotonicity} and Theorem \ref{thm:vf} we get
\[
	E_1(\D) \leq E_{\rm crit}
\]
which is precisely Theorem \ref{thm:isoper}.
\end{proof}

\section{About the Faber-Krahn conjecture}\label{sec:aboutFK}
In this section we discuss how the variational formulation established in Theorem \ref{thm:vf} can be used to investigate Conjecture \ref{conj:FK}. \S \ref{par:newconj} deals with a new Faber-Krahn type conjecture for the operator $H_E^\Omega$ introduced in \S \ref{subsec:para1} and how this new conjecture is related to Conjecture \ref{conj:FK}. In \S \ref{par:BosselDaners}, we discuss how the well-known Bossel-Daners inequality for the Robin Laplacian is linked to Conjecture \ref{conj:FK} (see \cite{Boss86,Dan06}).
\subsection{A new conjecture}\label{par:newconj}
Let us introduce a new Faber-Krahn type conjecture for $\mu^\Omega(E)$, the first eigenvalue of $H_E^\Omega$.
\begin{conj}Let $\Omega \subset \mathbb{R}^2$ be  $C^\infty$ and simply connected. For all $E>0$, there holds
\[
	\mu^\Omega(E) \geq \frac{\pi}{|\Omega|}\mu^\D\Big(\sqrt{\frac{|\Omega|}{\pi}}E\Big).
\]
Moreover, there is equality in the above inequality if and only if $\Omega$ is a disk.
\label{conj:2}
\end{conj}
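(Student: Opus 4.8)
\medskip

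The plan is to recast Conjecture~\ref{conj:2} as an area‑constrained Faber--Krahn inequality for the family $H_E^\Omega$ and then to adapt the rearrangement method behind the Bossel--Daners inequality, whose relevance is the subject of \S\ref{par:BosselDaners}. First I would record the dilation identity: for $t>0$ and $t\Omega:=\{tx:x\in\Omega\}$, inserting $\tilde u(y):=u(y/t)$ in \eqref{eqn:deffq} and using that $\partial_{\bar z}$ is homogeneous of degree $-1$ while the area and arc‑length measures scale by $t^2$ and $t$, one gets $q_E^{t\Omega}(\tilde u)=q_{tE}^{\Omega}(u)$ and $\|\tilde u\|_{L^2(t\Omega)}^2=t^2\|u\|_{L^2(\Omega)}^2$, hence
\[
	\mu^{t\Omega}(E)=t^{-2}\,\mu^{\Omega}(tE),\qquad E>0 .
\]
Applying this with $t=\sqrt{|\Omega|/\pi}$ to the unit disk shows that, writing $\D_\Omega$ for the disk with $|\D_\Omega|=|\Omega|$, one has $\mu^{\D_\Omega}(E)=\tfrac{\pi}{|\Omega|}\mu^{\D}\big(\sqrt{|\Omega|/\pi}\,E\big)$, so Conjecture~\ref{conj:2} is \emph{equivalent} to $\mu^{\Omega}(E)\ge\mu^{\D_\Omega}(E)$ for every $E>0$, with equality iff $\Omega$ is a disk; that is, among $C^\infty$ simply connected domains of given area the disk minimises the principal eigenvalue of $H_E^\Omega$ for each fixed $E$. (Taking $E=E_1(\Omega)$, where $\mu^\Omega(E)=0$, and using Proposition~\ref{prop:monotonicity}, this in turn yields Conjecture~\ref{conj:FK}.) After one more dilation one may assume $|\Omega|=\pi$ and compare with $\D$.

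\medskip

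The second ingredient is a rewriting of $q_E^\Omega$ in polar coordinates for $u$. From $4|\partial_{\bar z}u|^2=|\nabla u|^2-2\,\Im\big(\overline{\partial_1 u}\,\partial_2 u\big)$ and an integration by parts on the closed curve $\partial\Omega$, writing $u=w\,e^{\rmi\varphi}$ with $w=|u|$, one obtains
\[
	q_E^\Omega(u)=\int_\Omega|\nabla w|^2\,dx+\int_\Omega w^2|\nabla\varphi|^2\,dx-E^2\int_\Omega w^2\,dx+\int_{\partial\Omega}w^2\,(E-\partial_t\varphi)\,ds ,
\]
the first, third and fourth terms being a Robin‑type Rayleigh quotient with (generally non‑constant) boundary weight $E-\partial_t\varphi$, and the second term being nonnegative and vanishing for the radial ground state of the disk. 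I would then work with a normalised first eigenfunction $u$ of $H_E^\Omega$ (it exists since $H_E^\Omega$ has compact resolvent by Proposition~\ref{prop:descformdom}, and $u\in H^1(\Omega)$ with $\partial_{\bar z}u\in H^1(\Omega)$ by Proposition~\ref{prop:domope}), having first established the two preliminary facts that $w=|u|>0$ in $\Omega$ and — since $\Omega$ is simply connected — that the winding of $\varphi$ along $\partial\Omega$ vanishes, so that $\varphi$ is globally defined. For such $u$, Proposition~\ref{prop:domope} gives $-\Delta u=(\mu^\Omega(E)+E^2)u$ with $E^2+\mu^\Omega(E)>0$, together with $\operatorname{div}(w^2\nabla\varphi)=0$ in $\Omega$ and the boundary relations $\partial_n w+(E-\partial_t\varphi)w=0$ and $w^2\partial_n\varphi=-\tfrac12\partial_t(w^2)$ on $\partial\Omega$.

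\medskip

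The heart of the proof would be a level‑set analysis of $w$ in the spirit of Bossel--Daners. For a.e.\ $t>0$ put $\Omega_t:=\{w>t\}$, $P(t):=\mathcal{H}^1(\{w=t\}\cap\Omega)$, $b(t):=\mathcal{H}^1(\{w>t\}\cap\partial\Omega)$; combine the coarea formula and the Cauchy--Schwarz inequality on the level curves $\{w=t\}$ with the relative isoperimetric estimate $P(t)+b(t)\ge 2\sqrt{\pi\,|\Omega_t|}$ to produce a differential inequality for $t\mapsto|\Omega_t|$ and for the truncated Dirichlet, phase and boundary energies; integrate it and compare with the one‑dimensional identity satisfied by the radial profile $r\mapsto J_0\!\big(\sqrt{E^2+\mu^{\D_\Omega}(E)}\,r\big)$ on $\D_\Omega$. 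Tracking the cases of equality — in the isoperimetric inequality and in Cauchy--Schwarz — should force every $\Omega_t$, hence $\Omega$, to be a disk, yielding the equality statement. The value $E=0$ causes no difficulty because $\mu^\Omega(E)>0$ for small $E>0$ by Proposition~\ref{prop:monotonicity}.

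\medskip

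The main obstacle, and the genuinely new feature compared with the Robin Laplacian, is the phase. Using the Euler--Lagrange relations above (and the vanishing winding) one finds that at the eigenfunction
\[
	q_E^\Omega(u)=\int_\Omega|\nabla w|^2\,dx-\int_\Omega w^2|\nabla\varphi|^2\,dx-E^2\int_\Omega w^2\,dx+E\int_{\partial\Omega}w^2\,ds ,
\]
so the phase \emph{lowers} $q_E^\Omega$, strictly unless $w$ is constant on $\partial\Omega$ (as it is on the disk). Consequently one cannot merely discard the phase and quote Bossel--Daners for the plain Robin problem: that would only bound the Robin eigenvalue of $\Omega$ below by that of $\D_\Omega$, whereas $\mu^\Omega(E)$ sits \emph{below} that Robin eigenvalue. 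The delicate point is to show that the ``rearrangement gain'' for $w$ outweighs the ``phase loss'' $\int_\Omega w^2|\nabla\varphi|^2\,dx$; here I would exploit the constraint $\operatorname{div}(w^2\nabla\varphi)=0$, which limits the concentration of $|\nabla\varphi|^2$ and ties $\partial_n\varphi$ on each curve $\{w=t\}$ to $\partial_t(w^2)$, so that the phase terms can be re‑expressed as fluxes across the level curves and carried through the level‑set inequalities together with the Dirichlet term — with the slack measured by $\partial_t(w^2)$ on $\partial\Omega$, which vanishes precisely in the equality case. Making this quantitative, along with the two preliminary facts ($|u|>0$ in $\Omega$, and zero winding of $\arg u$ on $\partial\Omega$), is where the real work lies; once it is in place the rearrangement and the equality discussion should follow the Bossel--Daners template.
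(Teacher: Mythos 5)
The statement you are proving is Conjecture~\ref{conj:2}, which the paper does \emph{not} prove: it is presented as an open problem. What the paper establishes after stating it is only the \emph{equivalence} of Conjecture~\ref{conj:2} with Conjecture~\ref{conj:FK}, via scaling and the three-case monotonicity argument built on Proposition~\ref{prop:monotonicity}; the conjecture itself is then supported only by the numerics of \S\ref{sec:numerics}. Your opening paragraph recovers that same equivalence through the dilation identity $\mu^{t\Omega}(E)=t^{-2}\mu^{\Omega}(tE)$, so on this preliminary point you and the paper agree. The remainder of your proposal, however, attempts what the authors leave open — an actual proof — by adapting the Bossel--Daners rearrangement scheme, and there the gaps are genuine.

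The polar-decomposition computations are correct: $4|\partial_{\bar z}u|^2=|\nabla w|^2+w^2|\nabla\varphi|^2-2\,\Im(\overline{\partial_1u}\,\partial_2u)$, the integration by parts yielding the boundary term $-\int_{\partial\Omega}w^2\partial_t\varphi\,ds$, the Euler--Lagrange relations $\operatorname{div}(w^2\nabla\varphi)=0$, $\partial_n w+(E-\partial_t\varphi)w=0$, $w^2\partial_n\varphi=-\frac12\partial_t(w^2)$, and the resulting on-shell identity $q_E^\Omega(u)=\int_\Omega|\nabla w|^2-\int_\Omega w^2|\nabla\varphi|^2-E^2\int_\Omega w^2+E\int_{\partial\Omega}w^2$ all check out, and you correctly diagnose that discarding the phase only reproduces $\lambda_{\rm Rob}^\Omega(E)-E^2\ge\mu^\Omega(E)$, the \emph{wrong-way} inequality already recorded in \S\ref{par:BosselDaners}. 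But the decisive step — showing that the rearrangement gain on $w$ dominates the phase loss $\int_\Omega w^2|\nabla\varphi|^2$ — is announced (``here I would exploit\,\dots'') and never carried out, so no inequality is produced. There is also an unaddressed structural gap: you assume $|u|>0$ in $\Omega$ and then infer that $\varphi$ is single-valued. For Robin or Dirichlet Laplacians the positivity of the principal eigenfunction comes from Perron--Frobenius-type positivity preservation of the resolvent, but $H_E^\Omega$ is \emph{not} positivity preserving — its boundary condition has an oblique, imaginary part and the first eigenfunction is genuinely complex, as the argument plots in Figure~\ref{fig:figure5} illustrate — so the nodal-set-free property of $u$, and hence the global definition of $\varphi$, requires a separate argument that is missing. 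Until both the phase-vs-rearrangement comparison and the non-vanishing of the ground state are established, this is a plausible research programme, not a proof, of a statement the paper itself leaves as an open conjecture.
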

It turns out Conjecture \ref{conj:2} is equivalent to Conjecture \ref{conj:FK} and this is what we prove in the rest of this paragraph.
\begin{proof} First, remark that a simple scaling argument gives, for all $E>0$, that
\[
	\sqrt{\frac{\pi}{|\Omega|}}E_1(\D) = E_1(\rho \D), \quad \mu^{\rho \D}(E) = \frac{\pi}{|\Omega|}\mu^{\D}\Big(\sqrt{\frac{|\Omega|}{\pi}}E\Big)\quad \text{where } \rho := \sqrt{\frac{|\Omega|}{\pi}}.
\]
Second, assume that Conjecture \ref{conj:FK} holds true. If $\Omega$ is a disk, there holds $\mu^\Omega(E) = \mu^{\rho\D}(E)$ so now, we assume that $\Omega$ is not a disk. Let us prove that for all $E>0$ there holds
\[
	\mu^\Omega(E) > \mu^{\rho \D}(E).
\]
Let us reason by \textit{reduction ad absurdum} and assume there exists $E_\star > 0$ such that $\mu^\Omega(E_\star) \leq \mu^{\rho \D}(E_\star)$.

\underline{Case $E_\star < E_1(\rho\D)$.} By hypothesis and Proposition \ref{prop:monotonicity}, there holds
\begin{align*}
	\mu^\Omega(E_\star) \leq \mu^{\rho \D}(E_\star) &\leq \frac{E_1(\rho\D)}{E_\star} \mu^{\rho\D}\big(E_1(\rho\D)\big) - E_1(\rho\D)(E_1(\rho\D) - E_\star)\\
	& = - E_1(\rho\D)(E_1(\rho\D) - E_\star) < 0.
\end{align*}
In particular, $\mu^\Omega(E_\star) < 0$ which implies $E_\star > E_1(\Omega)$. However, if Conjecture \ref{conj:FK} holds true we obtain $E_\star > E_1(\Omega) > E_1(\rho \D)$ which contradicts our hypothesis.

\underline{Case $E_1(\rho\D) \leq E_\star \leq E_1(\Omega)$.} By hypothesis and Proposition \ref{prop:monotonicity}, there holds
\[
	0 \leq \mu^\Omega(E_\star) \leq \mu^{\rho \D}(E_\star) \leq 0,
\]
which contradicts our hypothesis because we obtain $E_\star = E_1(\Omega) = E_1(\rho\D)$ but we have assumed that $\Omega$ is not a disk thus, this equality can not hold if Conjecture \ref{conj:FK} holds true.

\underline{Case $E_\star > E_1(\Omega)$.} By hypothesis and Proposition \ref{prop:monotonicity}, there holds
\begin{align*}
	0 = \mu^\Omega\big(E_1(\Omega)\big) &\leq \frac{E_\star}{E_1(\Omega)}\mu^\Omega(E_\star) - E_\star\big(E_\star - E_1(\Omega)\big)\\& \leq \frac{E_\star}{E_1(\Omega)}\mu^{\rho \D}(E_\star) - E_\star\big(E_\star - E_1(\Omega)\big).
\end{align*}
In particular, we obtain $\mu^{\rho \D}(E_\star) \geq E_1(\Omega)\big(E_\star -E_1(\Omega)\big) > 0$. Hence, $E_\star < E_1(\rho \D)$ which contradicts Conjecture \ref{conj:FK}.

Consequently, we have proved that if Conjecture \ref{conj:FK} holds true so does Conjecture \ref{conj:2}.

Finally, let us assume that Conjecture \ref{conj:2} holds true. If $\Omega$ is a disk, we obtain that for all $E> 0$, $\mu^\Omega(E) = \mu^{\rho\D}(E)$. In particular, in $E = E_1(\Omega)$ we get $\mu^{\rho\D}\big(E_1(\Omega)\big) = 0$ and $E_1(\rho \D) = E_1(\Omega)$.

When $\Omega$ is not a disk, for all $E>0$ there holds $\mu^{\rho \D}(E) < \mu^\Omega(E)$. In $E=E_1(\Omega) $ we obtain $\mu^{\rho\D}\big(E_1(\Omega)\big) < 0$ and by Proposition \ref{prop:monotonicity} we obtain $E_1(\rho\D) < E_1(\Omega)$ which is precisely Conjecture \ref{conj:FK}.
\end{proof}
\subsection{Link with the Bossel-Daners inequality}\label{par:BosselDaners}

The first eigenvalue of the Robin Laplacian with positive parameter $E>0$ in the domain $\Omega$, denoted $\lambda_{\rm Rob}^\Omega(E)$, is given by the variational characterization
\[
	\lambda_{\rm Rob}^\Omega(E) := \inf_{u \in C^\infty(\overline{\Omega})\setminus\{0\}} \frac{\|\nabla u\|_{L^2(\Omega)}^2 + E \int_{\partial\Omega}|u|^2 ds}{\|u\|_{L^2(\Omega)}^2}
\]
and the Bossel-Daners inequality states that
\begin{equation}
	\lambda_{\rm Rob}^\Omega(E) \geq \frac{\pi}{|\Omega|}\lambda_{\rm Rob}^\D\Big(\sqrt{\frac{|\Omega|}{\pi}}E\Big),
	\label{eqn:Boss-Dan}
\end{equation}
with equality if and only if $\Omega$ is a disk.
Note that the structure of \eqref{eqn:Boss-Dan} is similar to that of Conjecture \ref{conj:2} and it turns out they are intimately connected. This is the purpose of the following proposition.

\begin{prop} Conjecture \ref{conj:FK} implies the Bossel-Daners inequality \eqref{eqn:Boss-Dan}.
\end{prop}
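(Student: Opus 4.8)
The plan is to bridge the Robin quadratic form and the form $q_E^\Omega$, and then invoke the reformulation of Conjecture \ref{conj:FK} provided by Conjecture \ref{conj:2}. Set $\rho := \sqrt{|\Omega|/\pi}$, so that $|\rho\D| = |\Omega|$; the scaling of the Robin Laplacian gives $\lambda_{\rm Rob}^{\rho\D}(E) = \frac{\pi}{|\Omega|}\lambda_{\rm Rob}^\D\big(\sqrt{|\Omega|/\pi}\,E\big)$, and the scaling identity for $q_E^\Omega$ already used in \S\ref{par:newconj} gives $\mu^{\rho\D}(E) = \frac{\pi}{|\Omega|}\mu^\D\big(\sqrt{|\Omega|/\pi}\,E\big)$. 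In these terms \eqref{eqn:Boss-Dan} is the statement $\lambda_{\rm Rob}^\Omega(E) \ge \lambda_{\rm Rob}^{\rho\D}(E)$ for all $E>0$, whereas Conjecture \ref{conj:2} reads $\mu^\Omega(E) \ge \mu^{\rho\D}(E)$ for all $E>0$, with equality iff $\Omega$ is a disk.

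The first ingredient is the elementary comparison, valid for any admissible domain and any $E>0$,
\[
	\mu^\Omega(E) \le \lambda_{\rm Rob}^\Omega(E) - E^2 .
\]
It follows by testing the min-max \eqref{eqn:firstminmax} against real-valued $u\in C^\infty(\overline{\Omega}) \subset \dom(q_E^\Omega)$: for such $u$ one has $4|\partial_{\bar z}u|^2 = |\nabla u|^2$ pointwise, so $q_E^\Omega(u)/\|u\|_{L^2(\Omega)}^2 = \big(\|\nabla u\|_{L^2(\Omega)}^2 + E\|u\|_{L^2(\partial\Omega)}^2\big)/\|u\|_{L^2(\Omega)}^2 - E^2$, and the Robin ground state may be taken real.

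The second, and decisive, ingredient is that this comparison is an equality on the disk, i.e. $\mu^\D(E) = \lambda_{\rm Rob}^\D(E) - E^2$ for all $E>0$, whence by scaling $\mu^{\rho\D}(E) = \lambda_{\rm Rob}^{\rho\D}(E) - E^2$. Only the inequality ``$\ge$'' is new, and I would prove it using the rotational symmetry of $H_E^\D$. Writing $u = \sum_{n\in\Z} g_n(r)e^{\rmi n\theta}$ and using $\partial_{\bar z}\big(g_n(r)e^{\rmi n\theta}\big) = \frac12 e^{\rmi(n+1)\theta}(g_n' - \frac nr g_n)$ and $\int_0^1(|g_n|^2)'\,\dd r = |g_n(1)|^2$ for $n\neq0$, one finds
\[
	q_E^\D(u) + E^2\|u\|_{L^2(\D)}^2 = 2\pi\sum_{n\in\Z}\Big( \int_0^1 \big(|g_n'|^2 + \frac{n^2}{r^2}|g_n|^2\big)\, r\,\dd r + (E-n)|g_n(1)|^2 \Big),
\]
so that on each angular channel $H_E^\D + E^2$ is a one-dimensional Robin-type Bessel operator with boundary parameter $E-n$ and nonnegative centrifugal potential $n^2/r^2$. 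The channel $n=0$ is exactly the radial Robin form with parameter $E$, whose bottom is $\lambda_{\rm Rob}^\D(E)$; for $n\le-1$ the parameter $E+|n|>E$ together with the nonnegative potential only raises the bottom; and the remaining claim — that each channel $n\ge1$ also has bottom $\ge\lambda_{\rm Rob}^\D(E)-E^2$, equivalently that the first positive root of $E J_n = x J_{n+1}$ is no smaller than the first positive root of $E J_0 = x J_1$ for all $E>0$ — is the step I expect to be the main obstacle. It can be settled by a Sturm–Liouville comparison: the bottom $\nu_n^\D(E)$ of channel $n$ is concave in $E$, vanishes at $E=0$ with derivative $2(n+1)$ (against $2$ for $\nu_0^\D$, computed from the eigenfunctions $r^n$ and $1$), satisfies $\nu_n^\D(E_1(\D))\ge0=\nu_0^\D(E_1(\D))$ (the equality because $\lambda_{\rm Rob}^\D(E_1(\D))=E_1(\D)^2$, which is the defining relation $J_0(E_1(\D))=J_1(E_1(\D))$ of Remark \ref{rem:fundisk}, and the inequality because $\mu^\D(E_1(\D))=0$ by Theorem \ref{thm:vf}), and has a strictly larger limit at $+\infty$ (namely $j_{n,1}^2-j_{0,1}^2>0$, with $j_{\cdot,1}$ the first positive zeros of the Bessel functions); combined with standard monotonicity of Bessel zeros this pins the ground state of $H_E^\D$ to the channel $n=0$. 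Equivalently, one checks directly that the radial Robin eigenfunction $J_0\big(\sqrt{\lambda_{\rm Rob}^\D(E)}\,|x|\big)$ belongs to $\dom(H_E^\D)$ — its oblique boundary condition reducing to the Robin one because it is radial, cf. Proposition \ref{prop:domope} — and that it realizes the first eigenvalue.

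Granting both ingredients and assuming Conjecture \ref{conj:FK}, hence Conjecture \ref{conj:2}, we conclude
\[
	\lambda_{\rm Rob}^\Omega(E) \ge \mu^\Omega(E) + E^2 \ge \mu^{\rho\D}(E) + E^2 = \lambda_{\rm Rob}^{\rho\D}(E) = \frac{\pi}{|\Omega|}\lambda_{\rm Rob}^\D\Big(\sqrt{\frac{|\Omega|}{\pi}}\,E\Big),
\]
which is \eqref{eqn:Boss-Dan}; moreover, if $\Omega$ is not a disk, the equality case of Conjecture \ref{conj:2} makes the middle inequality strict, so the Bossel–Daners inequality is strict as well, recovering its rigidity statement. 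Everything except the disk identity is bookkeeping; the latter is the genuine difficulty, because it requires ruling out that some element of the Hardy space $\cH_{\rm h}^2(\D)$ (present in $\dom(q_E^\D)$ but not in $H^1$) undercut the radial Robin eigenfunction, which is precisely what the channel-by-channel Bessel analysis above controls.
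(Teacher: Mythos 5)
Your overall route is the same as the paper's: reduce to Conjecture~\ref{conj:2}, establish $\mu^\Omega(E)\leq\lambda_{\rm Rob}^\Omega(E)-E^2$ by restricting the min-max \eqref{eqn:firstminmax} to real-valued test functions, show equality on the disk, and chain inequalities; the scaling identities and the rigidity bookkeeping at the end are also the paper's. The one substantive place where you go beyond the paper is the disk identity $\mu^\D(E)=\lambda_{\rm Rob}^\D(E)-E^2$. The paper disposes of it with the single remark that a ground state of $H_E^\D$ ``can be picked real-valued'' and offers no justification; you correctly flag this as the genuinely non-trivial point, since $\dom(q_E^\D)$ contains the Hardy space, the ground state could \emph{a priori} sit in an angular channel $n\geq1$, and the effective boundary parameter $E-n$ in that channel is negative for $n>E$. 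Your reduction of the question, via the angular decomposition and the Bessel recurrence, to monotonicity in $n$ of the first positive root of $E\,J_n(x)=x\,J_{n+1}(x)$ is the right target.

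Where the argument still falls short is in the last step. The list you assemble — $\nu_n^\D$ concave in $E$, $\nu_n^\D(0)=\nu_0^\D(0)=0$, $(\nu_n^\D)'(0)=2(n+1)>2=(\nu_0^\D)'(0)$, $\nu_n^\D(E_1(\D))\geq 0=\nu_0^\D(E_1(\D))$, and $\nu_n^\D-\nu_0^\D\to j_{n,1}^2-j_{0,1}^2>0$ as $E\to\infty$ — is consistent with $\nu_n^\D\geq\nu_0^\D$ but does not imply it: the difference of two concave functions need not be concave, so nothing in this list rules out the two graphs crossing twice somewhere beyond $E_1(\D)$ and producing an interval on which $\nu_n^\D<\nu_0^\D$. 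What one actually needs is a single Bessel inequality, namely that the ratio $x\,J_{n+1}(x)/J_n(x)$ is non-increasing in $n$ on the relevant interval (equivalently, that the first positive root of $E\,J_n(x)=x\,J_{n+1}(x)$ is non-decreasing in $n$ for each fixed $E>0$); once that is in hand, the concavity, the value at $E_1(\D)$, and the limit at infinity all become superfluous. So you have correctly diagnosed the step the paper leaves implicit, and the channel decomposition is the right machinery, but the heuristics you offer do not yet prove that step.
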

\begin{proof} As Conjecture \ref{conj:FK} is equivalent to Conjecture \ref{conj:2} as discussed in \S \ref{par:newconj}, we can assume that Conjecture \ref{conj:2} holds. Let us start by remarking that for all $E>0$, if $u\in \dom(H_E^\D)$ is a normalized eigenfunction associated with $\mu^{\D}(E)$ then $u$ can be picked real-valued. Hence, we get
\begin{align}
	\nonumber\mu^{\D}(E) &= \inf_{v \in C^\infty(\overline{\D},\mathbb{R})} \frac{\|\nabla v\|_{L^2(\D)}^2 - E^2\|v\|_{L^2(\D)}^2 + \int_{\partial\D}|v|^2 ds}{\|v\|_{L^2(\D)}^2}\\
	& = \lambda_{\rm Rob}^{\D}(E) - E^2.\label{eqn:minfin2}
\end{align}
Now, we remark that for any domain $\Omega$ there holds
\begin{align}\nonumber
	\lambda_{\rm Rob}^\Omega(E) - E^2 &= \inf_{v \in C^\infty(\overline{\Omega},\mathbb{R})\setminus\{0\}} \frac{\|\nabla v\|_{L^2(\Omega)}^2 - E^2\|v\|_{L^2(\Omega)}^2 + E \int_{\partial\Omega}|v|^2 ds}{\|v\|_{L^2(\Omega)}^2}\\&\nonumber = \inf_{v \in C^\infty(\overline{\Omega},\mathbb{R})\setminus\{0\}} \frac{4\|\partial_{\bar z} v\|_{L^2(\Omega)}^2 - E^2\|v\|_{L^2(\Omega)}^2 + E \int_{\partial\Omega}|v|^2 ds}{\|v\|_{L^2(\Omega)}^2}\\
	& \nonumber\geq \inf_{v \in \dom(q_E^\Omega))\setminus\{0\}} \frac{4\|\partial_{\bar z} v\|_{L^2(\Omega)}^2 - E^2\|v\|_{L^2(\Omega)}^2 + E \int_{\partial\Omega}|v|^2 ds}{\|v\|_{L^2(\Omega)}^2}\\
	& = \mu^\Omega(E).\label{eqn:eqmin1fin}
\end{align}
Hence, using \eqref{eqn:minfin2} and \eqref{eqn:eqmin1fin}, we get
\[
	\lambda_{\rm Rob}^\Omega(E) - E^2 \geq \mu^\Omega(E) \geq \frac{\pi}{|\Omega|}\mu^\D\Big(\sqrt{\frac{|\Omega|}{\pi}}E\Big) = \frac{\pi}{|\Omega|}\lambda_{\rm Rob}^\D\Big(\sqrt{\frac{|\Omega|}{\pi}}E\Big) - E^2.
\]
If $\Omega$ is a disk, all the above inequalities are equalities. Else, we obtain
\[
	\lambda_{\rm Rob}^\Omega(E) > \frac{\pi}{|\Omega|}\lambda_{\rm Rob}^\D\Big(\sqrt{\frac{|\Omega|}{\pi}}E\Big),
\]
which is precisely the Bossel-Daners inequality \eqref{eqn:Boss-Dan}.
\end{proof}

\section{Numerics}\label{sec:numerics}
The goal of this section is to illustrate numerically some theoretical results discussed in the previous sections and to support the validity of Conjecture \eqref{conj:FK}.

In \S \ref{nummet}, we discuss the two numerical schemes we have employed in \S \ref{numres} in order to study the principal eigenvalue of the Dirac operator with infinite mass boundary conditions in various domains $\Omega$. We also discuss the structure of the associated eigenfunctions.
\subsection{Numerical Methods}
\label{nummet}
In this paragraph we present a brief description of the numerical methods that we use in this work.

We have implemented two different numerical approaches, respectively to calculate the eigenvalues of the Dirac operator with infinite mass boundary conditions, directly from the formulation of the eigenvalue problem and to solve the minimization problem associated with the non-linear variational characterization \eqref{eqn:firstminmax}, defining $\mu^\Omega(E)$.

The eigenvalues of the Dirac operator with infinite mass boundary conditions are calculated using a numerical method based on Radial Basis Functions (RBF) (see \textit{eg}.~\cite{Kansa,Fornberg}). We have chosen a set of $RBF$ centers $y_1,...,y_N\in\mathbb{R}^2$, for some $N\in\mathbb{N}$, which are generated by a node repel algorithm (see~\cite{A19} for details). The eigenfunction $u=(u_1,u_2)^\top$ is defined in $H^1(\Omega,\mathbb{C}^2)$ and we use the notation $u_1=v_1+iw_1$ and $u_2=v_2+iw_2$, where $v_1$, $w_1$ and $v_2$, $w_2$ are the real and imaginary parts of $u_1$ and $u_2$, respectively. The RBF numerical approximation for each of these functions is defined by
\begin{equation}
\label{rbf}
\begin{array}{c}
v_1(x)=\sum_{j=1}^N\alpha^{(1)}_j\phi_j(x),\quad w_1(x)=\sum_{j=1}^N\beta^{(1)}_j\phi_j(x),\\ v_2(x)=\sum_{j=1}^N\alpha^{(2)}_j\phi_j(x),\quad w_2(x)=\sum_{j=1}^N\beta^{(2)}_j\phi_j(x),\quad
\end{array}
\end{equation}
where $\phi_j(x)=\phi(|x-y_j|)$, for some function $\phi:\mathbb{R}_0^+\rightarrow\mathbb{R}$. Several $RBF$ functions can be considered (eg.~\cite{Fornberg,A19}), but in this work we consider the multiquadric one $\phi(r)=\sqrt{1+(\epsilon r)^2}$, for some $\epsilon>0$.

The eigenvalue problem for the Dirac operator with infinite mass boundary conditions can be written as

\[ \left\{ \begin{array}{cl}
 -\frac{\partial v_2}{\partial x_2}+\frac{\partial w_2}{\partial x_1}+\rmi\left(-\frac{\partial v_2}{\partial x_1}-\frac{\partial w_2}{\partial x_2}\right) = E \left(v_1+\rmi w_1\right) & \text{ in } \Omega \\
  \frac{\partial w_1}{\partial x_1}+\frac{\partial v_1}{\partial x_2}+\rmi\left(-\frac{\partial v_1}{\partial x_1}+\frac{\partial w_1}{\partial x_2}\right) = E \left(v_2+\rmi w_2\right) & \text{ in } \Omega \\
  \left(v_2+\rmi w_2\right) = \rmi (n_1+\rmi n_2)(v_1+\rmi w_1) & \text{ on } \partial \Omega 
\end{array} \right.\]
and splitting in real and imaginary parts we have
\begin{equation}
    \label{equationsnm}
 \left\{ \begin{array}{cl}
 -\frac{\partial v_2}{\partial x_2}+\frac{\partial w_2}{\partial x_1} = E v_1 & \text{ in } \Omega \\
-\frac{\partial v_2}{\partial x_1}-\frac{\partial w_2}{\partial x_2} = E w_1 & \text{ in } \Omega \\
  
  \frac{\partial w_1}{\partial x_1}+\frac{\partial v_1}{\partial x_2} = E v_2 & \text{ in } \Omega \\
 -\frac{\partial v_1}{\partial x_1}+\frac{\partial w_1}{\partial x_2} = E w_2 & \text{ in } \Omega \\
 
  v_2 = -n_1w_1-n_2v_1 & \text{ on } \partial \Omega \\
w_2=n_1v_1-n_2w_1 & \text{ on } \partial \Omega 
\end{array} \right.
\end{equation}

These equations are imposed at a discrete set of interior and boundary points. We consider $M^{\partial\Omega}\in\mathbb{N}$ points $p_1,...,p_{M^{\partial\Omega}}$ uniformly distributed on $\partial\Omega$ and $M^{\Omega}\in\mathbb{N}$ points $q_1,...,q_{M^\Omega}$ located at a grid defined on $\Omega$. Then, we calculate the matrices
\[\mathbf{M}^\Omega=\begin{bmatrix}
\phi_1(q_1) & \cdots & \phi_N(q_1)\\
\vdots & \ddots & \vdots\\
\phi_1(q_{M^{\Omega}}) & \cdots & \phi_N(q_{M^{\Omega}})
\end{bmatrix},\quad \mathbf{M}_1^\Omega=\begin{bmatrix}
\partial_1\phi_1(q_1) & \cdots & \partial_1\phi_N(q_1)\\
\vdots & \ddots & \vdots\\
\partial_1\phi_1(q_{M^{\Omega}}) & \cdots & \partial_1\phi_N(q_{M^{\Omega}})
\end{bmatrix},\]
\[\mathbf{M}_2^\Omega=\begin{bmatrix}
\partial_2\phi_1(q_1) & \cdots & \partial_2\phi_N(q_1)\\
\vdots & \ddots & \vdots\\
\partial_2\phi_1(q_{M^{\Omega}}) & \cdots & \partial_2\phi_N(q_{M^{\Omega}})
\end{bmatrix},\quad\mathbf{M}^{\partial\Omega}=\begin{bmatrix}
\phi_1(p_1) & \cdots & \phi_N(p_1)\\
\vdots & \ddots & \vdots\\
\phi_1(p_{M^{\partial\Omega}}) & \cdots & \phi_N(p_{M^{\partial\Omega}})
\end{bmatrix}.\]
and
\[\mathbf{M}_1^{\partial\Omega}=\begin{bmatrix}
n_1(p_1)\phi_1(p_1) & \cdots & n_1(p_1)\phi_N(p_1)\\
\vdots & \ddots & \vdots\\
n_1(p_{M^{\partial\Omega}})\phi_1(p_{M^{\partial\Omega}}) & \cdots & n_1(p_{M^{\partial\Omega}})\phi_N(p_{M^{\partial\Omega}})
\end{bmatrix},\]
\[\mathbf{M}_2^{\partial\Omega}=\begin{bmatrix}
n_2(p_1)\phi_1(p_1) & \cdots & n_2(p_1)\phi_N(p_1)\\
\vdots & \ddots & \vdots\\
n_2(p_{M^{\partial\Omega}})\phi_1(p_{M^{\partial\Omega}}) & \cdots & n_2(p_{M^{\partial\Omega}})\phi_N(p_{M^{\partial\Omega}})
\end{bmatrix}\]

Taking into account the definitions of the RBF linear combinations \eqref{rbf}, the numerical approximations for the eigenvalues are the values $E$ for which we have nonzero solutions of the overdetermined system of linear equations
\begin{equation}
    \label{matric}\begin{bmatrix}\mathbf{0}\\
\mathbf{0}\\
\mathbf{0}\\
\mathbf{0}\\
\mathbf{0}\\
\mathbf{0}
\end{bmatrix}=
\left(\begin{bmatrix}
\mathbf{0} & \mathbf{0} & -\mathbf{M}_2^\Omega &\mathbf{M}_1^\Omega\\
\mathbf{0} & \mathbf{0} & -\mathbf{M}_1^\Omega &-\mathbf{M}_2^\Omega\\
\mathbf{M}_2^\Omega &\mathbf{M}_1^\Omega&\mathbf{0} & \mathbf{0} \\
-\mathbf{M}_1^\Omega &\mathbf{M}_2^\Omega&\mathbf{0} & \mathbf{0} \\
\mathbf{M}_2^{\partial\Omega}&\mathbf{M}_1^{\partial\Omega}&\mathbf{M}^{\partial\Omega}&\mathbf{0}\\
-\mathbf{M}_1^{\partial\Omega}&\mathbf{M}_2^{\partial\Omega}&\mathbf{0}&\mathbf{M}^{\partial\Omega}
\end{bmatrix}-E\begin{bmatrix}
\mathbf{M}^\Omega&\mathbf{0}&\mathbf{0}&\mathbf{0}\\
\mathbf{0}&\mathbf{M}^\Omega&\mathbf{0}&\mathbf{0}\\
\mathbf{0}&\mathbf{0}&\mathbf{M}^\Omega&\mathbf{0}\\
\mathbf{0}&\mathbf{0}&\mathbf{0}&\mathbf{M}^\Omega\\
\mathbf{0}&\mathbf{0}&\mathbf{0}&\mathbf{0}\\
\mathbf{0}&\mathbf{0}&\mathbf{0}&\mathbf{0}
\end{bmatrix}\right).\begin{bmatrix}\mathbf{\alpha}^{(1)}\\
\mathbf{\beta}^{(1)}\\
\mathbf{\alpha}^{(2)}\\
\mathbf{\beta}^{(2)}
\end{bmatrix}.
\end{equation}

The numerical solution of the minimization problem associated to the non-linear variational characterization is obtained directly from \eqref{eqn:firstminmax}, defining the function
\[\mathcal{F}(\alpha_1^{(1)},...,\alpha_N^{(1)},\beta_1^{(1)},...,\beta_N^{(1)})=\frac{4 \int_\Omega |\partial_{\bar z} u_1|^2 dx - E^2 \int_{\Omega}|u_1|^2dx + E \int_{\partial\Omega} |u_1|^2 ds}{\int_\Omega |u_1|^2 dx}\]
that we minimize by a gradient type method. We refer to~\cite{A19} for details about the numerical quadratures to approximate the boundary and volume integrals in the definition of $\mathcal{F}$.

\subsection{Numerical Results}
\label{numres}
We start by testing our numerical algorithm for the calculation of the eigenvalues of the Dirac operator with infinite mass boundary conditions in the case of the unit disk, for which we know that the principal eigenvalue $E_1(\mathbb{D})$ is the smallest non-negative solution of the equation
\[J_0(\mu)=J_1(\mu)\]
and we have $E_1(\mathbb{D})=1.434695650819...$ In Table~\ref{table:numerrors} we show the absolute errors of the numerical approximations for the principal eigenvalue $E_1(\mathbb{D})$, for several choices of $\epsilon$ and $N$ and show that the numerical method can be highly accurate, even with a moderate value of $N$.

\begin{table}[!ht]
\centering 
\begin{tabular}{|c|c|c|c|}
\hline
& N=242 & N=323 & N=402 \\
\hline$\epsilon=5$& $4.45\times10^{-7}$ & $8.55\times10^{-8}$& $1.33\times10^{-8}$\\
\hline$\epsilon=10$& $1.30\times10^{-5}$ &$2.78\times10^{-6}$ &$4.93\times10^{-8}$ \\
\hline$\epsilon=15$& $4.92\times10^{-5}$ & $9.21\times10^{-6}$& $1.16\times10^{-6}$\\
\hline
\end{tabular}
\caption{Absolute errors of the numerical approximations for the principal eigenvalue $\lambda_1(\mathbb{D})$, for several choices of $\epsilon$ and $N$.}
\label{table:numerrors}
\end{table}
We have computed the principal eigenvalue for 2500 domains (with smooth boundary) randomly generated satisfying $|\Omega|=\pi$. The corresponding eigenvalues are plotted in Figure~\ref{fig:figure2}, as a function of the perimeter. We observe that the  principal eigenvalue is minimized for the domain which also minimizes the perimeter. By the classical isoperimetric inequality it is well know that for fixed area, the perimeter is minimized by the ball. Thus, these numerical results suggest that the Faber-Krahn type inequality stated in Conjecture \ref{conj:FK} shall hold for the Dirac operator with infinite mass boundary conditions.
\begin{figure}[!ht]
\includegraphics[scale=0.6]{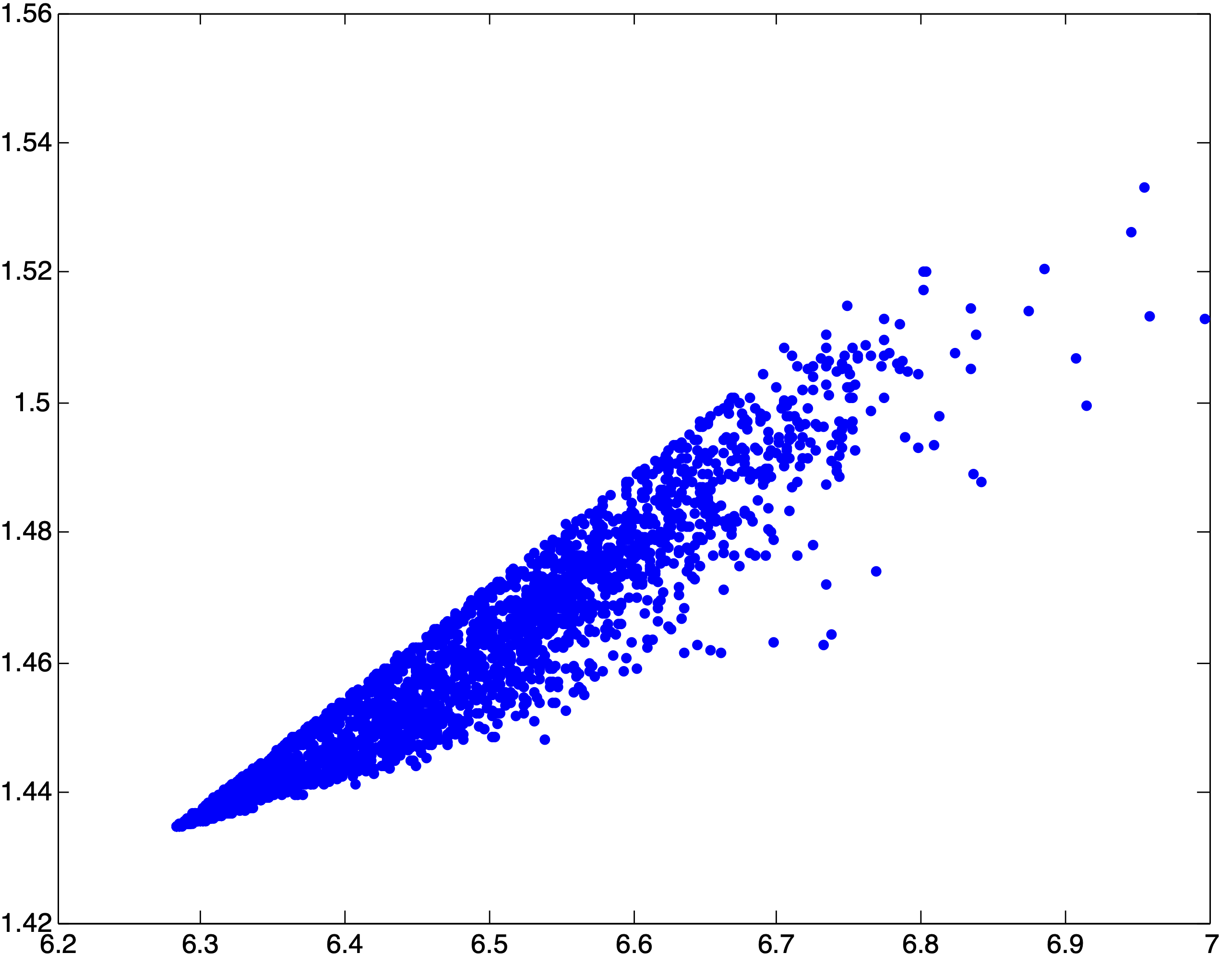}
\caption{Plot of the principal eigenvalue for 2500 domains (with smooth boundary) randomly generated satisfying $|\Omega|=\pi$, as a function of the perimeter.}
\label{fig:figure2}
\end{figure}

Next, we present some numerical results for the minimization problem associated to the non-linear variational characterization \eqref{eqn:firstminmax}. Figure~\ref{fig:figure3} shows three domains (denoted by $\Omega_1$, $\Omega_2$ and $\Omega_3$) verifying $|\Omega_i|=\pi,\ (i=1,2,3)$ to illustrate the numerical results that we gathered. In Figure~\ref{fig:figure4} we plot $\mu^{\Omega_i}(E),\ i=1,2,3$ together with the curve $\mu^{\mathbb{D}}(E)$. We verify that for all $E>0$, we have
\[
	\mu^{\Omega_i}(E) \geq \mu^\D(E),\ i=1,2,3
\] 
which illustrates Conjecture~\ref{conj:2}.

\begin{figure}[!ht]
\includegraphics[width=0.32\textwidth]{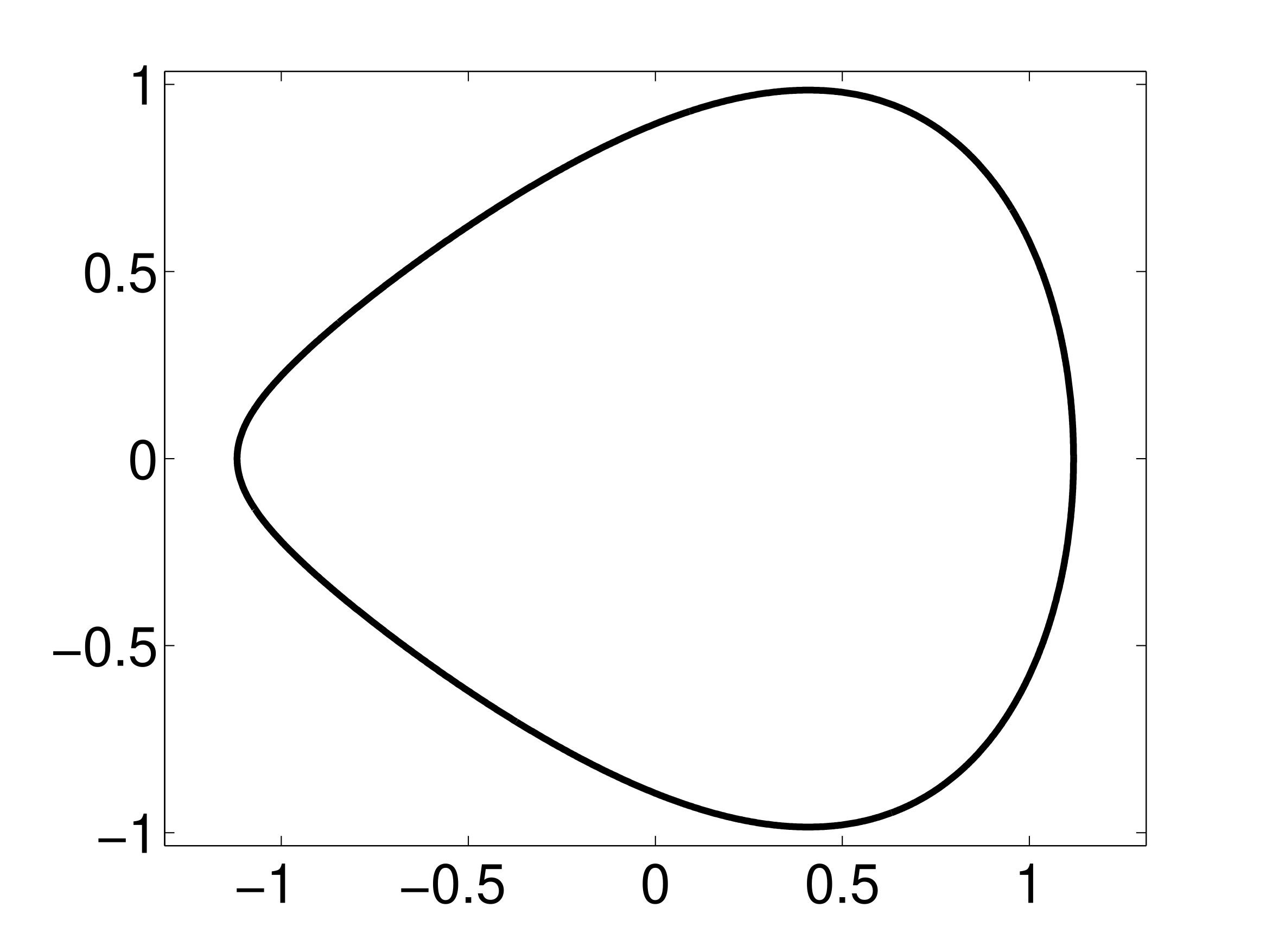}
\includegraphics[width=0.32\textwidth]{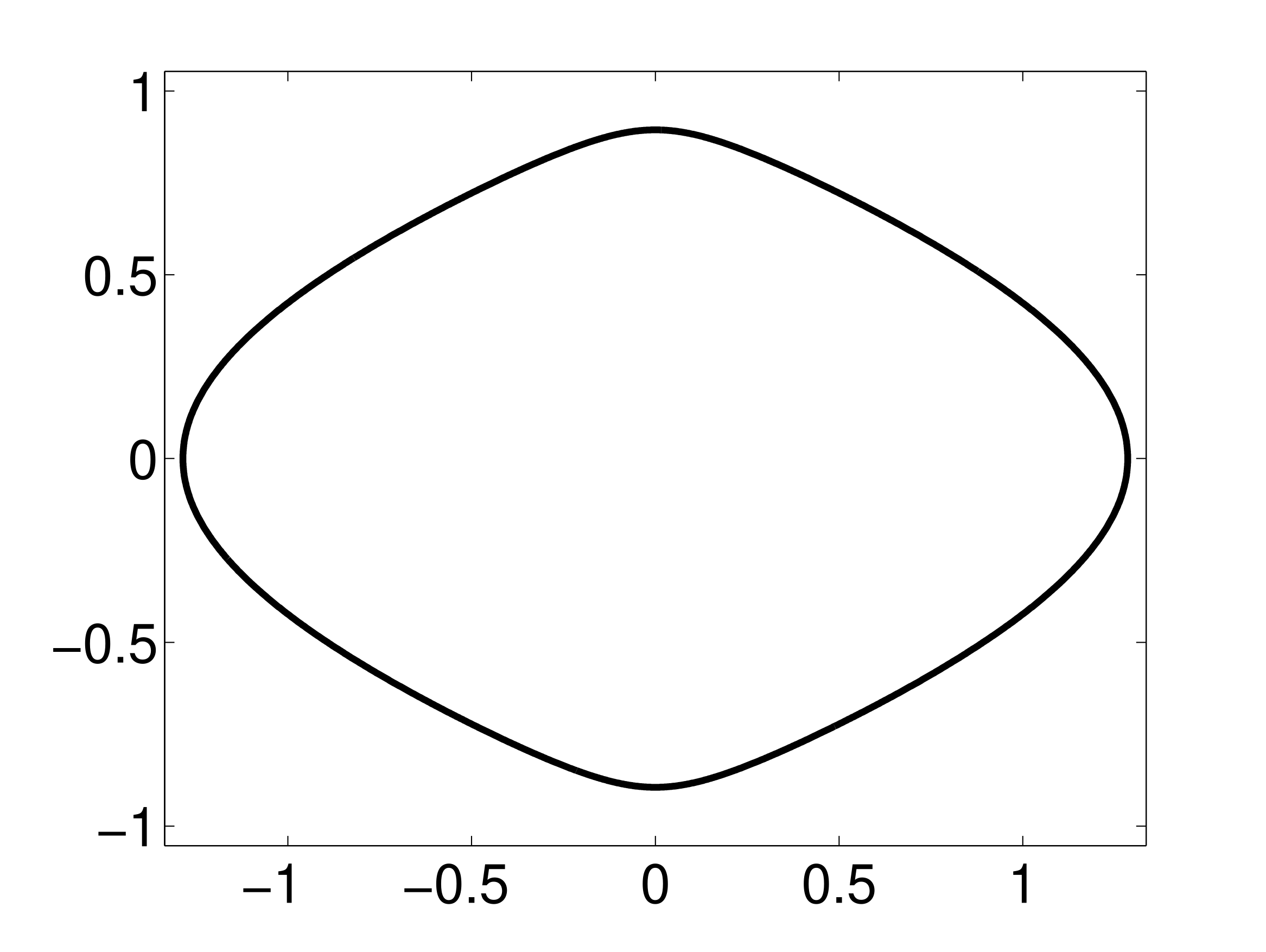}
\includegraphics[width=0.32\textwidth]{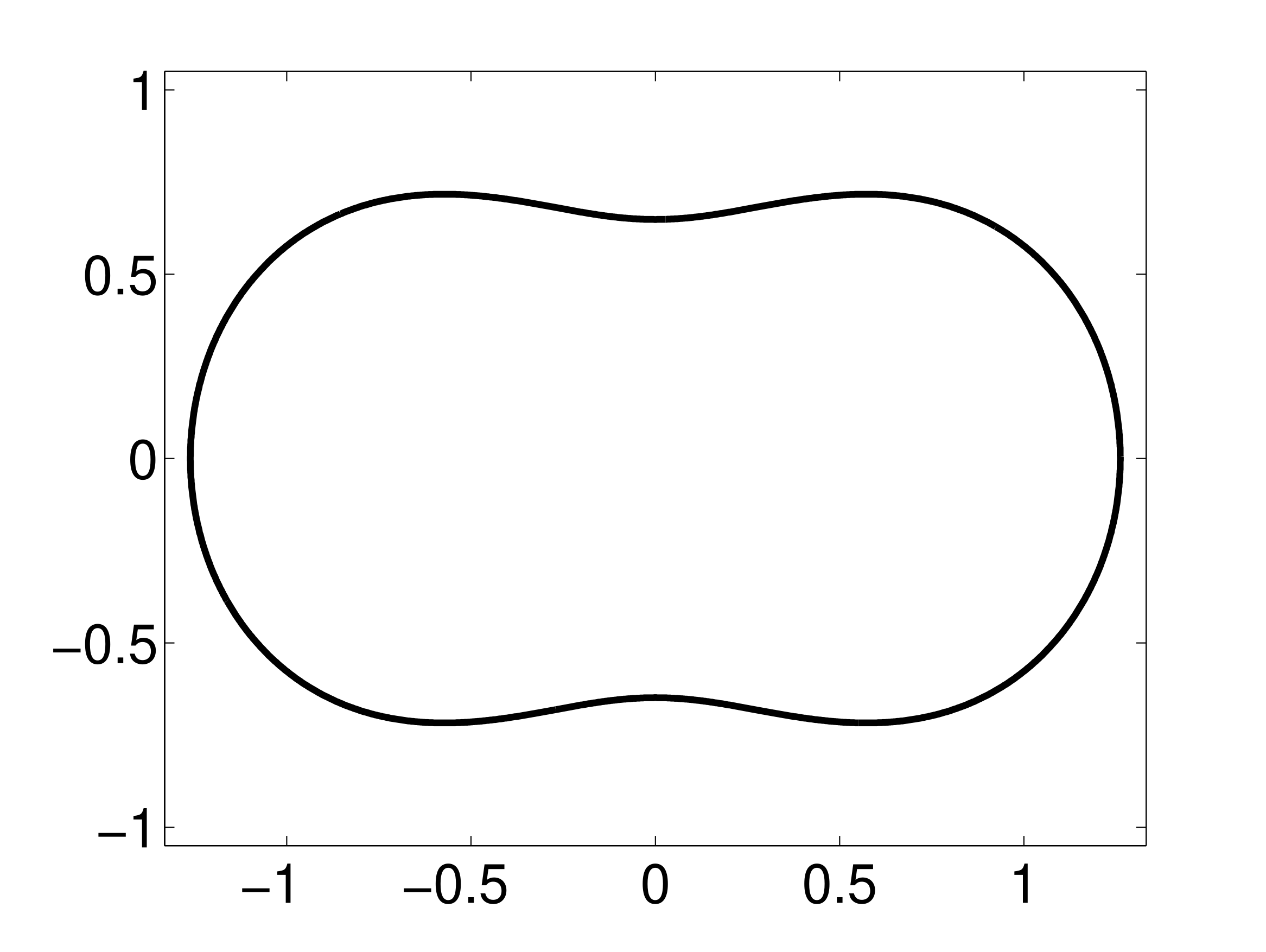}
\caption{Plots of domains $\Omega_1$, $\Omega_2$ and $\Omega_3$.}
\label{fig:figure3}
\end{figure}

\begin{figure}[!ht]
\includegraphics[width=0.9\textwidth]{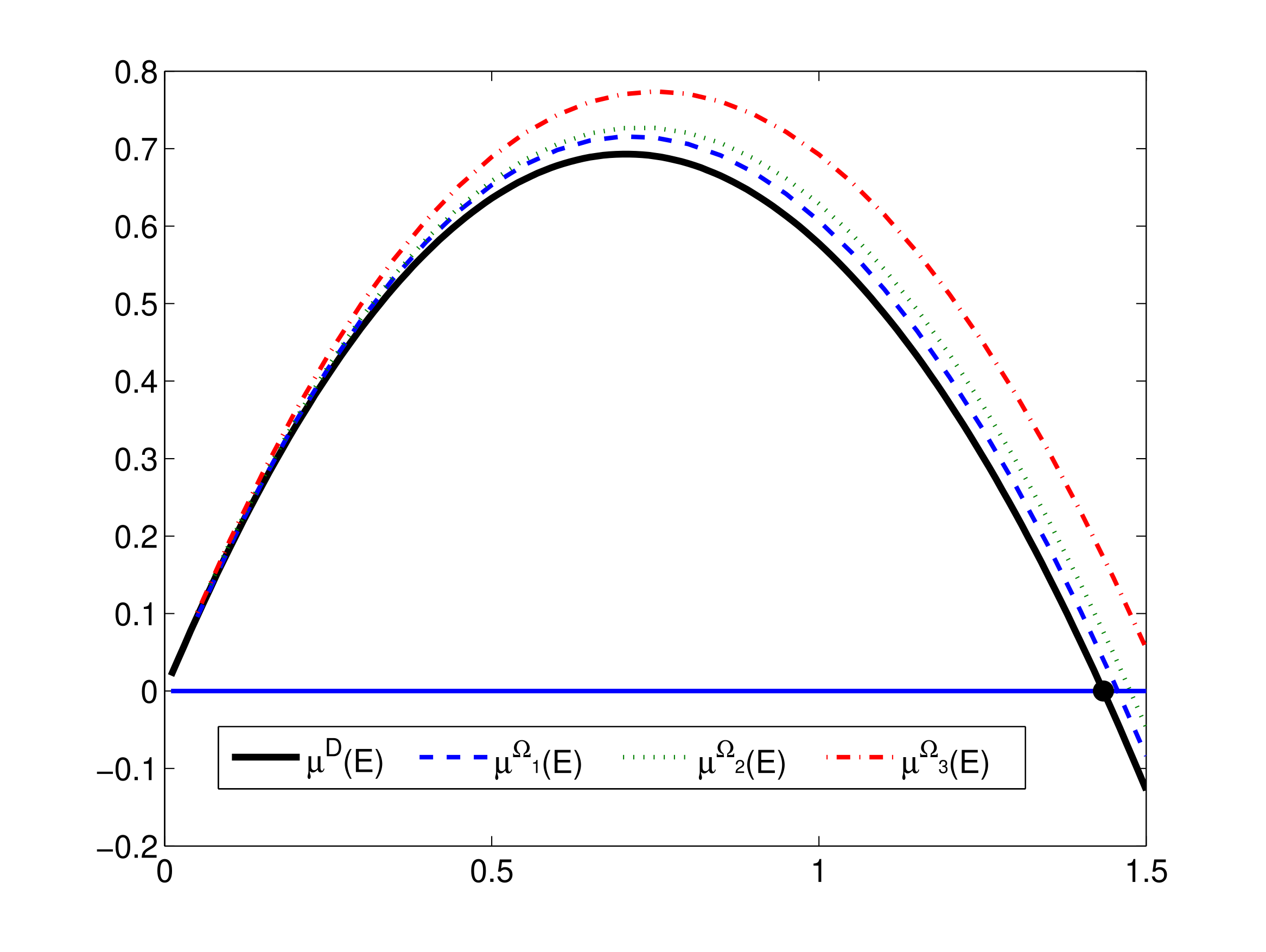}
\caption{Plots of $\mu^{\Omega_i},\ i=1,2,3$, together with the curve $\mu^{\mathbb{D}}$ as a function of the spectral parameter $E>0$.}
\label{fig:figure4}
\end{figure}

Finally, Figure~\ref{fig:figure5} shows the absolute value (left plots) and argument (right plots) of a (normalized) eigenfunction associated to the principal eigenvalue of the domains $\Omega_i,\ i=1,2,3$. Remark that the point of maximal modulus seems to be localized at the incenter of $\Omega_i$ which is in line with our choice of test function in the proof of Theorem \eqref{thm:ineq}. However, there is absolutely no reason for the associated eigenfunction to be real-valued and this has two consequences. First, Theorem \ref{thm:ineq} could be improved if one considers an adequate test function in the domain of the operator and not only in the form domain as we do. Second, Conjecture \eqref{conj:FK} can not be reduced to the Bossel-Daners inequality because, contrary to the Robin eigenvalue problem, there is \textit{a priori} no reason for an eigenfunction to have a non-constant argument as illustrated in Figure~\ref{fig:figure5}.

\begin{figure}[!ht]
\includegraphics[width=0.49\textwidth]{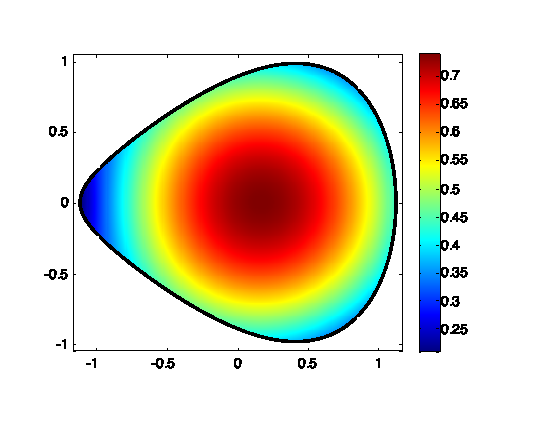}
\includegraphics[width=0.49\textwidth]{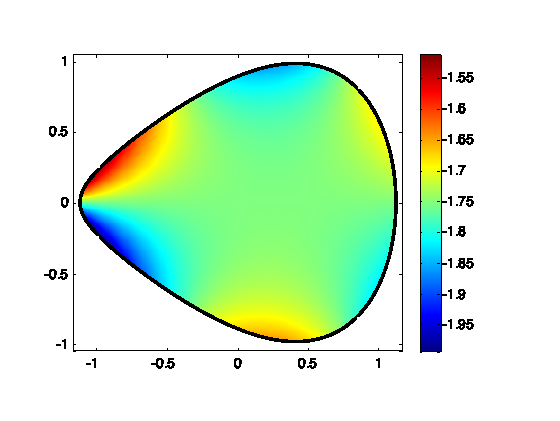}
\includegraphics[width=0.49\textwidth]{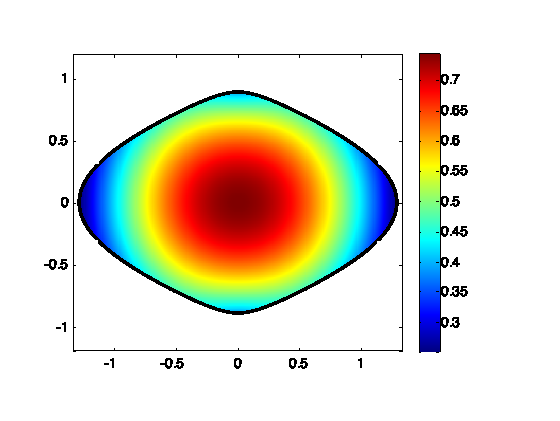}
\includegraphics[width=0.49\textwidth]{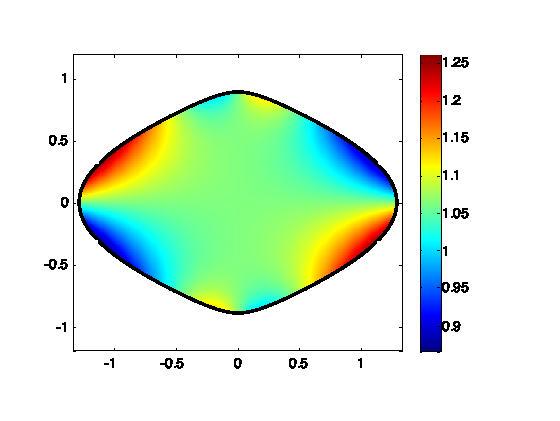}
\includegraphics[width=0.49\textwidth]{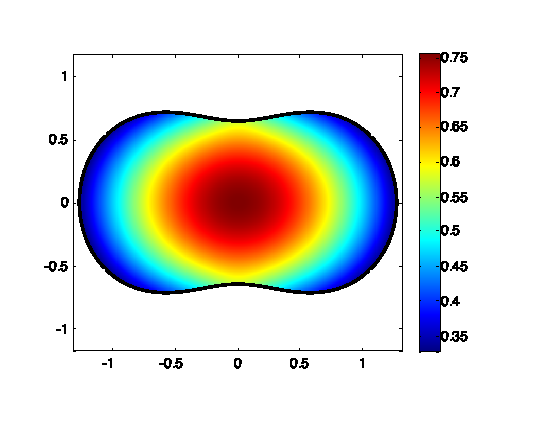}
\includegraphics[width=0.49\textwidth]{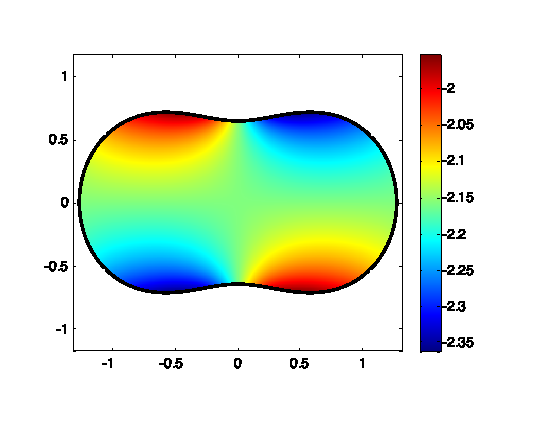}
\caption{Plots of the absolute value (left plots) and argument (right plots) of the eigenfunction associated to the principal eigenvalue of $\Omega_i,\ i=1,2,3$.}
\label{fig:figure5}
\end{figure}
\clearpage
\section*{Acknowledgments}
The work of R.~D.~Benguria has been partially supported by FONDECYT (Chile) project 116-0856.

R.~D.~Benguria, V.~Lotoreichik and T.~Ourmi\`eres-Bonafos are very grateful to the American Institute of Mathematics (AIM) for supporting their participation to the AIM workshop \emph{Shape optimization with surface interactions} in 2019, where this project was initiated.

T.~Ourmi\`eres-Bonafos thanks Nicolas Raymond for pointing out that the projectors introduced in Definition \ref{def:szeproj} are named after the famous mathematician G\'abor Szeg\"o.

\end{document}